\pdfoutput=1
\documentclass[psamsfonts]{amsart}

%-------Packages---------
\usepackage{tikz-cd}
\usepackage{amssymb,amsfonts}
\usepackage{verbatim}
\usepackage{latexsym,amsthm,graphicx}
\usepackage{amsmath,amscd}
\usepackage[all,arc]{xy}
\usepackage{enumerate}
\usepackage{mathrsfs}
\usepackage{multirow}
\usepackage{times}
\usepackage{latexsym,amssymb,amsmath,graphicx, amsthm}%,bbm}
\usepackage[margin=1in,left=1in]{geometry}
%\usepackage{natbib}

%--------Theorem Environments--------
%theoremstyle{plain} --- default
\newtheorem{thm}{Theorem}[section]
\newtheorem{cor}[thm]{Corollary}
\newtheorem{prop}[thm]{Proposition}
\newtheorem{lem}[thm]{Lemma}

\def\be{\begin{eqnarray}}
\def\ee{\end{eqnarray}}

\def\b*{\begin{eqnarray*}}
\def\e*{\end{eqnarray*}}

\newcommand {\A}{\mathbb{A}}
\newcommand {\V}{\mathbb{V}}

\newcommand{\mld}{\widehat{\textnormal{mld}}}
\newcommand{\Spec}{\textnormal{Spec}}
\newcommand{\ord}{\textnormal{ord}}
\newcommand{\Int}{\textnormal{Int}}
\newcommand{\cont}{\textnormal{Cont}}

\theoremstyle{definition}
\newtheorem{defn}[thm]{Definition}

\newtheorem{exmp}[thm]{Example}

\theoremstyle{remark}
\newtheorem{rem}[thm]{Remark}
\newtheorem{rems}[thm]{Remarks}

\newtheorem{cond}[thm]{Condition $\Delta^\alpha $}

\makeatletter
\let\c@equation\c@thm
\makeatother
\numberwithin{equation}{section}

%\bibliographystyle{plain}

%--------Meta Data: Fill in your info------
\title{Computations of Mather Minimal Log Discrepancies}

\author{Weichen Gu}

\date{July 9, 2017}

\begin{document}
\bibliographystyle{alpha}

\maketitle
\begin{abstract}
We compute the Mather minimal log discrepancy via jet schemes and arc spaces for toric varieties and very general hypersurfaces.
\end{abstract}

%\tableofcontents
%%%%%%%%%%%%%%%%%%%%%%%%%%%%%%%%%%%%%%%%%%

\section{Introduction}
\label{intro}

The minimal log discrepancy is an important invariant in algebraic geometry. It is well known that certain conjectures on the minimal log discrepancy imply the termination of the Minimal Model Program (see \cite{Sh04}). However, not much about minimal log discrepancy is known compared to other invariants defined in similar settings such as the log canonical threshold. Recently, the notion of \emph{Mather minimal log discrepancy} was introduced by Ishii in \cite{Ish11}. It is closely related to the minimal log discrepancy and they share many similar properties. But the Mather minimal log discrepancy is defined more generally for an arbitrary variety. This paper is concerned with the computation of Mather minimal log discrepancy in the context of toric varieties and very general hypersurfaces.

Let us start by recalling the definition of the minimal log discrepancy. Let $X$ be a normal $\mathbb{Q}$-Gorenstein variety over an algebraically closed field $k$ of characteristic zero and let $f: Y\rightarrow X$ be a birational morphism with $Y$ normal. For a divisor $E$ on $Y$ over $X$ and an ideal $\mathfrak{a}$ in $\mathcal{O}_X$, the \emph{log discrepancy} of the pair $(X,\mathfrak{a})$ with respect to $E$ is defined as
\begin{equation*}
a(E;X,\mathfrak{a}):=\ord_E(K_{Y/ X})-\ord_E(\mathfrak{a})+1
\end{equation*}
For each closed subset $W$ in $X$, the minimal log discrepancy of $(X,\mathfrak{a})$ with respect to $W$ is
\begin{equation*}
\textnormal{mld}(W;X,\mathfrak{a}):=\min\{a(E;X,\mathfrak{a}) | c_X(E)\subset W\},
\end{equation*}
where $c_X(E)$ is the center of $E$ on $X$. An introduction to minimal log discrepancies can be found in \cite{Am06}.

Now let $X$ be an arbitrary variety over an algebraically closed field $k$ of characteristic zero. Let $f:Y\rightarrow X$ be a resolution of singularities so that $Y$ is a sufficiently "high" birational model over $X$ (will be made clear in Section \ref{sec3}). The Mather minimal log discrepancy is defined in a similar way to the usual minimal log discrepancy (by simply replacing the relative canonical divisor with the \emph{Mather discrepancy divisor}) but it is much easier to describe in terms of jet schemes and arc spaces. The Mather minimal log discrepancy for a closed point $x$ of a variety $X$ is denoted by $\mld(x;X)$. Of the many nice properties of the Mather minimal log discrepancy, one of the most important is Inversion of Adjunction (\cite[Theorem 4.10]{dFD11} and \cite[Proposition 3.10]{Ish11}).

When both Mather and usual minimal log discrepancies are defined, the two differ by the pull back of a certain ideal sheaf (\cite[2.2]{Ish11}). In particular, Mather minimal log discrepancy is always larger than or equal to the usual minimal log discrepancy. Their relation has been further studied in \cite{Ish13}, \cite{EI13} and \cite{dFT16}. We note that contrary to usual minimal log discrepancies, the variety has "good" singularities when Mather minimal log discrepancies are small (see \cite[Theorem 4.7]{Ish11} for a more precise description).

In Section \ref{sec2} we give an overview of jet schemes and arc spaces. We start with the definition of jet schemes, and apply the definition to describing jet schemes of an affine variety over a field. It shows that the jet schemes of an affine variety $X$ are also affine and we get explicit defining equations for $X_m$. This explicit description will be important for our analysis in Section \ref{sec5}. Next we review arc spaces and cylinders (especially contact loci). The arc space of a variety $X$, denoted by $X_\infty$, is the projective limit of the projective system $\{X_m\}_{0\leq m<\infty}$ of jet schemes, and cylinders are inverse images of constructible subsets of $X_m$ in $X_\infty$.

In Section \ref{sec3} we review the basics about Mather minimal log discrepancy. We start by defining the notion of Mather discrepancy divisor through Nash blow-ups. Then we recall the following result connecting the Mather minimal log discrepancy to jet schemes:

\begin{prop}
\label{fundamental_prop1}
(\cite[Lemma 4.2]{Ish11})
Let $X$ be a variety over an algebraically closed field $k$ of characteristic $0$. If $x$ is a closed point of $X$, then we have
\begin{equation*}
\mld(x;X)=\lim_{m\rightarrow \infty}((m+1)\dim(X)-\dim (\psi_m(\pi^{-1}(x)))),
\end{equation*}
where $\psi_m: X_\infty\rightarrow X_m$ and $\pi: X_\infty\rightarrow X$ are canonical truncation maps.
\end{prop}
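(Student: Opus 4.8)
The plan is to reduce to the smooth situation via a resolution and then apply a change of variables for arc spaces. First I would assume $X=\Spec A$ is affine, of dimension $d:=\dim X$, and fix a resolution $f\colon Y\to X$ which is ``high enough'' in the sense of Section~\ref{sec3}; enlarging $Y$ if needed I would moreover arrange that $f$ is an isomorphism over the smooth locus $X_{\mathrm{sm}}$, that $f^{-1}(\mathrm{Sing}\,X)$ together with the support of the Mather discrepancy divisor is a simple normal crossing divisor $\sum_{i\in I}E_i$, and that $\widehat{K}_{Y/X}=\sum_{i\in I}\widehat{k}_i E_i$ with all $\widehat{k}_i\ge 0$. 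For $J\subseteq I$ write $E_J^{\circ}=\bigl(\bigcap_{i\in J}E_i\bigr)\setminus\bigcup_{i\notin J}E_i$. Since $f$ is proper and birational, every arc of $X$ whose generic point meets $X_{\mathrm{sm}}$ lifts uniquely along $f_\infty\colon Y_\infty\to X_\infty$ to an arc of $Y$ with base point in $f^{-1}(x)$, and $f_\infty$ sends arcs of $Y$ based on $f^{-1}(x)$ to arcs of $X$ based at $x$. Hence, writing $W_m\subseteq Y_m$ for the set of $m$-jets of $Y$ with base point in $f^{-1}(x)$, one has $f_m(W_m)\subseteq\psi_m(\pi^{-1}(x))$, and the two sets differ only by the image of arcs contained in $(\mathrm{Sing}\,X)_\infty$. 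Since $\widehat{K}_{Y/X}$ is effective, $\mld(x;X)$ is finite (indeed $\ge 1$), and I would dispose of this difference by stratifying $\mathrm{Sing}\,X$ into smooth locally closed pieces and inducting on dimension, which bounds its image in $X_m$ by a linear function of $m$ of slope $\le d-1$ and so makes it negligible in the limit. This reduces the problem to computing $\dim f_m(W_m)$.

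The heart of the argument is to carry out this computation on the \emph{smooth} variety $Y$ and transport it. I would partition $W_m$ by the vanishing orders $\mathbf{e}=(e_i)_{i\in I}$ of a jet along the $E_i$, with $J=\{i:e_i>0\}$. As $Y$ is smooth and $\sum_i E_i$ is simple normal crossing, a local-coordinate count in the $\A^{md}$-bundle $Y_m\to Y$ gives
\[
\dim W_m^{\mathbf{e}}=\dim\!\bigl(f^{-1}(x)\cap E_J^{\circ}\bigr)+md-\sum_{i\in J}(e_i-1),
\]
and along $W_m^{\mathbf{e}}$ the pulled-back Mather--Jacobian ideal $\mathcal{O}_Y(-\widehat{K}_{Y/X})$ has order exactly $\sum_i\widehat{k}_i e_i$. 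The key step is then the change of variables formula for the Mather--Jacobian ideal along $f_\infty$ (in the tradition of Kontsevich, Denef--Loeser and Ein--Musta\c{t}\u{a}--Yasuda, in the form valid over an arbitrary base $X$ due to Ishii): for $m$ large relative to $\sum_i e_i$, one gets $\dim f_m(W_m^{\mathbf{e}})=\dim W_m^{\mathbf{e}}-\sum_i\widehat{k}_i e_i$. Since this expression strictly decreases in each $e_i$ (here $\widehat{k}_i\ge 0$ enters), its maximum over $\mathbf{e}$ is attained at a bounded $\mathbf{e}$, so $m\gg0$ suffices and, for such $m$,
\[
(m+1)d-\dim\psi_m(\pi^{-1}(x))=\min_{J,\,\mathbf{e}}\Bigl[\,d-\dim\!\bigl(f^{-1}(x)\cap E_J^{\circ}\bigr)-|J|+\sum_{i\in J}(1+\widehat{k}_i)e_i\,\Bigr],
\]
a value that does not depend on $m$; in particular the limit in the statement exists and equals this minimum, which is attained with all $e_i=1$.

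It would then remain to identify this minimum with $\mld(x;X)$. For ``$\le$'': given any divisor $E$ over $X$ with $c_X(E)=\{x\}$, realize it on a smooth model $f'\colon Y'\to X$ (so $E\subseteq(f')^{-1}(x)$); the $m$-jets of $Y'$ based at a general point of $E$ and meeting $E$ transversally form a subset of $Y'_m$ of dimension $(d-1)+md$ whose image under $f'_m$, again by the change of variables, has dimension $(d-1)+md-\ord_E(\widehat{K}_{Y'/X})=(m+1)d-\widehat{a}(E;X)$ and lies inside $\psi_m(\pi^{-1}(x))$; hence $(m+1)d-\dim\psi_m(\pi^{-1}(x))\le\widehat{a}(E;X)$, and the infimum over $E$ yields $\lim_m(\cdots)\le\mld(x;X)$. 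For ``$\ge$'': with $(J,\mathbf{1})$ the minimizing pair, pick a top-dimensional component $V$ of $\overline{f^{-1}(x)\cap E_J^{\circ}}$; then $V\subseteq f^{-1}(x)$, $V\subseteq E_i$ for $i\in J$, and $V\not\subseteq E_j$ for $j\notin J$. The divisorial valuation $w$ measuring the order along $V$ (the order valuation of the regular local ring $\mathcal{O}_{Y,\eta_V}$) has $c_X(w)=\{x\}$, $w(E_i)=1$ for $i\in J$, $w(E_j)=0$ for $j\notin J$, and log discrepancy $\mathrm{codim}(V,Y)$ over the smooth variety $Y$; so, by additivity of Mather discrepancies over the smooth intermediate model $Y$, one has $\widehat{a}(w;X)=\mathrm{codim}(V,Y)+\sum_{i\in J}\widehat{k}_i$, which is precisely the bracketed term. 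Thus $\mld(x;X)\le\widehat{a}(w;X)=\lim_m(\cdots)$, and combining the two inequalities gives the statement.

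I expect the real work to be concentrated in the Mather--Jacobian change of variables used twice above: because $f_m\colon Y_m\to X_m$ is far from flat, the dimensions of its fibres along the strata $W_m^{\mathbf{e}}$ must be pinned down, and it is here that one must exploit the smoothness of $Y$ (so that $Y_m$ is an affine bundle over $Y$), the bijectivity of $f_\infty$ over arcs meeting $X_{\mathrm{sm}}$, and the computed order of $\mathcal{O}_Y(-\widehat{K}_{Y/X})$ along $W_m^{\mathbf{e}}$. The estimate controlling arcs inside $(\mathrm{Sing}\,X)_\infty$ is a secondary but genuine technical point, handled by the induction on $\dim X$ mentioned above.
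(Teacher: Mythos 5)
The paper does not prove this statement: it is quoted directly from \cite[Lemma 4.2]{Ish11} (see also Proposition \ref{fundamental_prop}), so there is no internal argument to compare against. Your proposal is, in substance, a correct reconstruction of the proof in the cited sources: stratify jets on a resolution $f\colon Y\to X$ factoring through the Nash blow-up by contact orders along the exceptional divisors, and apply the de Fernex--Ein--Ishii fibration (``change of variables'') theorem, which says that over the image in $X_m$ the fibres of $f_m$ along a stratum with $\ord_\gamma(\widehat{K}_{Y/X})=q$ have dimension exactly $q$ for $m\gg q$; the two inequalities are then exactly as you describe. Three small points to tighten: in the ``$\le$'' direction the auxiliary smooth model carrying $E$ must also factor through the Nash blow-up (or you pass to a higher model and use the additivity $\widehat{K}_{Y'/X}=K_{Y'/Y}+g^*\widehat{K}_{Y/X}$, exactly as you do in the ``$\ge$'' direction) so that $\widehat{K}_{Y'/X}$ and the fibration theorem are available; the contribution of arcs lying in $(\mathrm{Sing}\,X)_\infty$ is disposed of directly by Lemma \ref{lem_4.3}, with no need for the stratify-and-induct argument you sketch; and for the strata with large contact orders, where $m$ is not yet large relative to $\sum_i\widehat{k}_ie_i$, you should invoke the trivial bound $\dim f_m(W_m^{\mathbf{e}})\le\dim W_m^{\mathbf{e}}$, which already makes them irrelevant to the maximum, rather than the change-of-variables formula itself.
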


The key point for the examples considered in Section \ref{sec4} and Section \ref{sec5}, is to compute/bound $\dim(\psi_m(\pi^{-1}(x)))$ for $m$ large enough.

Section \ref{sec4} is devoted to the study of the Mather minimal log discrepancy for a toric variety at a closed point $x$. The question is local so we assume $X=X(\sigma)$ is the affine toric variety associated to the cone $\sigma\subset N_\mathbb{R}:= N\otimes_\mathbb{Z}\mathbb{R}$, where $N\cong\mathbb{Z}^n$ is the lattice of $\sigma$. We further assume that $\sigma$ spans $N_\mathbb{R}$. First, we consider the case when $x$ is a torus-invariant point. By Proposition \ref{fundamental_prop1}, the key is to compute $\dim(\psi_m(\pi^{-1}(x)))$ for $m$ large enough. The space $\psi_m(\pi^{-1}(x))$ is decomposed into $T_m$-orbits, where $T_m$ is the $m^{\textnormal{th}}$ jet scheme of the torus $T$ in $X$ which naturally acts on $X_m$. We use the fact that those orbits correspond to lattice points in the interior of $\sigma$. This characterization of orbits follows from the work of Ishii (\cite{Ish03}). The problem thus comes down to finding the dimension of each $T_m$-orbit, which is in turn done by computing the dimension of its stabilizer.

In order to state our result, we introduce some notation. Let $n$ be the dimension of $X$ and $M=N^\vee$ be the dual lattice. We define the dual space $M_\mathbb{R}:=M\otimes_\mathbb{Z} \mathbb{R}$ and the dual cone $\sigma^\vee:=\{u\in M_\mathbb{R} | \langle u,v\rangle\geq 0\textnormal{ for all }v\in \sigma\}$. With this notation, we show the dimension of the $T_m$-orbit associated to a lattice point $a$ in the interior of $\sigma$ is equal to
\begin{equation*}
(m+1)n- \min \big\{\sum_{i=1}^n \langle a,u_{i} \rangle | u_{1},\ldots,u_{n}\ \textnormal{span}\  M_\mathbb{R},\textnormal{ with }u_i\in M\cap \sigma^\vee\textnormal{ for each }i\big\},
\end{equation*}
where the minimum is run over all linearly independent sets of vectors $\{u_1,\ldots,u_n\}$ in $M\cap \sigma^\vee$. Now we just need to let the point $a$ vary and take the maximum. Hence we get the following theorem:

\begin{thm}
Let $X$ be an affine toric variety associated to a cone $\sigma$ of dimension $n$ over an algebraically closed field $k$ of characteristic zero. Let $N$ be the lattice of $\sigma$ and $M$ be the dual lattice. If $\sigma$ spans $N_\mathbb{R}$ and $x$ is the torus-invariant point, then we have
\begin{equation*}
\mld (x;X)=\min_{a\in \Int(\sigma)\cap N}\Big\{ \min \big\{\sum_{i=1}^n \langle a,u_{i} \rangle | u_{1},\ldots,u_{n}\ \textnormal{span}\  M_\mathbb{R},\ u_i\in M\cap \sigma^\vee\textnormal{ for each }i\big\}\Big\},
\end{equation*}
where the second minimum is taken over all linearly independent sets of vectors $\{u_1,\ldots,u_n\}$ in $M\cap \sigma^\vee$.
\end{thm}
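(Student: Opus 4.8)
The proof will combine Proposition~\ref{fundamental_prop1} with the $T_m$-orbit decomposition of $\psi_m(\pi^{-1}(x))$, the bulk of the work being a stabilizer computation followed by a greedy-basis identity. Write $\mu$ for the right-hand side of the asserted formula; by Proposition~\ref{fundamental_prop1} it is enough to show that $(m+1)n-\dim\psi_m(\pi^{-1}(x))=\mu$ for all $m\gg 0$. Since $X=X(\sigma)=\Spec k[\sigma^\vee\cap M]$ and $x$ is cut out by the monomials $\chi^u$ with $u\in(\sigma^\vee\cap M)\setminus\{0\}$, the cylinder $\pi^{-1}(x)=\psi_0^{-1}(x)$ consists of the arcs $\gamma$ with $\ord_t\gamma(\chi^u)\geq 1$ for all such $u$. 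Following Ishii's description of the toric arc space (\cite{Ish03}), the arcs in $\pi^{-1}(x)$ not contained in a torus-invariant divisor are stratified by their order vector: for $a\in\Int(\sigma)\cap N$ let $C_a$ be the set of arcs $\gamma$ with $\ord_t\gamma(\chi^u)=\langle a,u\rangle$ for every $u\in\sigma^\vee\cap M$ (nonempty, as witnessed by the monomial arc $\chi^u\mapsto t^{\langle a,u\rangle}$). Each $C_a$ is a single orbit of the arc group of $T$, hence $\psi_m(C_a)$ is a single $T_m$-orbit $O_a^{(m)}\subseteq X_m$; the thin arcs span a subset of strictly smaller dimension, so $\dim\psi_m(\pi^{-1}(x))=\max_a\dim O_a^{(m)}$.

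Next, orbit--stabilizer reduces matters to stabilizers: as $\dim T_m=(m+1)n$, we have $\dim O_a^{(m)}=(m+1)n-\dim\textnormal{Stab}_{T_m}(\gamma_m)$ for any $\gamma_m\in O_a^{(m)}$. Set $R_m=k[t]/(t^{m+1})$ and identify $T_m=\textnormal{Hom}(M,R_m^\times)$. An element $s$ fixes the generic jet of type $a$ precisely when $s(u)\equiv 1\pmod{t^{\,m+1-\langle a,u\rangle}}$ for every $u\in\sigma^\vee\cap M$ with $\langle a,u\rangle\leq m$ (it suffices to impose this on a finite generating set of the semigroup), and for $m$ large this forces the torus part of $s$ to be trivial, so $s\in\textnormal{Hom}(M,1+tR_m)$. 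Passing to logarithms (legitimate since $1+tR_m$ is torsion-free in characteristic zero and $\sigma^\vee\cap M$ is saturated) and writing $\log s=\sum_{j=1}^m\varphi_j t^j$ with $\varphi_j\in N\otimes_{\mathbb Z}k\cong\textnormal{Hom}(M,k)$, these conditions become the linear equations $\varphi_j(u)=0$ whenever $\langle a,u\rangle\leq m-j$. Hence, setting $d_\ell(a):=\dim\mathrm{span}\{u\in\sigma^\vee\cap M:\langle a,u\rangle\leq\ell\}$,
\begin{equation*}
\dim\textnormal{Stab}_{T_m}(\gamma_m)=\sum_{j=1}^m\bigl(n-d_{m-j}(a)\bigr)=\sum_{\ell=0}^{m-1}\bigl(n-d_\ell(a)\bigr).
\end{equation*}
Because $X$ has a torus-invariant point, $\sigma$ is strongly convex and full-dimensional, so $\sigma^\vee$ spans $M_{\mathbb R}$ and $d_\ell(a)=n$ once $\ell\geq c(a):=\min_{\{u_i\}}\max_i\langle a,u_i\rangle$, the minimum taken over linearly independent $u_1,\dots,u_n\in\sigma^\vee\cap M$; thus the sum stabilizes for $m\geq c(a)$, which explains the convergence of the limit in Proposition~\ref{fundamental_prop1}.

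It remains to identify $\sum_{\ell=0}^{c(a)-1}(n-d_\ell(a))$ with $\mu_a:=\min\{\sum_{i=1}^n\langle a,u_i\rangle\}$, the inner minimum in the theorem. This is the optimality of the greedy algorithm for the linear matroid on $\sigma^\vee\cap M$ (rank $n$) with the nonnegative integral weight $u\mapsto\langle a,u\rangle$: a minimum-weight basis has weight $\sum_{\ell\geq0}\ell\,(d_\ell(a)-d_{\ell-1}(a))$, and Abel summation (using $d_\ell(a)=n$ for $\ell\geq c(a)$) rewrites this as $\sum_{\ell=0}^{c(a)-1}(n-d_\ell(a))$. Consequently $(m+1)n-\dim O_a^{(m)}=\dim\textnormal{Stab}_{T_m}(\gamma_m)$ equals $\mu_a$ once $m\geq c(a)$ and is a partial sum of it, hence $\leq\mu_a$, for all $m$. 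Now fix $m\geq\mu$. If $a$ realizes the overall minimum $\mu_a=\mu$ then $c(a)\leq\mu_a=\mu\leq m$, so $\dim\textnormal{Stab}_{T_m}(\gamma_m)=\mu$; and for every $a$ we have $\dim\textnormal{Stab}_{T_m}(\gamma_m)\geq\mu$, since either $m\geq c(a)$, whence it equals $\mu_a\geq\mu$, or $m<c(a)$, in which case each summand $n-d_\ell(a)$ with $\ell\leq m-1$ is at least $1$ and the sum is $\geq m\geq\mu$. Therefore $(m+1)n-\dim\psi_m(\pi^{-1}(x))=\mu$ for all $m\geq\mu$, and Proposition~\ref{fundamental_prop1} yields $\mld(x;X)=\mu$.

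I expect the main obstacle to be the middle step: setting up the $T_m$-action and the identification $T_m=\textnormal{Hom}(M,R_m^\times)$ correctly, checking that the stabilizer conditions reduce to the stated linear equations after taking logarithms, and keeping the estimates uniform in $m$ and in the a priori infinitely many lattice points $a$. The input from \cite{Ish03} that the $C_a$ are single orbits indexed by $\Int(\sigma)\cap N$, and the verification that the thin arcs do not affect the dimension, also require care; by contrast the greedy/Abel-summation identity is routine once the stabilizer dimension is known.
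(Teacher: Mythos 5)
Your overall strategy is the same as the paper's: decompose $\psi_m(\pi^{-1}(x))$ into $T_m$-orbits indexed by lattice points of $\Int(\sigma)$, reduce the orbit dimension to a stabilizer dimension, and identify that stabilizer dimension with the inner minimum $\Phi(a)$ of Definition \ref{defn_ph}. Where you genuinely diverge is in the stabilizer computation: the paper (Theorem \ref{thm_stablizer}) squeezes $\dim H_{m,a}$ between an upper bound obtained from a finite-fiber projection (Lemma \ref{lem_finiteness}) and a lower bound obtained by lifting from the sublattice torus, and it needs the exchange argument of Lemma \ref{lem_ph} to see that the greedy basis computes $\Phi(a)$; you instead linearize the congruence conditions by taking logarithms in $1+tR_m$ (legitimate in characteristic zero) and get the exact formula $\dim(H_{m,a}\cap\ker(T_m\to T))=\sum_{\ell=0}^{m-1}(n-d_\ell(a))$, after which the identification with $\Phi(a)$ is the standard minimum-weight-basis fact for the linear matroid on $\sigma^\vee\cap M$. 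This is cleaner and gives both the stable value and the uniform lower bound $\dim H_{m,a}\geq\min\{m,\Phi(a)\}$ in one stroke; your case analysis for $m\geq\mu$ is correct, including $c(a)\leq\Phi(a)$ and the bound $n-d_\ell(a)\geq 1$ for $\ell<c(a)$. Two small imprecisions are harmless: the congruences only force the $T$-component of a stabilizing element to be torsion (trivial on a finite-index subgroup of $M$), not trivial, which does not affect dimensions but does mean your ``equals the partial sum'' phrasing, and hence the aside that $\dim\mathrm{Stab}\leq\mu_a$ for all $m$, is not justified as stated (it is also not needed for your argument).

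The genuine gap is the step $\dim\psi_m(\pi^{-1}(x))=\max_a\dim O_a^{(m)}$. Discarding the thin arcs is fine (Lemma \ref{lem_4.3} gives $\dim\psi_m((D_i)_\infty)\leq(m+1)(n-1)$, which is $<(m+1)n-\mu$ once $m\geq\mu$), but what remains is an a priori infinite union of orbits indexed by $\Int(\sigma)\cap N$, and a uniform bound on the dimension of each member of an infinite family of locally closed sets does not bound the dimension of their union (the plane is a union of lines; since $k$ may be countable, e.g.\ $\overline{\mathbb{Q}}$, no uncountability argument rescues this). So your upper bound $\dim\psi_m(\pi^{-1}(x))\leq(m+1)n-\mu$ is not proved as written. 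The paper addresses exactly this point with Lemma \ref{lem_compactness} and Corollary \ref{cor_finiteness}, showing there are only finitely many distinct $T_m$-orbits. The repair is not hard in your framework: by Lemma \ref{lem_orbits} two jets lie in the same orbit iff they have the same truncated order map $M\cap\sigma^\vee\to S_m$, and such a map is determined by its values on the finite generating set of the semigroup, with values in the finite set $S_m$; hence only finitely many orbits occur and the maximum is legitimate. You should state and use such a finiteness argument (or invoke the paper's Corollary \ref{cor_finiteness}) before concluding via Proposition \ref{fundamental_prop1}.
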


We use the theorem to compute $\mld(x;X)$ in some examples. For example, we show that if $X$ is a toric surface, then $\mld (x;X)=\dim(X)$ (which is $2$). In higher dimension, the same conclusion holds if the torus-fixed point $x$ is an isolated singularity point and $X$ is simplicial. We also give some examples where $\mld(x;X)\neq \dim(X)$.

We conclude Section \ref{sec4} by considering an arbitrary closed point on a toric variety $X$. Recall that the set of closed points of $X$ is a disjoint union of $T$-orbits associated to faces of the cone $\sigma$. Each orbit is generated by a distinguished point associated to the corresponding face. Therefore, the problem reduces to computing the Mather minimal log discrepancy at these distinguished points, and it is further reduced to the case of a torus-invariant point in the following sense:

\begin{thm}
Let $X=X(\sigma)$ be an affine toric variety of dimension $n$ over an algebraically closed field $k$ of characteristic zero. Let $\tau$ be a face of $\sigma$ of dimension $k<n$ and $x_\tau$ be the distinguished point associated to $\tau$. If $Y$ is the $k$-dimensional affine toric variety associated to the cone $\tau$ and $y$ is the torus-invariant point of $Y$, then we have
\begin{equation*}
\mld (x_\tau;X)-n=\mld (y;Y)-k.
\end{equation*}
\end{thm}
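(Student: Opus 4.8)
The plan is to relate the two computations through Proposition~\ref{fundamental_prop1}, using the description of $\psi_m(\pi^{-1}(x_\tau))$ in terms of $T_m$-orbits that was set up for the torus-invariant case. The starting observation is geometric: the distinguished point $x_\tau$ lies in the torus orbit $O(\tau)$, and étale-locally (or, torus-equivariantly, after the standard splitting $N = N_\tau \oplus N'$ where $N_\tau$ is the sublattice spanned by $\tau$) the affine toric variety $X(\sigma)$ looks like $Y \times (\text{a lower-dimensional toric piece containing the big torus as a factor})$. Concretely, one writes $\sigma \subset N_\mathbb{R}$ and factors out the linear span of $\tau$: the pair $(X(\sigma), x_\tau)$ is formally/étale isomorphic to the pair $(Y \times \mathbb{G}_m^{\,?} \times \cdots, (y, \ast))$ in a way that respects the torus action. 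The key structural input is that $\mld$ at a closed point is an étale-local (indeed formal-local) invariant, so I may replace $X$ near $x_\tau$ by this product.

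The second step is a product/dimension count for jet schemes. Jet schemes commute with products: $(Z_1 \times Z_2)_m = (Z_1)_m \times (Z_2)_m$, and similarly $\psi_m$ and $\pi$ split. For the smooth factor of relative dimension $n-k$ that accompanies $Y$, the fiber $\pi^{-1}(\ast)$ over the chosen point maps onto an irreducible piece of $(\text{smooth})_m$ of dimension exactly $(m+1)(n-k)$ — smoothness makes the Mather discrepancy vanish, so this factor contributes $0$ to the limit in Proposition~\ref{fundamental_prop1}. Therefore
\begin{equation*}
\dim \psi_m(\pi^{-1}(x_\tau)) = \dim \psi_m^{Y}((\pi^{Y})^{-1}(y)) + (m+1)(n-k),
\end{equation*}
and plugging into Proposition~\ref{fundamental_prop1} for $X$ and for $Y$ gives
\begin{equation*}
\mld(x_\tau;X) = \lim_{m\to\infty}\big((m+1)n - \dim\psi_m(\pi^{-1}(x_\tau))\big) = \lim_{m\to\infty}\big((m+1)k - \dim\psi_m^{Y}((\pi^{Y})^{-1}(y))\big) = \mld(y;Y),
\end{equation*}
whence $\mld(x_\tau;X) - n = \mld(y;Y) - k$ after subtracting $n$ from both sides and rewriting $n = k + (n-k)$. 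An alternative, more self-contained route avoids the étale-local reduction entirely: use the $T_m$-orbit decomposition of $\psi_m(\pi^{-1}(x_\tau))$ exactly as in the torus-invariant case. Here the relevant orbits are indexed by lattice points in the relative interior of $\tau$ (these are the arcs whose generic point lands in $O(\tau)$, i.e.\ whose image is $x_\tau$), and the dimension of the orbit attached to $a \in \mathrm{relint}(\tau)\cap N$ is
\begin{equation*}
(m+1)n - \min\Big\{\sum_{i=1}^n \langle a,u_i\rangle \ \Big|\ u_1,\ldots,u_n \ \text{span}\ M_\mathbb{R},\ u_i \in M\cap\sigma^\vee\Big\},
\end{equation*}
by the same stabilizer computation; one then checks that for $a$ in the relative interior of $\tau$ the pairings $\langle a, u_i\rangle$ only see the component of $u_i$ in $\tau^\perp$'s complement, and the minimization decouples into a $(n-k)$-dimensional part that is forced to be $0$ (the cone there being a full linear space, one can take the $u_i$ and $-u_i$ directions) plus the $k$-dimensional minimization that computes $\mld(y;Y)$.

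The main obstacle is making the reduction precise and verifying it is compatible with Proposition~\ref{fundamental_prop1}: one must be careful that the "smooth factor" is genuinely there with the correct torus action, that passing to an étale neighborhood does not change $\dim\psi_m(\pi^{-1}(x))$ for large $m$ (this needs that these fiber dimensions are local in the étale/formal topology — true, but worth stating as the fibers $\pi^{-1}(x)$ and the truncation maps behave well under étale base change), and, in the orbit-theoretic approach, that the set of lattice points relevant to $x_\tau$ is exactly $\mathrm{relint}(\tau)\cap N$ rather than something larger — this uses Ishii's description of arcs on toric varieties (\cite{Ish03}) identifying which contact loci / orbits have center exactly $x_\tau$. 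Once the decoupling of the minimization is in hand, subtracting dimensions is immediate and the limits in Proposition~\ref{fundamental_prop1} are in fact already constant for $m \gg 0$, so no delicate convergence argument is needed.
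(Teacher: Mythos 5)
Your main route is the same as the paper's: restrict to the toric open subset $U_\tau=\Spec\ k[M\cap\tau^\vee]\subset X$ containing $O(\tau)$, which is Zariski-locally (not merely \'etale- or formally-locally) isomorphic to $Y\times(k^\ast)^{n-k}$ with $x_\tau\mapsto (y,\underline{1})$, and then combine the compatibility of $\psi_m$ and $\pi$ with products with the fact that the smooth torus factor contributes nothing to the invariant $\lambda$. However, the dimension count as written fails. For the smooth factor $T'=(k^\ast)^{n-k}$, the set $\psi_m\big((\pi^{T'})^{-1}(\ast)\big)$ is the fiber of $T'_m\to T'$ over $\ast$, whose dimension is $m(n-k)$, not $(m+1)(n-k)$; the latter is the dimension of all of $T'_m$, and restricting to jets centered at $\ast$ cuts it down by $n-k$ (cf.\ Remark \ref{rem_smooth}). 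With your value, your displayed chain yields $\mld(x_\tau;X)=\mld(y;Y)$, which contradicts the theorem unless $k=n$, and the closing step (``whence $\mld(x_\tau;X)-n=\mld(y;Y)-k$ after rewriting $n=k+(n-k)$'') is a non sequitur: subtracting $n$ from both sides of your equality gives $\mld(y;Y)-n$, not $\mld(y;Y)-k$. The correct count, $\dim\psi_m(\pi^{-1}(x_\tau))=\dim\psi_m^{Y}((\pi^{Y})^{-1}(y))+m(n-k)$, gives $(m+1)n-\dim\psi_m(\pi^{-1}(x_\tau))=(m+1)k-\dim\psi_m^{Y}((\pi^{Y})^{-1}(y))+(n-k)$, hence $\mld(x_\tau;X)=\mld(y;Y)+(n-k)$, which is exactly the asserted identity. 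This is precisely how the paper argues, phrased through $\lambda$: Proposition \ref{prop_product} gives $\lambda((x,y))=\lambda(x)$ when the second factor is smooth, so $\lambda(x_\tau)=\lambda(y)$, and the theorem follows from Proposition \ref{fundamental_prop}.

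Two further remarks. Since the reduction takes place on the Zariski-open toric chart $U_\tau$, and $\pi^{-1}(x_\tau)$ only depends on such a neighborhood, no \'etale- or formal-local invariance of $\mld$ needs to be invoked; worrying about it is unnecessary. Your alternative orbit-theoretic sketch (orbits indexed by lattice points of the relative interior of $\tau$, with the minimization decoupling into a part forced to be $0$ and the $k$-dimensional part computing $\mld(y;Y)$) is plausible but not a proof as stated: Theorem \ref{thm_stablizer} and the thin-set comparison in Theorem \ref{thm_toric} are established for $a\in\Int(\sigma)$ and points over $x_\sigma$, and both would have to be reworked for points over $x_\tau$; the product reduction makes this unnecessary.
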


We consider the case of very general hypersurfaces in Section \ref{sec5}. Let $f=\sum_{i=1}^N a_{I^i} x^{I^i}$ be the defining equation of a hypersurface $X\subset \mathbb{A}^{n+1}$, where $I^i$ are multi-indices and $x^{I^i}$ stands for $\Pi _{j=1}^{n+1} x_j ^{I_j^i}$. The \emph{support} of $f$ is the set $A:=\{I^1,\ldots,I^N\}\subset \mathbb{Z}^{n+1}$. When $A\neq \emptyset$, the \emph{dimension} of $A$ is the dimension of the linear span over $\mathbb{Q}$ of the convex hull of $A-a$, for any $a\in A$. Following from the result of Yu (\cite[Theorem 3]{Yu16}), we deduce that for a support $A$ such that $\dim(A)\geq 2$ or $\dim(A)=1$ and the convex hull of $A$ contains exactly two integral points, and for general coefficients $a_{I^i}$, $X$ is an integral hypersurface. Then, under a certain generality condition, we give a lower bound for the Mather minimal log discrepancy of $X$ at the origin. As in the case of toric varieties, we write $\psi_m(\pi^{-1}(0))$ as a disjoint union, up to the image of a thin set, of subsets $C^m_\alpha$, with $\alpha=(\alpha_1,\ldots,\alpha_{n+1})$ running over all $(n+1)$-tuples of positive integers. For simplicity, we define the \emph{product} of an $(n+1)$-tuple $\alpha$ with a multi-index $I$ as $\alpha\cdot I:=\sum_{j=1}^{n+1}\alpha_j I_j$. An $(n+1)$-tuple $\alpha$ is called \emph{feasible} if $\min_{1\leq i \leq N} \{\alpha \cdot I^i\}$ is attained by at least two different $i$'s. We show that $C^m_\alpha=\emptyset$ if $\alpha$ is not feasible; when $f$ has a fixed support and very general coefficients, $\dim(C^m_\alpha)$ is bounded above by
\begin{equation*}
mn-\sum_{j=1}^{n+1} (\alpha_j-1)-1+ \min_{1\leq i\leq N} \{I^i\cdot \alpha \}-\underset{1\leq j\leq n+1\textnormal{ with }  I_j^i>0}{\min_{1\leq i\leq N}}\{I^i\cdot \alpha-\alpha_j \}.
\end{equation*}
By taking the maximum over all feasible $\alpha$'s, we obtain the following theorem:

\begin{thm}
Let $f=\sum _{i=1} ^N a_{I^i} x^{I^i}$ be a polynomial with a fixed support $A$ such that $f$ has no constant term and that $f$ is not divisible by any $x_i$, and let $X$ be the hypersurface defined by $f$. If $A$ is $1$-dimensional and the convex hull contains only two integral points, or if $A$ has dimension $\geq 2$, then for very general coefficients $(a_{I^i})_{1\leq i\leq N}$, the hypersurface $X$ is integral and we have
\begin{equation*}
\mld(0;X)\geq \underset{\alpha}{\min} \{\sum_{j=1}^{n+1} (\alpha_j-1)+1-\underset{1\leq i\leq N}{\min} \{I^i\cdot \alpha \}+\underset{1\leq j\leq n+1\textnormal{ with }  I_j^i>0}{\min_{1\leq i\leq N}}\{I^i\cdot \alpha-\alpha_j \} \}+n,
\end{equation*}
where the first minimum is taken over all feasible $(n+1)$-tuples $\alpha$.
\end{thm}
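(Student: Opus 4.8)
The plan is to combine Proposition~\ref{fundamental_prop1} with an explicit stratification of $\psi_m(\pi^{-1}(0))$ and the stated dimension bound for each stratum $C^m_\alpha$. First I would set up the arc-space description: an arc $\gamma$ on $\mathbb{A}^{n+1}$ centered at the origin and lying on $X$ is given by power series $x_j(t)=\sum_{\ell\ge 1} c_{j,\ell}\,t^\ell$ for $j=1,\dots,n+1$ with $f(x_1(t),\dots,x_{n+1}(t))\equiv 0$; its order vector $\alpha=(\ord_t x_1,\dots,\ord_t x_{n+1})\in\mathbb{Z}_{>0}^{n+1}$ (allowing $\infty$ on a thin locus where some $x_j\equiv 0$, which can be discarded since $f$ is not divisible by any $x_i$ so those arcs do not dominate a component) defines a partition of $\pi^{-1}(0)$ into locally closed cylinders $C_\alpha$, and $C^m_\alpha:=\psi_m(C_\alpha)$. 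The leading-term analysis of the identity $f(\gamma(t))\equiv 0$ forces the minimum $\min_i\{\alpha\cdot I^i\}$ to be attained at least twice (otherwise the unique lowest-order term cannot cancel), which gives $C^m_\alpha=\emptyset$ unless $\alpha$ is feasible. Up to a thin set (whose image has dimension strictly smaller, hence does not affect the limit) we have $\psi_m(\pi^{-1}(0))=\bigsqcup_{\alpha\ \mathrm{feasible}} C^m_\alpha$, so $\dim\psi_m(\pi^{-1}(0))=\max_{\alpha}\dim C^m_\alpha$ for $m$ large.

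Next I would establish the dimension bound for $C^m_\alpha$ stated just before the theorem. The idea is to count free parameters among the truncated coefficients $(c_{j,\ell})_{1\le j\le n+1,\ 1\le \ell\le m}$ subject to the truncated equation $f(\gamma(t))\equiv 0 \pmod{t^{m+1}}$. Writing $d:=\min_i\{\alpha\cdot I^i\}$, the vanishing conditions on $f(\gamma(t))$ range over coefficients of $t^e$ for $d\le e\le m$ (lower degrees vanish automatically, higher ones are cut off by truncation), giving roughly $m-d+1$ equations; one must check these are ``as independent as possible'' for very general coefficients $a_{I^i}$ — this is exactly where the very-general hypothesis and Yu's integrality result (\cite[Theorem~3]{Yu16}) enter, guaranteeing that $X$ is integral and that no unexpected collapse of the equations occurs. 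Fixing the orders $\alpha$ forces $c_{j,\ell}=0$ for $\ell<\alpha_j$, removing $\sum_j(\alpha_j-1)$ parameters, and the residual correction term $\min_i I^i\cdot\alpha-\min_{i,j:\,I^i_j>0} (I^i\cdot\alpha-\alpha_j)$ accounts for the fact that the highest-order nontrivial equations only constrain some variables (those $x_j$ appearing with positive exponent in a minimizing monomial), not all of them; bookkeeping these contributions yields the asserted upper bound
\[
\dim C^m_\alpha \le mn-\sum_{j=1}^{n+1}(\alpha_j-1)-1+\min_{1\le i\le N}\{I^i\cdot\alpha\}-\underset{1\le j\le n+1\text{ with }I_j^i>0}{\min_{1\le i\le N}}\{I^i\cdot\alpha-\alpha_j\}.
\]

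Finally I would plug this into Proposition~\ref{fundamental_prop1}. Since $\dim X=n$, we get
\[
\mld(0;X)=\lim_{m\to\infty}\big((m+1)n-\dim\psi_m(\pi^{-1}(0))\big)\ge \lim_{m\to\infty}\big((m+1)n-\max_{\alpha\ \mathrm{feasible}}\dim C^m_\alpha\big),
\]
and substituting the bound above the $mn$ terms cancel against $(m+1)n$ up to the constant $n$, leaving exactly
\[
\mld(0;X)\ge \min_{\alpha\ \mathrm{feasible}}\Big\{\sum_{j=1}^{n+1}(\alpha_j-1)+1-\min_{1\le i\le N}\{I^i\cdot\alpha\}+\underset{1\le j\le n+1\text{ with }I_j^i>0}{\min_{1\le i\le N}}\{I^i\cdot\alpha-\alpha_j\}\Big\}+n,
\]
where one also checks the minimum on the right is finite (attained, since only finitely many $\alpha$ with bounded entries can be feasible, or the expression blows up for large $\alpha$). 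The integrality of $X$ for very general coefficients is quoted from the discussion preceding the theorem via \cite[Theorem~3]{Yu16}. The main obstacle I anticipate is the dimension estimate for $C^m_\alpha$: one must carefully organize the $m-d+1$ equations coming from $f(\gamma(t))\equiv 0$, identify precisely which truncated coefficients each equation can be solved for, and invoke the very-general hypothesis at the right moment to rule out degenerate drops in codimension — in particular handling the top-degree equations where only the variables $x_j$ with $I^i_j>0$ in a minimizing monomial are constrained, which is the source of the final ``$\min\{I^i\cdot\alpha-\alpha_j\}$'' correction term.
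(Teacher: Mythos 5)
Your overall architecture (stratify $\pi^{-1}(0)$ by the order vector $\alpha$, show non-feasible strata are empty, bound $\dim C^m_\alpha$, feed into Proposition \ref{fundamental_prop1}) matches the paper's strategy, but two steps as you describe them have genuine gaps. The first is the reduction $\dim\psi_m(\pi^{-1}(0))=\max_{\alpha\ \mathrm{feasible}}\dim C^m_\alpha$ "up to a thin set'' and the subsequent passage to the limit. For a fixed $m$ the strata $C^m_\alpha$ only cover jets coming from arcs with all orders $\leq m$, and, more seriously, the dimension bound of Lemma \ref{lem_second} is only valid for $m$ large \emph{depending on} $\alpha$ (the thresholds $m_0,m_1,\ldots,m_k$ in its proof grow with $\alpha$), while the set of feasible $\alpha$ relevant at level $m$ grows with $m$; so you cannot apply the bound simultaneously to all strata and take the maximum. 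The paper avoids this by a different reduction: $\pi^{-1}(0)$ has finitely many irreducible components (Proposition \ref{prop_finite_components}), the thin ones are discarded via Lemma \ref{lem_4.3}, and a pigeonhole argument using Lemma \ref{lem_4.3}(2) produces a single fat component $Z_1$ with $\dim(C^m)=\dim(\psi_m(Z_1))$ for all $m\gg 0$; the generic order vector $\alpha'$ of $Z_1$ is one fixed tuple, $C^m_{\alpha'}$ contains a dense open subset of $\psi_m(Z_1)$, and Lemma \ref{lem_second} is applied only to this single $\alpha'$. Some device of this kind (a fixed $\alpha$ independent of $m$ controlling the dimension) is needed; your proposal does not supply it.

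The second gap is the dimension bound for $C^m_\alpha$ itself, which you acknowledge as the main obstacle but whose sketch does not produce the stated inequality. Counting only the equations coming from the coefficients of $t^e$ with $n_0(\alpha)\leq e\leq m$ gives at best $\dim C^m_\alpha\leq mn-\sum_j(\alpha_j-1)-1+\min_i\{I^i\cdot\alpha\}$, which is \emph{weaker} than the asserted bound by the term $\mu=\min_{i,j:\,I^i_j>0}\{I^i\cdot\alpha-\alpha_j\}\geq 0$; that term is not a relaxation accounting for equations constraining fewer variables, but an improvement obtained from \emph{extra} equations: since $C^m=\psi_m(\pi^{-1}(0))$ consists of jets liftable to arcs, the coefficients $G_k$ for $k$ up to $n_0(\alpha)'+m-\alpha_{j_0}$ (which exceeds $m$) still only involve variables with superscript $\leq m$ and hence cut $U_m$. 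Making each such equation drop the dimension by exactly one requires the stratification $A_0,A_1,\ldots$ by successive vanishing of the leading polynomials $T_0,T_1,\ldots$, the termination argument, and the condition $\Delta^\alpha$ (i.e. coefficients in $F_\alpha$, which is where "very general'' enters beyond Yu's integrality criterion). None of this is recoverable from the parameter count as you state it, so the proposal as written does not establish the inequality.
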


In spite of the fact that the theorem only gives a lower bound of Mather minimal log discrepancy, we can use the proof of the above result to show that the inequality is actually an equality in many cases. We end the section with various examples. These examples show that the lower bound can be attained in many cases, but we also see that the inequality in the theorem can be strict.

\proof[Acknowledgements]
I want to express my deepest gratitude to my advisor, Mircea Musta\c{t}\u{a}, for years of guidance and patience. This paper would not have been possible without him. I also thank Karen Smith for a lot of helpful comments, and Mattias Jonsson, James Tappenden and Yuchen Zhang for some corrections.

\section{Preliminaries on jet schemes and arcs spaces}
\label{sec2}
In this section we review some basic properties of jet schemes and arc spaces that we need in the following sections. We mostly follow \cite{EM09}. For more details, see \cite{M14}, \cite{DL99} and \cite{dF16}.

\subsection{Jet schemes}
$\\$
A variety is an integral, separated scheme of finite type over a field. Let $k$ be an algebraically closed field of arbitrary characteristic and $X$ be a scheme of finite type over $k$. For each nonnegative integer $m$, we define the $m^{\textnormal{th}}$ \emph{jet scheme} of $X$, denoted by $X_m$, to be a scheme over $k$ such that for every $k$-algebra $A$ we have a functorial bijection
\begin{equation}
\label{eqn_jet_defn}
\textnormal{Hom}_{\textnormal{Sch}/k}(\Spec(A),X_m)\cong \textnormal{Hom}_{\textnormal{Sch}/k}(\Spec\ A[t]/(t^{m+1}),X).
\end{equation}

The jet schemes $X_m$ exist according to \cite[Proposition 2.2]{EM09}. Moreover, they are unique up to a canonical isomorphism since the bijection (\ref{eqn_jet_defn}) describes the functor of points of $X_m$. In particular, each element of the left-hand side of the bijection (\ref{eqn_jet_defn}) is an $A$-valued point of $X_m$, which is also called an $A$-valued $m$-jet of $X$. A $k$-valued point of $X_m$ is simply called an $m$-jet of $X$. Clearly when $m=0$ we have $X_0\cong X$. The canonical truncation map $A[t]/(t^{m+1})\rightarrow A[t]/(t^{p+1})$ for $m>p$ induces the map
\begin{equation*}
\textnormal{Hom}(\Spec\ A[t]/(t^{m+1}),X)\longrightarrow \textnormal{Hom}(\Spec\ A[t]/(t^{p+1}),X).
\end{equation*}
This induces via the bijection (\ref{eqn_jet_defn}) a canonical projection $\pi_{m,p}:X_m\longrightarrow X_p$. We denote this map by $\pi_m$ when $p=0$. These canonical projections satisfy the obvious compatibilities $\pi_{p,q}\circ \pi_{m,p}=\pi_{m,q}$ for $m>p>q$.

\begin{rems}
\label{jet_rems}
The following facts follow easily from the definition:

(i) If $f:X\rightarrow Y$ is a morphism of schemes of finite type over $k$, then there is an induced morphism of jet schemes $f_m:X_m\rightarrow Y_m$. Note that the induced maps $f_m$ are compatible with the canonical projections $\pi_{p,q}$, i.e. $\pi_{m,p}\circ f_m=f_p\circ \pi_{m,p}$.

(ii) For schemes $X$ and $Y$ of finite type over $k$, there is a canonical isomorphism
\begin{equation*}
(X\times Y)_m\cong X_m\times Y_m,
\end{equation*}
for every $m\geq 0$.

(iii) If $G$ is a group scheme over $k$ acting on a scheme $X$ of finite type over $k$, then $G_m$ is also a group scheme over $k$ and it acts on $X_m$.
\end{rems}

\begin{exmp}
Consider an affine scheme $X\hookrightarrow \A^n$ and let $g_1,\ldots,g_r\in k[x_1,\ldots,x_n]$ be generators for the ideal defining $X$. For a $k$-algebra $A$, consider an $A$-valued m-jet $\gamma$ of $X$ represented by $\gamma :\Spec\ A[t]/(t^{m+1})\rightarrow X$. Giving $\gamma$ is equivalent to giving a morphism of $k$-algebras
\begin{equation*}
\gamma^\ast: k[x_1,\ldots,x_n]/[g_1,\ldots,g_r]\longrightarrow A[t]/(t^{m+1}).
\end{equation*}
Let us write
\begin{equation*}
\gamma^\ast(x_i)=\sum_{j=0}^m x_i^{(j)} t^j, \textnormal{ for } 1\leq i\leq n.
\end{equation*}
They should satisfy $g_l(\gamma^\ast(x_1),\ldots,\gamma^\ast(x_n))=0$ in $k[t]\slash t^{m+1}$ for $1\leq l\leq r$. If we write
\begin{equation*}
g_l(\sum_{j=0}^m x_1^{(j)} t^j,\ldots, \sum_{j=0}^m x_n^{(j)} t^j)=\sum_{j=0}^m G_l^{(j)}(\underline{x})t^j\quad (\textnormal{mod } t^{m+1}),
\end{equation*}
we see that
\begin{equation}
X_m\cong \Spec\ k[x_i^{(j)} | 1\leq i\leq n, 1\leq j\leq m]/(G_l^{(j)}|1\leq l\leq r, 0\leq j\leq m).
\label{jet_affine}
\end{equation}
In particular, we conclude the jet schemes of an affine scheme are also affine schemes, of finite type over $k$.
\end{exmp}

\begin{rem}
It follows from the above example that the canonical projections $\pi_{m,p}:X_m\rightarrow X_p$ are affine morphisms.
\end{rem}

\begin{rem}
Another consequence of the above example is that if $X\hookrightarrow \A^n$ is a closed immersion, then the induced morphism of jet schemes $X_m\hookrightarrow (\A^n)_m$ is also a closed immersion. Moreover, we deduce from the explicit description of the equations of $X_m$ in $(\A^n)_m$ that more generally, if $X\hookrightarrow Y$ is a closed immersion then so is the induced map $X_m\rightarrow Y_m$.
\end{rem}

\begin{exmp}
\label{jet_affine_space}
The simplest (but important) example is $X=\A^n$. It follows immediately from equation (\ref{jet_affine}) that $(\A^n)_m\cong \A^{(m+1)n}$. Furthermore, the canonical projections $\pi_{m,p}$ are just projections along certain coordinate planes.
\end{exmp}

\begin{lem}
\label{lem_etale}
(\cite[Lemma 2.9]{EM09}) If $f:X\rightarrow Y$ is an \'etale morphism, then for every $m\geq 0$ the following commutative diagram is Cartesian:

$$\begin{CD}
X_m @>f_m>> Y_m\\
@V\pi_m^X VV @V\pi_m^Y VV\\
X @>f>> Y.
\end{CD}$$
\end{lem}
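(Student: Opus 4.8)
The plan is to prove that the square is Cartesian by comparing functors of points and invoking Yoneda's lemma. Since the square commutes by functoriality of jet schemes (Remark \ref{jet_rems}(i)), the universal property of the fiber product supplies a canonical morphism $\Phi\colon X_m\to X\times_Y Y_m$ with $\mathrm{pr}_1\circ\Phi=\pi_m^X$ and $\mathrm{pr}_2\circ\Phi=f_m$. It then suffices to show that $\Phi$ induces a bijection on $A$-valued points for every $k$-algebra $A$: then $\Phi$ is an isomorphism, and since it was built from the two projections it identifies the square with the fiber-product square.

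Fix a $k$-algebra $A$ and set $B:=A[t]/(t^{m+1})$. The truncation $B\twoheadrightarrow A$, $t\mapsto 0$, realizes $\Spec A$ as a closed subscheme $\iota\colon\Spec A\hookrightarrow\Spec B$ whose defining ideal $(t)B$ is nilpotent, as $(t)^{m+1}=0$ in $B$. By the defining bijection (\ref{eqn_jet_defn}), $\textnormal{Hom}(\Spec A,X_m)=\textnormal{Hom}(\Spec B,X)$ and $\textnormal{Hom}(\Spec A,Y_m)=\textnormal{Hom}(\Spec B,Y)$, and under these identifications the canonical projections $\pi_m^X,\pi_m^Y$ correspond to precomposition with $\iota$ (this is how $\pi_{m,0}$ was defined in Section \ref{sec2}). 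Hence $\textnormal{Hom}(\Spec A,X\times_Y Y_m)$ is exactly the set of pairs $(\phi,\gamma)$ with $\phi\in\textnormal{Hom}(\Spec A,X)$, $\gamma\in\textnormal{Hom}(\Spec B,Y)$ and $f\circ\phi=\gamma\circ\iota$, and on these sets $\Phi$ sends an $m$-jet $\delta\colon\Spec B\to X$ to the pair $(\delta\circ\iota,\,f\circ\delta)$.

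The main point is to invert this assignment, and this is exactly where étaleness enters. Because $f$ is étale it is formally étale, so for the nilpotent thickening $\iota$ and any pair $(\phi,\gamma)$ as above --- that is, any commutative square with $\phi$ along the top, $\gamma$ along the bottom, $f$ on the right and $\iota$ on the left --- there is a \emph{unique} morphism $\delta\colon\Spec B\to X$ with $\delta\circ\iota=\phi$ and $f\circ\delta=\gamma$. Existence gives surjectivity of $\Phi$ on $A$-points, uniqueness gives injectivity, and the resulting bijection is plainly natural in $A$; so Yoneda finishes the proof. (If one prefers to use only the square-zero form of infinitesimal lifting, one constructs $\delta$ step by step through the square-zero surjections $A[t]/(t^{j+1})\to A[t]/(t^j)$ for $j=1,\ldots,m$.)

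I do not expect a serious obstacle here: the single external ingredient is the infinitesimal lifting criterion for étale morphisms, and the rest is bookkeeping --- the most delicate points being to match the projection $Y_m\to Y$ with precomposition along $\iota$ and to check naturality of the bijection in $A$. A less efficient alternative would be to reduce, by localizing on the source, to the case where $f$ is standard étale, $\Spec\bigl(\mathcal{O}_Y[x]_{h'}/(h)\bigr)\to\Spec\mathcal{O}_Y$ with $h$ monic and $h'$ a unit, and then to verify the Cartesian property directly from the explicit jet equations (\ref{jet_affine}); this works but is more computational and less transparent.
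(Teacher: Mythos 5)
Your argument is correct; note that the paper itself gives no proof of this lemma, simply citing \cite[Lemma 2.9]{EM09}, and your functor-of-points argument via the infinitesimal lifting property of (formally) \'etale morphisms is precisely the standard proof given in that reference. The only points needing care — identifying $\pi_m^X,\pi_m^Y$ with precomposition along the nilpotent thickening $\Spec A\hookrightarrow \Spec A[t]/(t^{m+1})$, factoring that thickening into square-zero extensions, and naturality in $A$ before invoking Yoneda — are all handled in your write-up.
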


\begin{cor}
\label{jet_smooth}
(\cite[Corollary 2.11]{EM09}) If $X$ is a smooth variety of dimension $n$, then the canonical projections $\pi_{m,p}$ are locally trivial fibrations with fiber $\mathbb{A}^{(m-p)n}$. In particular, $X_m$ is smooth of dimension $(m+1)n$.
\end{cor}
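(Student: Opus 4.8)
The plan is to reduce the statement to the case $X=\A^n$ by means of \'etale charts. Since $X$ is smooth of dimension $n$ over $k$, it admits a cover by Zariski-open sets $U$ each carrying an \'etale morphism $f\colon U\to\A^n$; this is the standard local description of smooth varieties. Applying Lemma~\ref{lem_etale} to the open immersion $U\hookrightarrow X$ (which is \'etale) shows $U_m=\pi_m^{-1}(U)\subset X_m$, and since $\pi_p\circ\pi_{m,p}=\pi_m$ it follows that $\pi_{m,p}$ restricts over $U_p\subset X_p$ to the canonical projection $\pi_{m,p}^U\colon U_m\to U_p$. As the property of being a locally trivial fibration with fiber $\A^{(m-p)n}$ can be checked locally on the base $X_p$, and the sets $U_p$ cover $X_p$, it suffices to treat each such $U$; thus we may assume $X$ itself is equipped with an \'etale morphism $f\colon X\to\A^n$.

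Under this assumption, Lemma~\ref{lem_etale} furnishes Cartesian squares identifying
\[
X_m\cong X\times_{\A^n}(\A^n)_m,\qquad X_p\cong X\times_{\A^n}(\A^n)_p,
\]
the maps to $\A^n$ being the canonical projections $\pi_m$ and $\pi_p$. Since the morphism $f_m$ is compatible with the canonical projections (Remark~\ref{jet_rems}(i)), these identifications are compatible with truncation, and concatenating the two squares gives a Cartesian square
\[
X_m\cong X_p\times_{(\A^n)_p}(\A^n)_m,
\]
in which $\pi_{m,p}\colon X_m\to X_p$ becomes the base change of $\pi_{m,p}\colon(\A^n)_m\to(\A^n)_p$. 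By Example~\ref{jet_affine_space}, under the identifications $(\A^n)_m\cong\A^{(m+1)n}$ and $(\A^n)_p\cong\A^{(p+1)n}$ this last map is a projection onto a coordinate subspace, hence a trivial fibration with fiber $\A^{(m-p)n}$. A base change of a trivial fibration is again a trivial fibration with the same fiber, so $\pi_{m,p}\colon X_m\to X_p$ is a locally trivial fibration with fiber $\A^{(m-p)n}$, which proves the first assertion.

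For the final statement, take $p=0$: then $\pi_m\colon X_m\to X$ is a locally trivial fibration with fiber $\A^{mn}$ whose base $X=X_0$ is smooth of dimension $n$, so $X_m$ is smooth of dimension $n+mn=(m+1)n$. (One can instead argue by induction on $m$, using that $\pi_{m,m-1}\colon X_m\to X_{m-1}$ is a locally trivial $\A^n$-bundle over $X_{m-1}$.)

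The one point requiring care — and the only real obstacle — is the claim that the two Cartesian squares of Lemma~\ref{lem_etale}, at levels $m$ and $p$, glue into a single Cartesian square expressing $X_m$ over $X_p$ compatibly with $\pi_{m,p}$; in other words, that the isomorphisms of Lemma~\ref{lem_etale} are natural in the jet level. This is a consequence of the functoriality of jet schemes (the compatibility $\pi_{m,p}\circ f_m=f_p\circ\pi_{m,p}$ of Remark~\ref{jet_rems}(i)) together with the uniqueness of $X_m$ as the scheme representing the functor of (\ref{eqn_jet_defn}), but it deserves to be written out explicitly. Granting it, the rest is formal: stability of trivial fibrations under base change, and smoothness of the total space of a fibration with smooth base and smooth fiber.
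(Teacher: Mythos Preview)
Your proof is correct and follows essentially the same approach as the paper: reduce to the affine space via \'etale charts using Lemma~\ref{lem_etale}, then invoke Example~\ref{jet_affine_space}. You have simply filled in more of the details (the gluing of the two Cartesian squares and the base-change argument) that the paper's one-line proof leaves implicit.
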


\begin{proof}
For every point $x\in X$, one can find an open subset $x\in U$ and an \'etale morphism $U\rightarrow \mathbb{A}^n$. Using Lemma \ref{lem_etale}, the assertion is reduced to the case of an affine space, which follows from Example \ref{jet_affine_space}.
\end{proof}

\subsection{Arc spaces and cylinders}
\label{section_arc_spaces}
$\\$
Given a scheme $X$ of finite type over $k$ as before, we have a projective system
\begin{equation*}
\dots \longrightarrow X_m\longrightarrow X_{m-1}\longrightarrow \dots\longrightarrow X_1\longrightarrow X_0=X,
\end{equation*}
in which all morphisms are affine. Therefore, the projective limit exists in the category of $k$-schemes. The projective limit is denoted by $X_\infty$ and it is called the \emph{arc space} of $X$. Unlike the jet schemes, the arc space is typically not of finite type over $k$. We denote by $\psi_m$ the canonical map $X_\infty\rightarrow X_m$. We also write $\pi:=\psi_0: X_\infty\rightarrow X_0=X$ for the projection to the original scheme $X$.

It follows from the definition of jet schemes and projective limit that for every field extension $K$ of $k$, we have functorial isomorphisms
\begin{equation*}
\textnormal{Hom}(\Spec(K), X_\infty)\cong \underset{\longleftarrow}{\lim}\ \textnormal{Hom}(\Spec\ K[t]/t^{m+1},X) \cong \textnormal{Hom}(\Spec\ K[\![t]\!], X).
\end{equation*}
A $k$-valued point of $X_\infty$ is called an arc on $X$ and is represented by
\begin{equation}
\label{arc}
\gamma:\Spec\ k[\![t]\!]\longrightarrow X.
\end{equation}
For every field extension $K$ of $k$, a $K$-valued point of $X_\infty$ is called an $K$-valued arc of $X$. From now on, whenever we deal with $X_m$ and $X_\infty$ we will restrict to their $k$-valued points. Since the jet schemes are of finite type over $k$ this causes no ambiguity. Note that since we only consider the $k$-valued points, $X_\infty$ is the set-theoretic projective limit of the $X_m$ and the Zariski topology on $X_\infty$ is the projective limit topology.
\begin{rem}
As in the case of jet schemes, if $f:X\rightarrow Y$ is a morphism of schemes of finite type over $k$, then we have an induced map on the arc spaces $f_\infty: X_\infty\rightarrow Y_\infty$ that is compatible with canonical projections.
\end{rem}

\begin{rem}
\label{arc_product}
For schemes $X$ and $Y$ of finite type over $k$, there is a canonical isomorphism $(X\times Y)_\infty\cong X_\infty\times Y_\infty$ and we have the following commutative diagram:
$$\begin{CD}
(X\times Y)_\infty @>\cong>> X_\infty\times Y_\infty\\
@V\psi_m^{X\times Y} VV @V\psi_m^X\times \psi_m^Y VV\\
(X\times Y)_m @>\cong>> X_m\times Y_m.
\end{CD}$$
\end{rem}

We now define the notion of cylinders. Recall that a \emph{constructible set} in a scheme of finite type over $k$ is a finite union of locally closed subsets. A \emph{cylinder} in $X_\infty$ is a subset of the form $C=\psi_m^{-1} (S)$, for some nonnegative integer $m$ and some constructible subset $S$ of $X_m$. The arc spaces are typically not of finite type over $k$. So far most study on arc spaces has been focusing on cylinders and their irreducible components.

There is a special type of cylinders, the \emph{contact loci}, that will play an important role in what follows. To an ideal sheaf $\mathfrak{a}$, we associate subsets of arcs with prescribed vanishing order along $\mathfrak{a}$. More precisely, if $\gamma: \Spec\ k[\![t]\!]\rightarrow X$ is an arc, the inverse image of $\mathfrak{a}$ is an ideal in $k[\![t]\!]$ generated by $t^r$, for some $r$ (if the ideal is not zero). This $r$ is \emph{the order of $\gamma$ along $\mathfrak{a}$}, denoted by $\ord_\gamma (\mathfrak{a})$. When the inverse image is zero, we put $\ord_\gamma(\mathfrak{a})=\infty$. A contact locus is a subset of $X_\infty$ of one of the following forms:
\begin{equation*}
\cont^e(\mathfrak{a}):=\{\gamma\in X_\infty | \ord_\gamma(\mathfrak{a})=e\},
\end{equation*}
or
\begin{equation*}
\cont^{\geq e}(\mathfrak{a}):=\{\gamma\in X_\infty | \ord_\gamma(\mathfrak{a})\geq e\}.
\end{equation*}
We can similarly define subsets of $X_m$ with specified order along $\mathfrak{a}$, namely $\cont^e(\mathfrak{a})_m$ and $\cont^{\geq e}(\mathfrak{a})_m$, for $m\geq e$. It is clear that for every $m\geq e$, we have
\begin{equation*}
\cont^e(\mathfrak{a})=\psi_m^{-1}(\cont^e(\mathfrak{a})_m),\ \cont^{\geq e}(\mathfrak{a})=\psi_m^{-1}(\cont^{\geq e}(\mathfrak{a})_m).
\end{equation*}
This implies that $\cont^e(\mathfrak{a})$ is a locally closed set and $\cont^{\geq e}(\mathfrak{a})$ is a closed set.

\begin{defn}
Let $X$ be a scheme of finite type over $k$ of pure dimension $d$. A subset $A\subset X_\infty$ is \emph{thin} if there is some closed subvariety $S$ of $X$ whose dimension is strictly less than $d$ such that $A\subset S_\infty$. If a subset $A$ is not thin, it is \emph{fat}.
\end{defn}

We need the following result for our discussion in the following chapters:
\begin{lem}
\label{lem_4.3}
(\cite[Proposition 5.10]{EM05})
Let $X$ be a variety over $k$ of dimension $d$. Then

(1) For every $m\geq 0$, we have
\begin{equation*}
\dim(\psi_m(X_\infty))\leq (m+1)d.
\end{equation*}

(2) For every $m,\ n\geq 0$ with $m\geq n$, the fibers of $\psi_m(X_\infty)\rightarrow \psi_n(X_\infty)$ are of dimension $\leq (m-n)d$.
\end{lem}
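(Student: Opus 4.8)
The plan is to prove both statements simultaneously by induction on $d=\dim X$; the base case $d=0$, where $X=\Spec k$ and both sides vanish, is immediate. For the inductive step, fix a resolution of singularities $f\colon X'\to X$: a proper birational morphism with $X'$ smooth of dimension $d$ that is an isomorphism over $X\setminus W$ for some closed $W\subsetneq X$ (so $\dim W\le d-1$). The structural input is that, by the valuative criterion of properness, every arc of $X$ whose generic point maps into $X\setminus W$ lifts uniquely through $f$; hence $X_\infty=f_\infty(X'_\infty)\cup W_\infty$, and applying $\psi_m$ — using the compatibility $\psi_m^X\circ f_\infty=f_m\circ\psi_m^{X'}$ and the surjectivity of $\psi_m^{X'}$ (Corollary~\ref{jet_smooth}) — yields
\[
\psi_m(X_\infty)=f_m(X'_m)\cup\psi_m^W(W_\infty).
\]

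Part (1) then follows immediately: $\dim f_m(X'_m)\le\dim X'_m=(m+1)d$ by Corollary~\ref{jet_smooth}, while $\dim\psi_m^W(W_\infty)\le(m+1)(d-1)<(m+1)d$ by the inductive hypothesis applied to each irreducible component of $W$.

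For part (2), fix $n\le m$ and $\gamma_n\in\psi_n(X_\infty)$, and let $F\subseteq X_m$ be the fiber of $\psi_m(X_\infty)\to\psi_n(X_\infty)$ over $\gamma_n$, so that $F=\psi_m(C)$ with $C:=\psi_n^{-1}(\gamma_n)$. Write $F=(F\cap f_m(X'_m))\cup(F\cap\psi_m^W(W_\infty))$. If the second piece is nonempty then $\gamma_n\in\psi_n^W(W_\infty)$ and it equals the fiber over $\gamma_n$ of $\psi_m^W(W_\infty)\to\psi_n^W(W_\infty)$, of dimension $\le(m-n)\dim W<(m-n)d$ by the inductive hypothesis for $W$. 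For the first piece, a short diagram chase with the compatibilities above identifies $F\cap f_m(X'_m)=f_m\!\left((\pi^{X'}_{m,n})^{-1}(G)\right)$ where $G:=f_n^{-1}(\gamma_n)\subseteq X'_n$, and shows that the fibers of $f_m$ over points of this set are the full fibers of $f_m\colon X'_m\to X_m$. Since $\pi^{X'}_{m,n}$ is a locally trivial $\mathbb A^{(m-n)d}$-bundle (Corollary~\ref{jet_smooth}), one has $\dim(\pi^{X'}_{m,n})^{-1}(G)=\dim G+(m-n)d$; hence $\dim(F\cap f_m(X'_m))\le(m-n)d$ provided every fiber of $f_m$ over a point of $F\cap f_m(X'_m)$ has dimension at least $\dim G=\dim f_n^{-1}(\gamma_n)$. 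When the $0$-jet of $\gamma_n$ lies off $W$ this is trivial, since there $f$ — hence $f_n$ and $f_m$ — is an isomorphism and $G$ is a point; the substantive case is when the $0$-jet lies in $W$.

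The step I expect to be the main obstacle is exactly this last inequality: that the collapse in dimension $\dim f_n^{-1}(\gamma_n)$ under $f_n$ is compensated by the size of the fibers of $f_m$. This is an instance of the change-of-variables (transformation) rule for arc spaces in the form valid for singular $X$ — which involves the Mather--Jacobian ideal of the resolution $f$ and computes the fiber dimensions of the maps $f_\ell$ from the order of that ideal along arcs. Granting it, the required bound $\dim(F\cap f_m(X'_m))\le(m-n)d$ follows and the induction closes. Everything else in the argument is routine bookkeeping with truncation maps and the inductive hypothesis.
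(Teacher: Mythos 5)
The paper gives no proof of this lemma; it is quoted from \cite{EM05}, so your proposal can only be measured against the argument in that source and its relatives. Your part (1) is correct and is the standard argument: every arc not contained in $W_\infty$ lifts to the resolution by properness, $\psi_m^{X'}$ is surjective because $X'$ is smooth, hence $\psi_m(X_\infty)=f_m(X'_m)\cup\psi_m(W_\infty)$, and one concludes from Corollary \ref{jet_smooth} and induction on dimension applied to the components of $W$.

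Part (2), however, has a genuine gap, and it sits exactly where you flag it: the inequality $\dim f_m^{-1}(\gamma_m)\geq \dim G$, with $G=f_n^{-1}(\gamma_n)$, for every $\gamma_m\in F\cap f_m(X'_m)$. This is not ``an instance of the change-of-variables rule''. The Denef--Loeser transformation theorem describes fibers of $f_m$ only over truncations of arcs, in terms of the order $e$ of the relative Jacobian ideal along the lifted arc, and only when $m$ is large compared with $e$ (the fibers are then essentially $\mathbb{A}^e$); it gives no information about $\dim f_n^{-1}(\gamma_n)$. The regime in which $G$ is large is precisely when $n$ is small compared with the contact orders of $\gamma_n$ --- e.g.\ $\gamma_n$ the constant jet at a singular point, where $G$ consists of all $n$-jets of $X'$ lying in the scheme-theoretic fiber $f^{-1}(x)$, including jets contained in the exceptional locus, which the change-of-variables formalism does not see --- and there the fibration statement is vacuous at level $n$. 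Nor is the comparison realized by any map: already for the $A_1$ surface singularity and $\gamma_m$ the truncation of an arc whose lift crosses the exceptional curve at one point, the truncation $f_m^{-1}(\gamma_m)\to f_n^{-1}(\gamma_n)$ hits a single point of $G$ while $\dim G=n+1$, so the inequality, if true, is a purely numerical fact that must be proved. Unwinding what it demands in the simplest case (surfaces, $\gamma_n$ a constant jet, a component $E$ of $V(\mathfrak{m}_x\mathcal{O}_{X'})$ of multiplicity $a$), it amounts to discrepancy-type lower bounds such as $\hat{k}_E\geq 2a-1$, i.e.\ inequalities between Mather discrepancies and the multiplicities of $\mathfrak{m}_x\mathcal{O}_{X'}$ along exceptional divisors. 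Such estimates are of the same depth as the lemma itself and are normally \emph{deduced} from cylinder codimension bounds of exactly this kind, so as written your argument is incomplete, and at its decisive step essentially circular.

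For comparison, the proof in the cited source does not compare fibers of $f_m$ and $f_n$ at all: one stratifies the image of the arc space by the order of contact with the Jacobian ideal of $X$ itself, applies the Denef--Loeser fibration lemma to the one-step truncation maps $\psi_{m+1}(X_\infty)\to\psi_m(X_\infty)$ on each stratum (where the fibers are $d$-dimensional affine spaces once $m$ dominates the contact order), and handles the arcs lying in the singular locus by induction on dimension. If you want to salvage your resolution-based route, you would need to prove the fiber comparison directly, for instance by bounding $\dim f_n^{-1}(\gamma_n)$ from above in terms of the contact data of $\gamma_n$ with $\mathfrak{m}_x\mathcal{O}_{X'}$ and $\widehat{K}_{X'/X}$ --- which is a substantive addition, not bookkeeping.
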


\section{Preliminaries on Mather minimal log discrepancy}
\label{sec3}
%%%%%%%%%%%%%%%%%%%%%%%%%%%%%%%%%%%%%%%%%
In this section we introduce the Mather minimal log discrepancy following \cite{Ish11}. The definition is very similar to the usual minimal log discrepancy. Details on usual minimal log discrepancy and its relation to arc spaces can be found in \cite{EMY03}. Results on the relation between Mather minimal log discrepancy and the usual minimal log discrepancy can be found in \cite{EI13} and \cite{Ish13}. For details on Mather minimal log discrepancy, we refer to \cite{Ish11}, \cite{EI13} and \cite{Ish15}.

\subsection{Definition}
\begin{defn}
Let $X$ be a variety over a field $k$ and $f:Y\rightarrow X$ be a proper birational morphism of varieties, with $Y$ normal. Each prime divisor $E$ on $Y$ gives a valuation $\textnormal{ord}_E$ on $K(Y)=K(X)$. Here $E$ is called a \emph{divisor over} $X$ and we equate two divisors on two normal varieties over $X$ if they give rise to the same valuation on $X$. The \emph{center} of $E$ is the closure of the image of $E$ on $X$. A \emph{divisorial valuation} on $X$ is one of the form $v=q\cdot \textnormal{ord}_E$ where $q$ is a positive integer and $E$ is a divisor over $X$.
\end{defn}

Let $X$ be a variety of dimension $d$ over an algebraically closed field $k$ of characteristic zero. For simplicity we write $\Omega_X$ for the sheaf of relative differentials $\Omega_{X/k}$. The projection
\begin{equation*}
\pi: \mathbb{P}_X(\wedge^d \Omega_X)\longrightarrow X
\end{equation*}
is an isomorphism over the smooth locus $X_{\textnormal{reg}}\subset X$. In particular, there is a section $\sigma: X_{\textnormal{reg}}\rightarrow \mathbb{P}_X(\wedge^d \Omega_X)$.

\begin{defn}
The \emph{Nash blow-up} of $X$ is the closure of the image of $\sigma$, and is denoted by $\hat{X}$. It is a variety over $k$ with a projective morphism $\pi|_{\hat{X}}: \hat{X}\rightarrow X$ that is an isomorphism over the smooth locus of $X$. The line bundle
\begin{equation*}
\widehat{K}_X:=\mathcal{O}_{\mathbb{P}_X(\wedge^d \Omega_X)}(1)|_{\hat{X}}
\end{equation*}
is called the \emph{Mather canonical line bundle} of $X$.
\end{defn}

\begin{rem}
If $X$ is smooth, then clearly $\hat{X}=X$ and $\widehat{K}_X$ is just the canonical line bundle of $X$. More generally, the Nash blow-up can be thought of as the parameter space of limits of all tangent directions at smooth points of $X$.
\end{rem}

One can always find a resolution of singularities $f:Y\rightarrow X$ that factors through the Nash blow-up. Then the image of the $f^\ast (\wedge ^d \Omega_X)$ under the canonical homomorphism
\begin{equation*}
\wedge^d df: f^\ast (\wedge ^d \Omega_X)\rightarrow \wedge^d\Omega_Y
\end{equation*}
is of the form $J\wedge^d \Omega_Y$ where $J$ is an invertible ideal sheaf on $Y$ (\cite[Proposition 1.7]{dFEI08}). Let $\widehat{K}_{Y/X}$ be the effective divisor defined by $J$. This is supported on the exceptional locus of $f$ and it is called the \emph{Mather discrepancy divisor}. For each prime divisor $E$ on $Y$, we define $\hat{k}_E:=\ord_E(\widehat{K}_{Y/X})$. If $v=q\cdot \ord_E$ is a divisorial valuation, we write $\hat{k}_v:=q\cdot\hat{k}_E$.

\begin{defn}
Let $(X,\mathfrak{a})$ be a pair where $X$ is a variety over $k$ and $\mathfrak{a}$ is a nonzero ideal in $\mathcal{O}_X$. For a closed subset $W$ of $X$, the Mather minimal log discrepancy of $(X,\mathfrak{a})$ along $W$ is defined as
\begin{equation*}
\mld(W;X,\mathfrak{a}):=\inf\{\hat{k}_E-\ord_E(\mathfrak{a})+1 | E\textnormal{ is a divisor over }X\textnormal{ with center in }W\}.
\end{equation*}
When $\dim(X)=1$ and the infimum is negative, we make the convention that
\begin{equation*}
\mld(W;X,\mathfrak{a})=-\infty.
\end{equation*}
\end{defn}

\begin{rem}
If $\dim(X)\geq 2$ and $\mld(W;X,\mathfrak{a})<0$, then $\mld(W;X,\mathfrak{a})=-\infty$ (see \cite[Remark 3.4]{Ish11}). This is why we make the convention for the case when $\dim(X)=1$.
\end{rem}

\subsection{Relation to jet schemes and arc spaces}
$\\$
From now on we specialize to the case when $W=\{x\}$ for some closed point $x\in X$ and $\mathfrak{a}=\mathcal{O}_X$. We denote the Mather minimal log discrepancy by $\mld(x;X)$ for simplicity and write $\dim(X)=d$.

\begin{defn}
If $X$ and $Y$ are varieties over $k$, and $A\subset X$ and $B\subset Y$ are constructible subsets. Then a map $f: A\rightarrow B$ is a \emph{piecewise trivial fibration with fiber} $F$, if there exists a finite partition of B into locally closed subsets $S$ of $Y$ such that $f^{-1}(S)$ is isomorphic to $S\times F$ and $f|_{f^{-1}(S)}$ is the projection $S\times F\rightarrow S$ under the isomorphism.

Recall that for a scheme $X$ of finite type over $k$, there are canonical morphisms $\pi:X_\infty\rightarrow X$ and $\psi_m:X_\infty\rightarrow X_m$ for every $m\geq 0$ (in Subsection \ref{section_arc_spaces}). 
%A cylinder $A\subset X_\infty$ is \emph{stable} at level $n\in\mathbb{N}$ if it is a union of fibers of $\psi_n$ and if $\psi_{m+1}(X_\infty)\rightarrow \psi_m(X_\infty)$ is a piecewise trivial fibration over $\psi_m(A)$ with fiber $\A^{\dim(X)}$ for each $m\geq n$. $A$ is called stable if it is stable at some level. If $A$ is stable at level $n$, then the \emph{codimension} of $A$ is $(m+1)\dim(X)-\dim(\psi_m(A))$ for any $m\geq n$. Clearly this is independent of choice of $m$.
\end{defn}

\begin{defn}
Fix a closed point $x$ of $X$. For every $m\geq 0$ we define
\begin{equation*}
\lambda_m(x):=md-\dim \psi_m(\pi^{-1}(x)).
\end{equation*}
When there is no confusion we simply write $\lambda_m$ instead of $\lambda_m(x)$.
\end{defn}

\begin{rem}
\label{rem_smooth}
Corollary \ref{jet_smooth} shows that when $X$ is a smooth variety and $x$ is a closed point of $X$, we have for each $m\geq 0$,
\begin{equation*}
\lambda_m(x)=m\dim(X)-\dim(\psi_m(\pi^{-1}(x)))=0.
\end{equation*}
\end{rem}

\begin{lem}
\label{lem_lambda}
(\cite[Lemma 4.2]{Ish11}) For every $m\geq 0$, we have $\lambda_m\geq 0$ and $\lambda_{m+1} \geq \lambda_m$. Moreover, $\lambda_m$ is constant for $m\gg 0$.
\end{lem}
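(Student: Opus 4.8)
The plan is to reduce the lemma to three facts: $\lambda_0=0$, the monotonicity $\lambda_{m+1}\ge\lambda_m$, and a uniform bound $\lambda_m\le C$ with $C$ independent of $m$. Granting these, each $\lambda_m$ is an integer (it is $md$ minus the dimension of the nonempty constructible set $\psi_m(\pi^{-1}(x))$, which contains the image of the constant arc at $x$), so a nondecreasing integer sequence bounded above must be eventually constant, and it is $\ge\lambda_0=0$ throughout. For $\lambda_0$: since $\psi_0=\pi$ and the constant arc at $x$ lies in $\pi^{-1}(x)$, we get $\psi_0(\pi^{-1}(x))=\{x\}$, hence $\lambda_0=0$. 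For monotonicity, write $\lambda_{m+1}-\lambda_m=d-\big(\dim\psi_{m+1}(\pi^{-1}(x))-\dim\psi_m(\pi^{-1}(x))\big)$ and note that $\pi_{m+1,m}$ restricts to a surjection $\psi_{m+1}(\pi^{-1}(x))\to\psi_m(\pi^{-1}(x))$ each of whose fibers is contained in a fiber of $\psi_{m+1}(X_\infty)\to\psi_m(X_\infty)$; by Lemma \ref{lem_4.3}(2) the latter have dimension $\le d$, so $\dim\psi_{m+1}(\pi^{-1}(x))\le\dim\psi_m(\pi^{-1}(x))+d$ and hence $\lambda_{m+1}\ge\lambda_m$.

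The real content is the uniform upper bound. I would fix a resolution $f:Y\to X$ with $Y$ smooth that factors through the Nash blow-up (so that $\widehat{K}_{Y/X}$ is defined), together with a prime divisor $E$ on $Y$ having center $c_X(E)=\{x\}$; such an $E$ exists, for instance on a resolution dominating the blow-up of $X$ at $x$, and after further blowing up I may assume the support of $\widehat{K}_{Y/X}$ has simple normal crossings and that $E$ is smooth. Put $\hat{k}=\hat{k}_E$ and consider the cylinder $C=\cont^1(E)\cap\cont^{\hat{k}}(\widehat{K}_{Y/X})$ in $Y_\infty$, which is open and dense in $\cont^{\ge 1}(E)$; since $Y$ is smooth of dimension $d$, its truncation $C_m\subseteq Y_m$ has dimension $(m+1)d-1$ for all $m\gg 0$. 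Since every arc in $\cont^{\ge 1}(E)$ has its special point on $E$ and $f(E)=\{x\}$, its image under $f_\infty$ is centered at $x$; using $\psi_m^X\circ f_\infty=f_m\circ\psi_m^Y$ this gives $f_m(C_m)=\psi_m(f_\infty(C))\subseteq\psi_m(\pi^{-1}(x))$. Now apply the change-of-variables formula for the Mather discrepancy divisor (the Mather analogue of the Denef--Loeser / Ein--Musta\c{t}\u{a}--Yasuda formula; cf. \cite{EMY03}, \cite{dFEI08}, \cite{EI13}) to $f_\infty$ along $C\subseteq\cont^{\hat{k}}(\widehat{K}_{Y/X})$: for $m\gg 0$ it yields $\dim f_m(C_m)=\dim C_m-\hat{k}=(m+1)d-1-\hat{k}$. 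Therefore $\lambda_m=md-\dim\psi_m(\pi^{-1}(x))\le\hat{k}+1-d$ for all $m\gg 0$, which is the required bound.

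The step I expect to be the main obstacle is precisely this last dimension estimate: that truncating $f_\infty$ on arcs of prescribed order $\hat{k}$ along the Mather discrepancy lowers the dimension by exactly $\hat{k}$. Everything else is bookkeeping with fiber dimensions via Lemma \ref{lem_4.3}. I note that a refinement of this argument---stratifying $\psi_m(\pi^{-1}(x))$ according to the orders $(\ord_\gamma(E_i))$ of an arc along all the exceptional divisors $E_i$ of $Y$, and optimizing over the strata---computes $\dim\psi_m(\pi^{-1}(x))$ exactly for $m\gg 0$ and hence also yields Proposition \ref{fundamental_prop1}; but for the present lemma the one-sided bound above, together with the monotonicity, is all that is needed.
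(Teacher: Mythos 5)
Your proposal is essentially correct, but note that the paper contains no proof of this lemma at all: it is quoted verbatim from \cite[Lemma 4.2]{Ish11}, so there is nothing internal to compare against. What you have written is in effect a reconstruction of the standard argument behind Ishii's lemma. The elementary part is fine: $\lambda_0=0$ since $\psi_0=\pi$, and the monotonicity $\lambda_{m+1}\geq\lambda_m$ follows exactly as you say from Lemma \ref{lem_4.3}(2) applied to the fibers of $\pi_{m+1,m}$ restricted to $\psi_{m+1}(\pi^{-1}(x))$ (this also gives $\lambda_m\geq 0$). The genuine content is, as you identify, the uniform upper bound, and your route --- choose a resolution $f\colon Y\to X$ factoring through the Nash blow-up carrying a prime divisor $E$ with $c_X(E)=\{x\}$, push forward the cylinder $\cont^1(E)\cap\cont^{\hat{k}}(\widehat{K}_{Y/X})$, and use the jet-level fibration along $\widehat{K}_{Y/X}$ to get $\dim\psi_m(\pi^{-1}(x))\geq (m+1)d-1-\hat{k}_E$, hence $\lambda_m\leq \hat{k}_E+1-d$ --- is precisely the mechanism used in \cite{Ish11}, resting on the fibration theorem of \cite{dFEI08} (the Mather-discrepancy analogue of the Denef--Loeser key lemma: for $m\gg e$ the fibers of $f_m$ over the truncation of $\cont^{e}(\widehat{K}_{Y/X})$ are affine spaces $\mathbb{A}^{e}$). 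You correctly flag that this dimension-drop is the one step you are importing rather than proving; since the paper itself imports the entire lemma by citation, that level of rigor is acceptable, and all the surrounding details you supply (existence of $E$ via the blow-up of $x$, openness and density of your cylinder in $\cont^{\geq 1}(E)$, $\dim C_m=(m+1)d-1$ for $Y$ smooth, and $f_m(C_m)\subset\psi_m(\pi^{-1}(x))$ because $f(E)=\{x\}$) are correct. Two cosmetic remarks: the simple-normal-crossings and smoothness assumptions on $E$ are not needed anywhere in your argument, and for the inequality you actually only need the fiber dimension of $f_m$ on the contact locus to be at most $\hat{k}_E$, not the full piecewise-trivial fibration.
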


\begin{defn}
\label{defn_lambda}
According to the lemma above, $\underset{m\rightarrow \infty}{\lim} \lambda_m(x)$ exists and it is equal to $\lambda_m(x)$ for all $m$ large enough. We denote this limit by $\lambda(x)$. When there is no confusion, we also write this limit as $\lambda$.
\end{defn}

The following result from \cite{Ish11} describes the Mather minimal log discrepancy in terms of jet schemes and arc spaces. We use this result to reduce the problem to computing $\lambda(x)$ in what follows.

\begin{prop}
\label{fundamental_prop}
(\cite[Lemma 4.2]{Ish11}) If $X$ is a variety over $k$ of dimension $d$ and $x$ is a closed point of $X$, then
\begin{equation*}
\lambda(x)=\mld (x;X)-d.
\end{equation*}
\end{prop}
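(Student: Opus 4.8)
The plan is to reduce Proposition~\ref{fundamental_prop} to the statement of Proposition~\ref{fundamental_prop1}, which expresses $\mld(x;X)$ as $\lim_{m\to\infty}\big((m+1)d-\dim(\psi_m(\pi^{-1}(x)))\big)$. First I would recall that by Lemma~\ref{lem_lambda} and Definition~\ref{defn_lambda}, the quantity $\lambda_m(x)=md-\dim\psi_m(\pi^{-1}(x))$ stabilizes to $\lambda(x)$ for $m\gg0$. So it suffices to compare the two expressions term by term for large $m$: for $m$ sufficiently large,
\begin{equation*}
(m+1)d-\dim(\psi_m(\pi^{-1}(x)))=d+\big(md-\dim(\psi_m(\pi^{-1}(x)))\big)=d+\lambda_m(x)=d+\lambda(x).
\end{equation*}
Taking the limit as $m\to\infty$ on the left gives $\mld(x;X)$ by Proposition~\ref{fundamental_prop1}, and the right-hand side is the constant $d+\lambda(x)$. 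Hence $\mld(x;X)=d+\lambda(x)$, i.e. $\lambda(x)=\mld(x;X)-d$, which is the claim.

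Since both Proposition~\ref{fundamental_prop} and Proposition~\ref{fundamental_prop1} are attributed to the same source (\cite[Lemma 4.2]{Ish11}), the honest content of this step is purely the bookkeeping of a $\dim(X)$-shift: Proposition~\ref{fundamental_prop1} uses $(m+1)d$ while the definition of $\lambda_m$ uses $md$, and the difference is exactly $d$. I would therefore present the proof as essentially an unwinding of definitions once Proposition~\ref{fundamental_prop1} is granted, taking care only that the equality $\lambda_m(x)=\lambda(x)$ is invoked in the stable range where it holds, so that passing to the limit is legitimate and the limit on the right side trivially exists.

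The only point that deserves a remark — and the place where one might worry — is whether $\pi^{-1}(x)$ is nonempty and whether the dimension bookkeeping is affected when $x$ is a singular point lying on components of the arc space of various dimensions; but since $X$ is a variety (hence irreducible of pure dimension $d$) and $x\in X$ is a closed point, $\pi^{-1}(x)$ is a nonempty cylinder-like subset of $X_\infty$, its images $\psi_m(\pi^{-1}(x))$ are nonempty constructible subsets of $X_m$, and Lemma~\ref{lem_4.3} together with Lemma~\ref{lem_lambda} guarantees the relevant dimensions are finite and the sequence $\lambda_m(x)$ is monotone and eventually constant. So there is no real obstacle: the main (and essentially only) step is the index shift, and I would keep the proof to a few lines.
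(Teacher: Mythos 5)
Your proposal is correct and matches the paper's treatment: the paper offers no independent proof of Proposition \ref{fundamental_prop}, citing \cite[Lemma 4.2]{Ish11} for it just as for Proposition \ref{fundamental_prop1}, and the only content linking the two formulations is exactly your index shift $(m+1)d-\dim(\psi_m(\pi^{-1}(x)))=d+\lambda_m(x)$ together with the stabilization of $\lambda_m(x)$ guaranteed by Lemma \ref{lem_lambda}. Your bookkeeping is sound, so nothing further is needed.
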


\begin{rem}
\label{relation_smooth}
If $X$ is smooth and $x\in X$ is a closed variety, by Remark \ref{rem_smooth} we have $\lambda_m(x)=0$ for every $m\geq 0$. Hence, by Proposition \ref{fundamental_prop} we have $\mld(x;X)=\dim(X)$. The same conclusion holds if we only assume $x$ is a smooth point of $X$ because in this case we may replace $X$ by a smooth open neighborhood of $x$. Therefore, we only consider singular points in the following chapters.
\end{rem}

\section{Mather minimal log discrepancy of toric varieties}
\label{sec4}
This section is devoted to the computation of the Mather minimal log discrepancy associated to a closed point on a toric variety. We will first do the computation for a torus-invariant point, and then show the computation generalizes to an arbitrary closed point.

The computation depends only on local properties of the toric variety so we assume throughout the section that $X=X(\sigma)$ is an affine toric variety associated to a cone $\sigma$ over an algebraically closed field of characteristic zero.

We write $x_\sigma$ for the torus-invariant point in $X$ (when it exists), and therefore according to Proposition \ref{fundamental_prop}, computing the Mather minimal log discrepancy associated to $x_\sigma$ is equivalent to computing the dimension of $C^m:=\psi_m (\pi^{-1}(x_\sigma))$ for $m$ large enough. More precisely, $\mld (x_\sigma;X)=mn-\dim (C^m)$ when $m\gg 0$. We will decompose $C^m$ into orbits under the $T_m$-action, where $T$ is the torus in $X$, and compute the dimension of $C^m$ by computing the dimension of these orbits instead.

\subsection{Quick review}
$\\$

Let $k$ be an algebraically closed field of characteristic zero. An affine \emph{toric variety} of dimension $n$ is defined using a \emph{lattice} $N\cong\mathbb{Z}^n$ and a cone $\sigma$ in $N_\mathbb{R}:=N\otimes_\mathbb{Z} \mathbb{R}$. A \emph{cone} $\sigma$ is a rational convex cone in $N_\mathbb{R}$ containing no nonzero linear subspace and which is generated by finitely many lattice vectors.

Let $M:=\textnormal{Hom}_\mathbb{Z}(N,
\mathbb{Z})$ be the \emph{dual lattice} and we put $M_\mathbb{R}=M\otimes_\mathbb{Z}\mathbb{R}$. We denote by $\langle \cdot, \cdot \rangle$ the canonical pairing $M\times N\rightarrow \mathbb{Z}$. The affine toric variety associated to the cone $\sigma$ is defined as
\begin{equation*}
X(\sigma):=\Spec\ k[M\cap \sigma^\vee],
\end{equation*}
where $\sigma^\vee$ is the \emph{dual cone} contained in $M_\mathbb{R}$, i.e. $\sigma^\vee=\{u\in M_\mathbb{R} | \langle u,v\rangle\geq 0\textnormal{ for all }v\in \sigma\}$. The semigroup algebra $k[M\cap \sigma^\vee]$ is defined as $\underset{u\in M\cap \sigma^\vee}{\oplus} k\cdot \chi^u$, with $\chi^u\cdot \chi^v=\chi^{u+v}$. Then clearly for some elements $u_1,\ldots,u_s\in M\cap \sigma^\vee$, we have $\chi^{u_1},\ldots,\chi^{u_s}$ generate $k[M\cap \sigma^\vee]$ if and only if $u_1,\ldots,u_s$ generate $M\cap \sigma^\vee$ as a semigroup.

A $k$-valued point $x$ of $X(\sigma)$ corresponds to a homomorphism of $k$-algebras
\begin{equation*}
x^\ast:k[\sigma^\vee\cap M]\longrightarrow k.
\end{equation*}

We put $\sigma^\perp:=\{u\in M_\mathbb{R} | \langle u,v\rangle= 0\textnormal{ for all }v\in \sigma\}$. A \emph{face} of $\sigma$ is a subset of $\sigma$ of the form $\{v\in \sigma| \langle u,v\rangle=0\}$, for some $u\in\sigma^\vee$. The \emph{distinguished point} $x_\tau$ corresponding to a face $\tau$ of $\sigma$ is defined by $x_\tau^\ast(\chi^u)=1$ if $u\in \tau^\perp$, and $x_\tau^\ast(\chi^u)=0$ otherwise.

\begin{rem}
The point $x_\sigma$ exists if $N_\mathbb{R}$ is the linear span of $\sigma$. This point will play a special role in what follows. The computation when $N_\mathbb{R}$ is not spanned by $\sigma$ can be easily reduced to this case, since $X(\sigma)$ will be a product of a torus with a lower-dimensional toric variety that contains a torus-invariant point. So from now on, we assume that $\sigma$ spans $N_\mathbb{R}$.
\end{rem}

The toric variety $X=X(\sigma)$ contains the torus $T=\Spec\ k[M]\cong(k^\ast)^n$ and the group action of $T$ on itself extends to an action on $X$. More precisely, the $T$-action on $X$ is given by $G:T\times X\rightarrow X$, which is equivalent to the following morphism of $k$-algebras:
\begin{equation}
\label{eqn_action}
k[M\cap\sigma^\vee]\longrightarrow k[M]\otimes k[M\cap \sigma^\vee], \chi^u\longmapsto \chi^u\otimes\chi^u.
\end{equation}

It is a general fact that the $T$-orbit $O(\tau)$ that contains $x_\tau$ is of dimension equal to the codimension of $\tau$ in $\sigma$. In particular, the point $x_\sigma$ is the unique torus-invariant point. The toric variety $X$ is the disjoint union of the orbits $O(\tau)$ with $\tau$ varying over all faces of $\sigma$. Therefore, any point of $X$ lies in the same orbit with one of the $x_\tau$'s.

A torus-invariant prime divisor $D$ is the closure of the orbit associated to a one-dimensional face. Let us call this one-dimensional face $\tau$. Then we have
\begin{equation*}
D=V (\tau):=\Spec\ k[M\cap \sigma^\vee\cap \tau^\perp].
\end{equation*}

For more details on toric varieties, we refer the reader to \cite{Ful93}.

\subsection{Characterization of orbits in $C^m$}
\label{section_orbits}

\begin{defn}
\label{defn_Cm}
Recall that for each variety $X$ over $k$ there are canonical morphisms $\psi_m:X_\infty\rightarrow X_m$ and $\pi: X_\infty\rightarrow X$. For every $m\geq 1$, we define $C^m$, a subset of $X_m$, as
\begin{equation*}
C^m:=\psi_m(\pi^{-1}(x_\sigma)).
\end{equation*}
\end{defn}

Let $T$ be the torus in $X=X(\sigma)$. It follows from Remark \ref{jet_rems} (iii) that there is a natural group action of $T_m$ on $X_m$. In this subsection, we approximate $C^m$ by a union of $T_m$-orbits and show that these orbits can be represented by lattice points in the interior of $\sigma$. This characterization builds on the work of Ishii \cite{Ish03} who gave a similar description for the $T_\infty$-orbits in $X_\infty$. We denote by $\mathbb{Z}_{\geq 0}$ the set of nonnegative integers.

Let $\gamma :\Spec\ k[t]/(t^{m+1})\longrightarrow X$ be an $m$-jet inside $C^m$ and let $\delta:\Spec\ k[\![t]\!]\longrightarrow X$ be an arc on $X$ which lifts $\gamma$. Then we have the following commutative diagram:

$$\begin{tikzcd}[column sep=small]
k[M\cap \sigma^\vee]  \arrow{r}{\delta^\ast}  \arrow{rd}{\gamma^\ast}
  & k[\![t]\!]\arrow{d}{} \\
   & k[t]/(t^{m+1}),
\end{tikzcd}$$
where the vertical map is the canonical truncation.

Let $\tau_1,\ldots,\tau_d$ be the one-dimensional faces of $\sigma$ and $D_i:=V(\tau_i)$ be the corresponding torus-invariant prime divisors of $X$. We assume that $\delta$ is not in the arc space of any $D_i$. Equivalently, $\delta^\ast(\chi ^u)\neq 0$ for every $u\in M\cap\sigma^\vee$. Thus the order in $t$ of $\delta^\ast(\chi ^u)\in k[\![t]\!]$ is well-defined. We put $\ord_\delta(u):=\ord_t(\delta^\ast(\chi^u))$ for each $u\in M\cap \sigma^\vee$. Let $S_m$ be the set $\{0,1,\ldots,m,\infty\}$ and $Tr_m:\mathbb{Z}_{\geq 0} \rightarrow S_m$ be the obvious truncation map that takes any number larger than $m$ to $\infty$. We define $\ord_\gamma$ to be the composition of $\ord_\delta$ with $Tr_m$. Then we get the following commutative diagram:

$$\begin{tikzcd}[column sep=large]
M\cap \sigma^\vee  \arrow{r}{\ord_\delta}  \arrow{rd}{\ord_\gamma}
  & \mathbb{Z}_{\geq 0}\arrow{d}{Tr_m} \\
    & S_m.
\end{tikzcd}$$

Note that for each $u\in M\cap \sigma^\vee$, the value of $\ord_\gamma(u)$ only depends on $\gamma^\ast(\chi^u)$. Thus $\ord_\gamma$ is independent of choice of $\delta$ and we call it the order map of $\gamma$.

Since $\ord_\delta$ is an additive map that takes lattice points in the cone $\sigma^\vee$ to nonnegative integers, it corresponds uniquely to a lattice point in $\sigma$. Now we give a first description of some of the $T_m$-orbits of $C^m$.

\begin{lem}
\label{lem_orbits}
With the above notation, $\psi_m(X_\infty\backslash \underset{i}{\cup} (D_i)_\infty)$ is preserved by the $T_m$-action and its orbits are in one-to-one correspondence with the maps $M\cap \sigma^\vee\rightarrow S_m$ that can be lifted to additive maps $M\cap \sigma^\vee\rightarrow\mathbb{Z}_{\geq 0}$. The corresponding map is exactly the order map of any element in the orbit.

Moreover, $\psi_m(\pi^{-1}(x_\sigma)\backslash \underset{i}{\cup} (D_i)_\infty)$ is also preserved by the $T_m$-action and its orbits are in one-to-one correspondence with the maps $M\cap \sigma^\vee\rightarrow S_m$ that can be lifted to additive maps $M\cap \sigma^\vee \rightarrow \mathbb{Z}_{\geq 0}$, such that the inverse image of $\{0\}$ is $\{0\}$.
\end{lem}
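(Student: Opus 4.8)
The plan is to work directly with the description of $m$-jets of the affine toric variety $X = \Spec k[M \cap \sigma^\vee]$ in terms of homomorphisms $\gamma^*: k[M \cap \sigma^\vee] \to k[t]/(t^{m+1})$, and to use the explicit form of the $T_m$-action. First I would recall that a $T_m$-point is a homomorphism $k[M] \to k[t]/(t^{m+1})$ sending $\chi^u$ to a unit, i.e. to a power series with nonzero constant term, and that the action of $T_m$ on $X_m$ is induced dually by \eqref{eqn_action}, so it sends $\gamma^*(\chi^u)$ to $g^*(\chi^u)\cdot \gamma^*(\chi^u)$ where $g^*(\chi^u)$ is a unit in $k[t]/(t^{m+1})$. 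The key observation is that multiplying by a unit does not change the $t$-adic order: if $\gamma^*(\chi^u) = c_u t^{\ord_\gamma(u)} + (\text{higher})$ with $c_u \ne 0$, then the orbit of $\gamma$ consists exactly of those jets $\gamma'$ with the same order function $\ord_{\gamma'} = \ord_\gamma$ (here I interpret $\ord \in S_m$, with $\infty$ meaning the image is $0$ in $k[t]/(t^{m+1})$). That $T_m$-translation preserves $\psi_m(X_\infty \setminus \cup_i (D_i)_\infty)$ is then immediate, since the condition "$\delta^*(\chi^u) \ne 0$ for all $u$" — equivalently, $\ord_\gamma(u) \ne \infty$, i.e. the order function lifts to an \emph{additive} map $M \cap \sigma^\vee \to \mathbb{Z}_{\geq 0}$ — is preserved by multiplication by units.

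Next I would establish the two directions of the bijection between orbits and liftable order maps. For "well-defined", I have just argued that $\ord_\gamma$ is constant on $T_m$-orbits; and it lifts to an additive map into $\mathbb{Z}_{\geq 0}$ because, by hypothesis on $\delta$ avoiding the $(D_i)_\infty$, the function $u \mapsto \ord_\delta(u) = \ord_t(\delta^*(\chi^u))$ is finite-valued, nonnegative, and additive (using $\delta^*(\chi^{u+v}) = \delta^*(\chi^u)\delta^*(\chi^v)$ and that $k[\![t]\!]$ is a domain), and $\ord_\gamma = Tr_m \circ \ord_\delta$. For injectivity of orbit $\mapsto$ order map: if $\gamma, \gamma'$ have the same order function, I want to produce $g \in T_m$ with $g \cdot \gamma = \gamma'$; I would pick a semigroup generating set $u_1, \dots, u_s$ of $M \cap \sigma^\vee$, and solve for a homomorphism $g^*: k[M] \to k[t]/(t^{m+1})$ with $g^*(\chi^{u_j}) = \gamma'^*(\chi^{u_j})/\gamma^*(\chi^{u_j})$ — these quotients make sense and are units because $\gamma^*(\chi^{u_j})$ and $\gamma'^*(\chi^{u_j})$ have equal (finite, possibly after a remark about the $\infty$ case) orders, and one checks the assignment is compatible with the semigroup relations among the $u_j$ and hence extends to all of $M$. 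For surjectivity: given any additive $\varphi: M \cap \sigma^\vee \to \mathbb{Z}_{\geq 0}$, the jet $\gamma^*(\chi^u) := t^{\min(\varphi(u), m+1)}$ (with the convention $t^{m+1} = 0$) defines a genuine $m$-jet lying in the image of $\psi_m$ — it lifts to the arc $\delta^*(\chi^u) = t^{\varphi(u)}$ in $k[\![t]\!]$, which avoids every $(D_i)_\infty$ — and its order map is exactly $Tr_m \circ \varphi$; every liftable $S_m$-valued map arises this way. This uses the fact that additive maps $M \cap \sigma^\vee \to \mathbb{Z}_{\geq 0}$ correspond to lattice points of $\sigma$, already noted in the excerpt.

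For the second paragraph of the statement, I would intersect everything with $\pi^{-1}(x_\sigma)$. An arc $\delta$ lies over $x_\sigma$ iff $\delta^*(\chi^u)$ has zero constant term for every $u \in M \cap \sigma^\vee$ with $u \ne 0$ (since $x_\sigma^*(\chi^u) = 0$ for $u \notin \sigma^\perp = \{0\}$, the last equality because $\sigma$ spans $N_\mathbb{R}$, and $x_\sigma^*(\chi^0) = 1$). In terms of the order function this says precisely $\ord_\delta(u) \geq 1$ for all $u \ne 0$, i.e. the preimage of $\{0\}$ under $\ord_\delta$ — equivalently under $\ord_\gamma$, since being $0$ is detected at level $m \geq 1$ — is exactly $\{0\}$. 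This condition is again visibly $T_m$-invariant (multiplying by a unit with nonzero constant term preserves "constant term is zero"), and it cuts out exactly the claimed subfamily of order maps; the bijection of the first part restricts to the asserted bijection. The one point demanding care — and the main potential obstacle — is verifying that the candidate translating element $g$ in the injectivity argument genuinely defines a point of $T_m$, i.e. that setting $g^*(\chi^{u_j})$ to be the prescribed unit quotients respects \emph{all} $\mathbb{Z}$-linear relations among the $u_j$ (not merely the semigroup ones), so that $g^*$ extends from $M \cap \sigma^\vee$ to the group $M$; this follows because $k[M]$ is the localization of $k[M \cap \sigma^\vee]$ at the multiplicative set generated by the $\chi^{u_j}$, and units are automatically invertible there, but I would spell this out. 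Everything else is a routine unwinding of the affine jet-scheme description from the Example in Section~\ref{sec2} together with Ishii's orbit description in \cite{Ish03}.
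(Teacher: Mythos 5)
The serious gap is in your injectivity step, i.e.\ the claim that two $m$-jets in $\psi_m(X_\infty\backslash \cup_i (D_i)_\infty)$ with the same order map lie in one $T_m$-orbit. You set $g^\ast(\chi^{u_j})=\gamma'^\ast(\chi^{u_j})/\gamma^\ast(\chi^{u_j})$ and assert these quotients ``make sense and are units'' because the orders agree; but in $k[t]/(t^{m+1})$ one cannot divide by an element of positive order: if $\gamma^\ast(\chi^{u_j})=t^{a}v$ with $a\geq 1$, then $v$ is only determined modulo $t^{m+1-a}$, so the quotient is not a well-defined element of $k[t]/(t^{m+1})$, and in the $\infty$ case (both images zero, which does occur for semigroup elements with large pairing) there is no candidate at all. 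Nor can you repair this by dividing chosen arc liftings $\delta,\delta'$ of $\gamma,\gamma'$: their untruncated order functions only agree in degrees $\leq m$, so $\delta'^\ast(\chi^u)/\delta^\ast(\chi^u)$ need not be a unit of $k[\![t]\!]$ and hence need not define a point of $T_\infty$. The paper's proof gets around exactly this by interposing the special monomial jet $\gamma_\phi$, $\gamma_\phi^\ast(\chi^u)=t^{\phi(u)}$: for a jet $\gamma$ with lifting $\delta$ and $\psi=\ord_\delta$ one forms the monomial arc $\delta_\psi^\ast(\chi^u)=t^{\psi(u)}$, which lifts $\gamma_\phi$, and then $\alpha^\ast(\chi^u):=\delta^\ast(\chi^u)/\delta_\psi^\ast(\chi^u)$ is an honest unit of $k[\![t]\!]$ for every $u$, so $\alpha\in T_\infty$ and $\psi_m(\alpha)\cdot\gamma_\phi=\gamma$; any two jets with order map $\phi$ are then in the same orbit by transitivity through $\gamma_\phi$. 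Your localization remark only concerns extending an already-defined homomorphism from $k[M\cap\sigma^\vee]$ to $k[M]$, not the prior problem of defining it consistently; as written, the translating element $g$ is never constructed.

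A secondary point: your claim that $T_m$-invariance of $\psi_m(X_\infty\backslash \cup_i (D_i)_\infty)$ is ``immediate'' tacitly uses the equivalence ``a jet lies in this image if and only if its order map lifts to an additive map,'' which is false. For instance, on $\Spec\ k[x,y,z,w]/(xw-yz)$ the $1$-jet over the origin with all $x^{(1)},y^{(1)},z^{(1)},w^{(1)}$ nonzero but $x^{(1)}w^{(1)}\neq y^{(1)}z^{(1)}$ has order map liftable to an additive map, yet it does not lift to any arc of $X$, so it is not in $\psi_1(X_\infty)$. Membership in the set is an existence statement about arc liftings, so to act on it you must lift the group element: given $g\in T_m$, choose $\xi\in T_\infty$ with $\psi_m(\xi)=g$ (possible because $T$ is smooth) and note that $\xi\cdot\delta$ still avoids every $(D_i)_\infty$, respectively still lies over $x_\sigma$. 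This is how the paper argues, and it is an easy fix; the rest of your outline (constancy of the order map along orbits, surjectivity via $\gamma_\phi$ and $\delta_\psi$, and the restriction to $\pi^{-1}(x_\sigma)$ via $\ord^{-1}(0)=\{0\}$) does match the paper's proof.
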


\begin{proof}
First let us describe the $T_m$-action on $X_m$ and the $T_\infty$-action on $X_\infty$. Let
\begin{equation*}
g:\Spec\ k[t]/(t^{m+1})\rightarrow \Spec\ k[M]
\end{equation*}
be a point of $T_m$ and
\begin{equation*}
\gamma:\Spec\ k[t]/(t^{m+1})\rightarrow \Spec\ k[M\cap \sigma^\vee]
\end{equation*}
be a point of $X_m$. Then $g\cdot \gamma$ is a morphism $\Spec\ k[t]/(t^{m+1})\rightarrow \Spec\ k[M\cap \sigma^\vee]$ that is equal to $G\circ (g,\gamma)$. By equation (\ref{eqn_action}), for each $u\in M\cap \sigma$ we have
\begin{equation*}
(g\cdot \gamma)^\ast (\chi^u)=g^\ast(\chi^u)\cdot \gamma^\ast(\chi^u).
\end{equation*}
Similarly, if $\delta:\Spec\ k[\![t]\!]\rightarrow k[M]$ is a point in $T_\infty$ and $\alpha:\Spec\ k[\![t]\!]\rightarrow k[M\cap \sigma^\vee]$ is a point of $X_\infty$, then for each $u\in M\cap \sigma^\vee$ we have
\begin{equation*}
(\delta\cdot\alpha)^\ast(\chi^u)=\delta^\ast(\chi^u)\cdot \alpha^\ast(\chi^u).
\end{equation*}
Note that both $g^\ast(\chi^u)$ and $\delta^\ast(\chi^u)$ above are units since $\chi^u$ has an inverse $\chi^{-u}$ in $k[M]$.

Now let $\gamma$ be an $m$-jet in $\psi_m(X_\infty\backslash \underset{i}{\cup} (D_i)_\infty)$ with a lifting $\delta$ in $X_\infty\backslash \underset{i}{\cup} (D_i)_\infty$. For each $\alpha\in T_m$, there is a lifting $\xi\in T_\infty$ of $\alpha$ by smoothness of $T$. Since $\xi^\ast(\chi^u)$ is a unit for each $u\in M$, $(\xi\cdot \delta)^\ast(\chi^u)\neq 0$ for each $u\in M\cap \sigma^\vee$. Hence $\xi\cdot \delta$ is also in $X_\infty\backslash \underset{i}{\cup} (D_i)_\infty$. Therefore, $\alpha\cdot \gamma=\psi_m(\xi\cdot \delta)$ is in $\psi_m(X_\infty\backslash \underset{i}{\cup} (D_i)_\infty)$. This shows that $\psi_m(X_\infty\backslash \underset{i}{\cup} (D_i)_\infty)$ is preserved by the $T_m$-action.

For $\psi_m(\pi^{-1}(x_\sigma)\backslash\underset{i}{\cup} (D_i)_\infty)$, one applies the same argument and observes that an arc $\delta$ lies above the torus-invariant point $x_\sigma$ if and only if $\delta^\ast(\chi^u)$ has positive order whenever $u\neq 0$, which is equivalent to $\ord_\delta^{-1}(0)=\{0\}$. Since $\xi^\ast(\chi^u)$ is a unit for each $u\in M$ and $\xi\in T_\infty$, $\xi\cdot \delta$ also lies above $0$. This shows that $\psi_m(\pi^{-1}(x_\sigma)\backslash \underset{i}{\cup} (D_i)_\infty)$ is also preserved by the $T_m$-action.

Pick two $m$-jets $\alpha\in T_m$ and $\gamma\in \psi_m(X_\infty\backslash \underset{i}{\cup} (D_i)_\infty)$. The morphism $\alpha^\ast$ takes any $\chi^u$, with $u\in M$, to a unit. Therefore, multiplying $\gamma$ by $\alpha$ does not change the order map $\ord_\gamma$. In other words, $\ord_{\gamma}=\ord_{ \alpha\cdot\gamma}$. This shows that the order map is the same for all points in a $T_m$-orbit.

Now we show that two $m$-jets in $\psi_m(X_\infty\backslash \underset{i}{\cup} (D_i)_\infty)$ with the same order map are in the same $T_m$-orbit. Let $\gamma$ be in $\psi_m(X_\infty\backslash \underset{i}{\cup} (D_i)_\infty)$ and $\phi$ be its order map. We define the special $m$-jet $\gamma_\phi$ whose associated morphism is
\begin{equation*}
\gamma_\phi^\ast:k[M\cap \sigma^\vee]\longrightarrow k[t]/(t^{m+1}),\ \gamma_\phi^\ast(\chi^u)= t^{\phi(u)},
\end{equation*}
with the convention that $t^\infty=0$. If we write $\phi(a)+\phi(b)=\infty$ whenever the sum is $\geq m+1$, then we have $\phi(a)+\phi(b)=\phi(a+b)$ for any $a,b\in M\cap\sigma^\vee$. Therefore, $\gamma_\phi^\ast$ is a homomorphism of $k$-algebras.

Let $\delta\in X_\infty\backslash \underset{i}{\cup} (D_i)_\infty$ be a lifting of $\gamma$ and $\psi$ be the order map of $\delta$. Then we may define an arc $\delta_\psi$ such that
\begin{equation*}
\delta_\psi^\ast:k[M\cap \sigma^\vee]\longrightarrow k[\![t]\!],\ \delta_\psi^\ast(\chi^u)=t^{\psi(u)}.
\end{equation*}
Obviously, $\delta_\psi$ lifts $\gamma_\phi$, and it has the same order map as $\delta$. Hence we have a morphism of $k$-algebras as follows:
\begin{equation*}
\alpha^\ast:k[M\cap \sigma^\vee]\longrightarrow k[\![t]\!],\ \alpha^\ast(\chi^u)=\delta^\ast(\chi^u)/ \delta_\psi^\ast(\chi^u).
\end{equation*}
$\alpha^\ast$ extends to the entire $k[M]$ since $M\cap \sigma^\vee$ spans $M$ and since $\alpha^\ast(\chi^u)$ is a unit for each $u\in M\cap \sigma^\vee $. Hence $\alpha\in T_\infty$ and clearly we have $\alpha\cdot \delta_\psi=\delta$, and therefore, $\psi_m(\alpha)\cdot \gamma_\phi=\gamma$. This shows that $\gamma$ is in the same $T_m$-orbit as the special $m$-jet $\gamma_\phi$, and so is any other $m$-jet with the same order map.

Finally, we show that each map $\phi:M\cap \sigma^\vee\rightarrow S_m$ that can be lifted to an additive map $\psi: M\cap \sigma^\vee\rightarrow \mathbb{Z}_{\geq 0}$ is the order map of some $m$-jet in $\psi_m(X_\infty\backslash \underset{i}{\cup} (D_i)_\infty)$. Define the special $m$-jet $\gamma_\phi$ and the arc $\delta_\psi$ that lifts $\gamma_\phi$ in the same way as above. Then we have $\delta_\psi \in X_\infty\backslash\underset{i}{\cup} (D_i)_\infty$ because $\delta_\psi^\ast(\chi^u)$ has finite order for each $u\in M\cap \sigma^\vee$. Therefore, $\gamma_\phi$ is an $m$-jet in $\psi_m(X_\infty\backslash\underset{i}{\cup} (D_i)_\infty)$ and clearly $\ord_{\gamma_\phi}=\phi$. Hence we have produced a $T_m$-orbit whose corresponding order map is equal to the map $\phi$ that we started with.
\end{proof}

As mentioned above, an additive map $M\cap \sigma^\vee\rightarrow \mathbb{Z}_{\geq 0}$ corresponds uniquely to a lattice point in $\sigma$. Denote by $\varphi_a$ the additive map corresponding to the lattice point $a$ and $\bar{\varphi}_a$ the composition of $\varphi_a$ with the truncation map $Tr_m$. Then clearly every order map in Lemma \ref{lem_orbits} is equal to $\bar{\varphi}_a$ for some $a\in \sigma\cap N$. In particular, the order map takes only $0$ to $0$ if the lattice point $a$ is contained in $\Int(\sigma)$, the interior of $\sigma$. However, there could be more than one such $a$. To understand the additive maps $M\cap \sigma^\vee\rightarrow \mathbb{Z}_{\geq 0}$ better, we first study the semigroup $M\cap \sigma^\vee$ and show that there is a unique minimal set of generators.

\begin{defn}
An element $u\in M\cap \sigma^\vee$ is called \emph{irreducible} if it cannot be written as the sum of two nonzero elements of $M\cap \sigma^\vee$.
\end{defn}

\begin{lem}
The semigroup $M\cap\sigma^\vee$ has a unique minimal set of generators consisting of all the irreducible elements.
\end{lem}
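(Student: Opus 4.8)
The plan is to write $S:=M\cap\sigma^\vee$ and to exploit the standing hypothesis that $\sigma$ spans $N_\mathbb{R}$, which means that $\sigma^\vee$ is strongly convex (contains no nonzero linear subspace). The first step is to fix a lattice point $v\in\Int(\sigma)\cap N$ — such a point exists because $\Int(\sigma)$ is a nonempty open cone, hence contains a rational point, a suitable integer multiple of which is a lattice point still in $\Int(\sigma)$ — and to define a height function $h:S\to\mathbb{Z}_{\geq 0}$ by $h(u):=\langle u,v\rangle$. This $h$ is additive, and I claim $h(u)\geq 1$ for every $u\in S\setminus\{0\}$: if $u\neq 0$ satisfied $\langle u,v\rangle=0$, then since $v$ lies in the interior of $\sigma$ one would get $\langle u,w\rangle=0$ for every $w\in N_\mathbb{R}$, forcing $u=0$, a contradiction.

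Next I would show that every element of $S$ is a sum of irreducible elements, by induction on $h(u)$. If $u$ is irreducible (in particular if $u=0$, viewed as the empty sum, or if $h(u)=1$) there is nothing to prove; otherwise write $u=u_1+u_2$ with $u_1,u_2\in S\setminus\{0\}$. Then $h(u_i)\geq 1$ and $h(u_1)+h(u_2)=h(u)$, so $h(u_i)<h(u)$, and the inductive hypothesis expresses each $u_i$ as a sum of irreducibles, hence so is $u$. This proves that the set $\mathcal{I}$ of all irreducible elements generates $S$.

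Then I would show that $\mathcal{I}$ is contained in every generating set. Let $G$ be any set of generators of $S$; discarding $0$ if necessary, assume $0\notin G$. Given $u\in\mathcal{I}$, write $u$ as a nonempty sum of elements of $G$ with repetitions allowed. If this sum had at least two terms, we could split it into two nonempty sub-sums $u=u'+u''$; each of $u'$ and $u''$ is a nonempty sum of nonzero elements of $S$, hence has positive height by the first step, and in particular is nonzero — contradicting the irreducibility of $u$. So the sum has exactly one term, i.e. $u\in G$, and therefore $\mathcal{I}\subseteq G$. Combining with Gordan's lemma (which furnishes some finite generating set), $\mathcal{I}$ is finite; by the previous paragraph it generates $S$; and since every generating set contains $\mathcal{I}$, no proper subset of $\mathcal{I}$ generates. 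Hence $\mathcal{I}$ is the unique minimal set of generators of $S$.

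The argument is elementary once pointedness of $\sigma^\vee$ is in hand, and the only place that genuinely requires attention is the construction of the strictly positive additive height function $h$ — which is precisely where the hypothesis that $\sigma$ spans $N_\mathbb{R}$ is used; everything else reduces to the two short inductions above.
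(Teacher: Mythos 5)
Your proof is correct and follows essentially the same route as the paper's: fix a lattice point $v\in\Int(\sigma)\cap N$, use the strictly positive additive pairing $u\mapsto\langle u,v\rangle$ as a height to show by induction that irreducibles generate, and observe that every generating set must contain all irreducibles. You are slightly more explicit than the paper about the existence of $v$, the positivity of the height (via pointedness of $\sigma^\vee$), and the minimality/uniqueness step, but these are elaborations of the same argument rather than a different approach.
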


\begin{proof}
First, since $\sigma^\vee$ is a convex polyhedral cone, $M\cap \sigma^\vee$ is finitely generated. Therefore, there exists a minimal set of generators.

Second, we show that any element of $M\cap\sigma^\vee$ can be generated by irreducible elements. Pick an element $v\in \Int(\sigma)\cap N$. Then $\langle u,v\rangle$ is a positive integer for any $u\in M\cap \sigma^\vee$. We claim that for each $u\in M\cap \sigma^\vee$, $u$ can be written as the sum of at most $\langle u,v\rangle$ irreducible elements. If $\langle u,v\rangle=1$, then $u$ must irreducible. Otherwise, there are nonzero elements $u_1,u_2\in M\cap \sigma^\vee$ such that $u=u_1+u_2$. But $\langle u_1,v\rangle$ and $\langle u_2,v\rangle$ are both positive integers since $v\in \Int(\sigma)\cap N$. This is not possible as they add up to $\langle u,v\rangle=1$. Inductively, suppose our claim holds for all $u$ such that $\langle u,v\rangle\leq p$. Pick $u\in M\cap \sigma^\vee$ such that $\langle u,v\rangle=p+1$. If $u$ is irreducible, then we are done. Otherwise, there are nonzero elements $u_1,u_2\in M\cap \sigma^\vee$ such that $u=u_1+u_2$. Both $\langle u_1,v\rangle$ and $\langle u_2,v\rangle$ are $\leq p$. By assumption, $u_1$ and $u_2$ can be written as the sum of at most $\langle u_1,v\rangle$ and $\langle u_2,v\rangle$ irreducible elements respectively. Therefore, $u$ can be written as the sum of at most $\langle u_1,v\rangle+\langle u_2,v\rangle=p+1$ irreducible elements.

Finally, note that any set of generators must contain all irreducible elements by definition. We conclude that the set of irreducible elements form the unique minimal set of generators for $M\cap \sigma^\vee$.
\end{proof}

\begin{rem}
If $u_1,\ldots,u_s$ form the unique minimal set of generators of $M\cap \sigma^\vee$, then $\chi^{u_1},\ldots,\chi^{u_s}$ also form the unique minimal set of monomial generators of $k[M\cap \sigma^\vee]$.
\end{rem}

The following lemma makes a connection between the set of order maps and the set of lattice points.

\begin{lem}
\label{lem_compactness}
Fix an integer $m\geq 1$ and let $\chi^{u_1},\chi^{u_2},\ldots,\chi^{u_s}$ be the minimal set of monomial generators of $k[M\cap\sigma^\vee]$. For each integer $c\geq 0$ we define
\begin{equation*}
P_c:=\Big\{ a\in \sigma\cap N \Big |\textnormal{the set}\ \{u_i|\varphi_a(u_i)\leq m+c \}\ \textnormal{spans}\ M_{\mathbb{R}}\Big\}.
\end{equation*}
Then the following hold:\\
(1) For any two different $a,b\in P_0$, $\bar{\varphi}_a\neq \bar{\varphi}_b$.\\
(2) There exists some $c_0\in \mathbb{Z}^+$ such that for any $a\in \sigma\cap N$ one can find $b\in P_{c_0}$ with $\bar{\varphi}_a= \bar{\varphi}_b$.
\end{lem}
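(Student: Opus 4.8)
The plan is to treat the two parts essentially independently, each time exploiting the fact that for $a\in\Int(\sigma)\cap N$ the pairing $\langle a,u_i\rangle=\varphi_a(u_i)$ is a positive integer, and more generally that $\varphi_a(u_i)$ is controlled linearly in $a$.

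For part (1): suppose $a,b\in P_0$ with $\bar\varphi_a=\bar\varphi_b$; I want to conclude $a=b$. Since $a\in P_0$, the generators $u_i$ with $\varphi_a(u_i)\le m$ span $M_\mathbb{R}$; for each such $u_i$ we have $\varphi_a(u_i)\le m$, hence $\bar\varphi_a(u_i)=\varphi_a(u_i)$, and likewise whenever $\bar\varphi_a(u_i)\ne\infty$. The equality $\bar\varphi_a=\bar\varphi_b$ then forces $\varphi_b(u_i)=\bar\varphi_b(u_i)=\bar\varphi_a(u_i)=\varphi_a(u_i)$ for every $u_i$ in this spanning set (note if $\bar\varphi_a(u_i)\le m$ then the common value is finite, so $\varphi_b(u_i)$ is that same finite number). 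Thus $\langle a,u_i\rangle=\langle b,u_i\rangle$ for a collection of $u_i$'s spanning $M_\mathbb{R}$, which pins down $a=b$ in $N_\mathbb{R}$, hence in $N$. So part (1) is a short linear-algebra argument once the bookkeeping on $\infty$ versus finite values is done carefully.

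For part (2): the idea is that if $a$ is "deep" in the cone, i.e. all of $\langle a,u_i\rangle$ are large, then one can shrink $a$ toward the boundary without changing the truncated order map. Concretely, I would argue by a descent/well-foundedness argument on, say, $\sum_i \min\{\varphi_a(u_i),m+1\}$ or on $\langle a,v\rangle$ for a fixed $v\in\Int(\sigma^\vee)\cap M$. If $a\notin P_{c_0}$ for a suitable $c_0$ — meaning the $u_i$ with $\varphi_a(u_i)\le m+c_0$ fail to span $M_\mathbb{R}$ — then those "small" generators all lie in a proper rational hyperplane $H\subset M_\mathbb{R}$. Pick a lattice vector $w\in N$, $w\ne 0$, with $\langle w,u\rangle=0$ for all $u\in H\cap M$ (the annihilator of $H$); by the defining property of a cone, $\sigma$ is strongly convex so not all $\langle a\pm w, u_i\rangle$ can stay nonnegative, but for a suitable sign we can pass to $a'=a-w$ (or $a+w$) staying in $\sigma\cap N$ while strictly decreasing the descent quantity, and without altering $\varphi_a(u_i)$ on any generator with $\varphi_a(u_i)\le m+c_0-C$ for a constant $C$ bounding $|\langle w,u_i\rangle|$. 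Iterating, in finitely many steps we reach some $b\in P_{c_0}$; the only thing to check is that along the way $\bar\varphi$ on the truncation-relevant generators is preserved, which is why one needs a uniform $c_0$ (chosen larger than the maximal $|\langle w,u_i\rangle|$ over the finitely many relevant $w$'s and generators) rather than $c_0$ just slightly bigger than $0$.

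The main obstacle is part (2): producing the descent vector $w$ and verifying that subtracting it keeps us inside $\sigma\cap N$ while genuinely decreasing a well-founded quantity and not disturbing the truncated order map. The delicate point is the interaction of three finite bounds — the number of generators $u_i$, the gap $c_0$, and the size of the perturbation $\langle w,u_i\rangle$ — which must be chosen in the right order so that generators with small $\varphi_a$ value have their values frozen throughout the iteration. Parts of this can be streamlined by first reducing to the simplicial case or by invoking that $\{a\in\sigma\cap N : \varphi_a(u_i)\le m+c \text{ for all } i\}$ is finite, but some version of the hyperplane/annihilator construction seems unavoidable.
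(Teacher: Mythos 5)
Your part (1) is exactly the paper's argument and is fine. Your strategy for part (2) is also essentially the paper's: perturb $a$ by a lattice vector annihilating the span of the generators with small value, with $c_0$ chosen uniformly over the finitely many non-spanning subsets of $\{u_1,\dots,u_s\}$, large enough compared with the pairings of these perturbation vectors against the generators, so that the truncated order map and membership in $\sigma\cap N$ are preserved. The gap is in your termination argument. The quantity $\sum_i\min\{\varphi_a(u_i),m+1\}$ cannot serve as a descent quantity: by design your moves leave every value $\le m+c_0$ untouched and keep every value $>m+c_0$ strictly above $m$, and since all values are integers, each term $\min\{\varphi_a(u_i),m+1\}$ is literally unchanged by a legal move, so the sum is constant along the whole process. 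The alternative quantity $\langle a,v\rangle$ for a fixed $v\in\Int(\sigma^\vee)\cap M$ need not decrease either, because nothing prevents the perturbation vector $w$ from being orthogonal to $v$; to make this work you would have to fix one $w$ per non-spanning subset (so that only finitely many occur) and then choose $v$ off the finitely many hyperplanes $w^{\perp}$, a choice your sketch does not make. Finally, the assertion that strong convexity forces one of $\langle a\pm w,u_i\rangle$ to become negative in a single step is false: when $a$ is deep in $\sigma$ both $a+w$ and $a-w$ lie in $\sigma$; strong convexity only excludes an entire line, which by itself gives no termination.

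The paper sidesteps all of this by not moving one step at a time. Having fixed, for each non-spanning subset $S$, a vector $v$ with $\varphi_v|_S=0$ and $1\le\max_{u_i\notin S}\varphi_v(u_i)\le c_0$, it replaces $b$ by $b-kv$, where $k$ is the first multiple for which some generator outside $S_b:=\{u_i\mid\varphi_b(u_i)\le m+c_0\}$ acquires a value $\le m+c_0$; the bound by $c_0$ guarantees that at this $k$ all values outside $S_b$ are still $>m$, so $\bar\varphi_{b-kv}=\bar\varphi_b$ and $b-kv\in\sigma\cap N$, while $S_{b-kv}\supsetneq S_b$. Since $S_b$ is a subset of the finite set of generators, choosing $b$ with $S_b$ maximal (equivalently, iterating at most $s$ times) terminates the process. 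This strictly increasing finite set is the well-founded quantity your sketch is missing; with it, your argument becomes the paper's.
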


\begin{proof}
First let's assume we have $a,b\in P_0$ and $\bar{\varphi}_a= \bar{\varphi}_b$. Define
\begin{equation*}
\Gamma_0:=\{u_i|\varphi_a(u_i)\leq m\}.
\end{equation*}

Since $\bar{\varphi}_a=\bar{\varphi}_b$, we deduce that $\varphi_a$ and $\varphi_b$ take the same values on $\Gamma_0$. By definition of $P_0$, $\Gamma_0$ spans $M_\mathbb{R}$. Thus we conclude that $\varphi_a=\varphi_b$, which implies that $a=b$.\\

For (2), we choose a positive integer $c_0$ large enough such that for any subset $S\subset \{u_1,u_2,\ldots,u_s\}$ that does not span $M_\mathbb{R}$, there is some $v\in N$ satisfying
\begin{equation}
\label{eqn_2.5}
\varphi_v(u_i)=0,\textnormal{ for all } u_i\in S,\ \textnormal{and }1\leq \max_{u_i\notin S}\{\varphi_v(u_i)\}\leq c_0.
\end{equation}

Such a number $c_0$ exists because there are only finitely many subsets of $\{1,2,\ldots,s\}$. For each point $b\in \sigma\cap N$ we put $S_b:=\big\{u\in \{u_1,\ldots,u_s\}|\varphi_b(u)\leq m+c_0\big\}$. If there is some $b$ such that $\bar\varphi_a=\bar\varphi_b$ and such that $S_b$ spans $M_\mathbb{R}$, then we are done.

Now suppose there is no such $b$. We pick a point $b$ such that $\bar\varphi_a=\bar\varphi_b$ and such that $S_b$ is maximal. By relabeling we may write $S_b=\{u_1,\ldots,u_l\}$ for some integer $l<s$. By assumption $S_b$ does not span $M_\mathbb{R}$, so we can fine $v\in N$ that satisfies (\ref{eqn_2.5}) with $S$ replaced by $S_b$. Clearly there is some positive integer $k$ such that
\begin{equation*}
\varphi_{b-kv}(u_i) >m,\textnormal{ for all } i>l,
\end{equation*}
\begin{equation*}
\varphi_{b-kv}(u_{i_0}) \leq m+c_0,\textnormal{ for some } i_0>l.
\end{equation*}
Notice that $\bar{\varphi}_{b-kv}=\bar{\varphi}_b=\bar\varphi_a$, and hence $b-kv\in \sigma\cap N$. But clearly we have
\begin{equation*}
S_b\subsetneqq\{u_1,\ldots,u_l,u_{i_0}\}\subset S_{b-kv}.
\end{equation*}
This contradicts the maximality of $S_b$. So we conclude that there must be some $b\in P_{c_0}$ such that $\bar\varphi_a=\bar\varphi_b$.
\end{proof}

\begin{rem}
We have proved that for each $a\in \sigma\cap N$, the map $\bar{\varphi}_a$ corresponds to a $T_m$-orbit in $\psi_m(X_\infty\backslash \cup_i (D_i)_\infty)$. We denote this orbit by $T_{m,a}$.
\end{rem}

\begin{rem}
For each $a\in \Int(\sigma)\cap N$, $\varphi_a$ is an additive map $M\cap \sigma^\vee\rightarrow\mathbb{Z}_{\geq 0}$ such that $\varphi_a^{-1}(0)=\{0\}$. According to Lemma \ref{lem_orbits} the corresponding orbits $T_{m,a}$ are all the $T_m$-orbits contained in $\psi_m(\pi^{-1}(x_\sigma)\backslash \cup_i (D_i)_\infty)$.
\end{rem}

\begin{cor}
\label{cor_finiteness}
The sets $\psi_m(X_\infty\backslash \underset{i}{\cup} (D_i)_\infty)$ and $\psi_m(\pi^{-1}(x_\sigma)\backslash \underset{i}{\cup} (D_i)_\infty)$ contain only finitely many $T_m$-orbits.
\end{cor}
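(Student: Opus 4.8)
The plan is to package the statement as a short consequence of the two preceding lemmas, the real content being the finiteness of the set $P_{c_0}$ from Lemma~\ref{lem_compactness}. First I would recall the dictionary set up above: by Lemma~\ref{lem_orbits} together with the two remarks following it, the $T_m$-orbits of $\psi_m(X_\infty\backslash\cup_i (D_i)_\infty)$ are in bijection (via the order map) with the maps $\bar{\varphi}_a\colon M\cap\sigma^\vee\to S_m$ as $a$ runs over $\sigma\cap N$, and the $T_m$-orbits of $\psi_m(\pi^{-1}(x_\sigma)\backslash\cup_i(D_i)_\infty)$ are exactly those $\bar{\varphi}_a$ with $a\in\Int(\sigma)\cap N$. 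Since $\pi^{-1}(x_\sigma)\backslash\cup_i(D_i)_\infty\subseteq X_\infty\backslash\cup_i(D_i)_\infty$ and both images are unions of $T_m$-orbits, the orbits occurring in the second set form a subset of those occurring in the first; hence it suffices to bound the number of $T_m$-orbits in $\psi_m(X_\infty\backslash\cup_i(D_i)_\infty)$, i.e.\ to bound the number of distinct maps $\bar{\varphi}_a$ with $a\in\sigma\cap N$.

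Next I would invoke Lemma~\ref{lem_compactness}(2): for the fixed integer $c_0$ produced there, every $a\in\sigma\cap N$ admits some $b\in P_{c_0}$ with $\bar{\varphi}_a=\bar{\varphi}_b$. Therefore the set $\{\bar{\varphi}_a : a\in\sigma\cap N\}$ is the image of $P_{c_0}$ under the assignment $b\mapsto\bar{\varphi}_b$, and in particular the number of $T_m$-orbits of $\psi_m(X_\infty\backslash\cup_i(D_i)_\infty)$ is at most the cardinality of $P_{c_0}$.

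It then remains to check that $P_{c_0}$ is finite, which is the only step needing a little care. By definition $P_{c_0}$ is the union, over the finitely many subsets $S\subseteq\{u_1,\dots,u_s\}$ which span $M_\mathbb{R}$, of the sets $Q_S:=\{a\in\sigma\cap N : \langle a,u\rangle\le m+c_0\ \text{for all}\ u\in S\}$. Fix such an $S$. Because $S$ spans $M_\mathbb{R}$, the linear map $N_\mathbb{R}\to\mathbb{R}^{|S|}$ sending $a$ to $(\langle a,u\rangle)_{u\in S}$ is injective, so the preimage of any bounded set is bounded. Since $a\in\sigma$ forces $\langle a,u\rangle\ge 0$ for every $u\in\sigma^\vee$, and the $u\in S$ lie in $M\cap\sigma^\vee$, the set $Q_S$ is contained in the preimage of the box $[0,m+c_0]^{|S|}$; hence $Q_S$ is bounded, and $Q_S\subseteq N$ is therefore finite. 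This is exactly the place where strong convexity of $\sigma$ enters: it guarantees that $\sigma^\vee$ (and hence the collection of generators $u_1,\dots,u_s$) spans $M_\mathbb{R}$, so that spanning subsets $S$ exist and their defining inequalities genuinely pin $a$ down to a compact region. Summing over the finitely many admissible $S$ shows $P_{c_0}$ is finite, and we conclude that both $\psi_m(X_\infty\backslash\cup_i(D_i)_\infty)$ and $\psi_m(\pi^{-1}(x_\sigma)\backslash\cup_i(D_i)_\infty)$ contain only finitely many $T_m$-orbits.

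Apart from this last observation the proof is pure bookkeeping on top of Lemmas~\ref{lem_orbits} and~\ref{lem_compactness}, so the main (mild) obstacle is simply remembering to use that $\sigma$ contains no line, so that the bounding functionals $\langle\cdot,u\rangle$, $u\in S$, really do cut out a bounded — hence lattice-finite — set.
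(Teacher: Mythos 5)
Your proposal is correct and follows essentially the same route as the paper: reduce via Lemma~\ref{lem_orbits} to counting order maps $\bar{\varphi}_a$, use Lemma~\ref{lem_compactness}(2) to restrict to $a\in P_{c_0}$, and show $P_{c_0}$ is finite by writing it as a finite union over spanning subsets of $\{u_1,\ldots,u_s\}$ and bounding each piece. The only (cosmetic) difference is the final boundedness step, where you use injectivity of $a\mapsto(\langle a,u\rangle)_{u\in S}$ and the box $[0,m+c_0]^{|S|}$, whereas the paper bounds the coefficients of $a$ in terms of generators of $\sigma\cap N$.
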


\begin{proof}
According to Lemma \ref{lem_orbits}, we just need to show there are finitely many order maps $\bar\varphi_a$ for $a\in \sigma\cap N$. By Lemma \ref{lem_compactness}, there is a positive integer $c_0$ such that every order map is equal to $\bar\varphi_a$ for some $a\in P_{c_0}$. Therefore, it suffices to show that $P_{c_0}$ is compact.

For any $u_{j_1},\ldots,u_{j_n}\subset \{u_1,\ldots,u_s\}$ that span $M_\mathbb{R}$, we define
\begin{equation*}
K_{j_1,j_2,\ldots,j_n}:= \{ a\in \sigma\cap N|\varphi_a(u_{j_i})\leq m+c_0\textnormal{ for }1\leq i\leq n\}.
\end{equation*}
Then $P_{c_0}$ is the union of all $K_{j_1,\ldots,j_n}$ as $(j_i)_{1\leq i\leq n}$ varies such that $u_{j_1},\ldots,u_{j_n}$ span $M_\mathbb{R}$. Since this is a finite union, it suffices to show that each $K_{j_1,\ldots,j_n}$ is compact.

By relabeling let us assume that $j_i=i$ for $1\leq i\leq n$. Let $v_1,\ldots,v_l$ be a minimal set of generators of $\sigma\cap N$. Since $u_1,\ldots,u_n$ span $M_\mathbb{R}$, for each $v_i$ there exists some $u_j$ with $1\leq j\leq n$ such that $\langle v_i,u_j\rangle$ is a positive integer. Therefore,
\begin{equation*}
K_{1,2,\ldots,n}\subset \{a\in \sigma\cap N|a=\sum_{i=1}^l c_i v_i,\textnormal{ with }0\leq c_i\leq m+c_0\textnormal{ for each } i\}.
\end{equation*}
This shows that $K_{1,2,..,n}$ is compact.
\end{proof}

\begin{rem}
The structure of the jet schemes of toric varieties is in general very hard to describe unlike the case of arc spaces. One can find a description of jet schemes of toric surfaces in \cite{Mou11}. Instead of the entire jet schemes, we only describe the structure of images of the arc space in the $m^{\textnormal{th}}$ jet scheme.
\end{rem}

\subsection{Main results}
\label{section_dimension}
$\\$
In this subsection we compute the dimension of the orbit $T_{m,a}$ by computing the dimension of the corresponding stabilizer. Denote by $H_{m,a}$ the stabilizer of any element of $T_{m,a}$ under the $T_m$-action. We start with the following lemma.

\begin{lem}
\label{lem_finiteness}
Let $u_1,\ldots,u_n\in M$ be elements that generate $M_{\mathbb{R}}$ over $\mathbb{R}$. For every $a_{i,j}\in k$ with $1\leq i\leq n$ and $0\leq j\leq m$ such that $a_{i,0}\neq 0$ for all $i$, the set of elements $\alpha\in T_m$ such that
\begin{equation}
\label{eqn_4}
\alpha^\ast(\chi^{u_i})=\sum_{j=0}^m a_{i,j}t^j\textnormal{ for }1\leq i\leq n
\end{equation}
is nonempty and finite.
\end{lem}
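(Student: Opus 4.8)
The plan is to recognize the assignment $\alpha\mapsto(\alpha^\ast(\chi^{u_1}),\dots,\alpha^\ast(\chi^{u_n}))$ as the map on $m$-jet schemes induced by an isogeny of tori, and then to read off its fibers from Lemma \ref{lem_etale}.

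First I would introduce the morphism. Since each $u_i$ lies in $M$, the character $\chi^{u_i}\in k[M]$ is a unit and defines a homomorphism $T\to\mathbb{G}_m$; assembling these gives a homomorphism of $n$-dimensional tori
\[
\phi=(\chi^{u_1},\dots,\chi^{u_n})\colon T=\Spec k[M]\longrightarrow \mathbb{G}_m^n=\Spec k[\mathbb{Z}^n],
\]
whose comorphism carries the $i$-th coordinate character of $\mathbb{G}_m^n$ to $\chi^{u_i}$; equivalently, on character lattices $\phi$ is dual to the $\mathbb{Z}$-linear map $\mathbb{Z}^n\to M$ sending the $i$-th basis vector to $u_i$. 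The hypothesis that $u_1,\dots,u_n$ span $M_\mathbb{R}$ says precisely that this lattice map is injective with finite cokernel, so $\phi$ is a surjective isogeny --- in particular finite, surjective, and (since the characteristic is zero) \'etale.

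Next I would identify the relevant fiber. A $k$-point $\alpha$ of $T_m$ is a $k$-algebra homomorphism $\alpha^\ast\colon k[M]\to k[t]/(t^{m+1})$, and a $k$-point of $(\mathbb{G}_m^n)_m$ is the same thing as an $n$-tuple of units of $k[t]/(t^{m+1})$; the polynomial $\sum_{j=0}^m a_{i,j}t^j$ is a unit exactly when $a_{i,0}\neq0$, so the data $(a_{i,j})$ appearing in the statement is precisely a $k$-point $\beta$ of $(\mathbb{G}_m^n)_m$. Since $(\phi_m(\alpha))^\ast$ sends the $i$-th coordinate character of $\mathbb{G}_m^n$ to $\alpha^\ast(\chi^{u_i})$, the set of $\alpha\in T_m$ satisfying (\ref{eqn_4}) is exactly the fiber $\phi_m^{-1}(\beta)$. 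Because $\phi$ is \'etale, Lemma \ref{lem_etale} shows the square relating $T_m$, $(\mathbb{G}_m^n)_m$, $T$, and $\mathbb{G}_m^n$ (horizontal maps $\phi_m$ and $\phi$, vertical maps the canonical truncations $\pi_m$) is Cartesian, so the fiber of $\phi_m$ over $\beta$ is identified with the fiber of $\phi$ over $\overline\beta:=\pi_m(\beta)=(a_{1,0},\dots,a_{n,0})\in\mathbb{G}_m^n$; that is, $\phi_m^{-1}(\beta)\cong\phi^{-1}(\overline\beta)$. As $\phi$ is finite and surjective, $\phi^{-1}(\overline\beta)$ is a nonempty finite set, which is the assertion.

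The step that needs care --- rather than a genuine obstacle --- is the translation of the spanning hypothesis into the statement that $\phi$ is a finite \'etale surjection: one checks that $u_1,\dots,u_n$ span $M_\mathbb{R}$ iff the matrix expressing them in a $\mathbb{Z}$-basis of $M$ has nonzero determinant, iff $\mathbb{Z}^n\to M$ is injective with finite cokernel, and then invokes the anti-equivalence between tori and lattices together with the fact that isogenies of tori are \'etale in characteristic zero. If one prefers to avoid this language, the same two conclusions can be obtained by hand: after choosing a $\mathbb{Z}$-basis of $M$, writing $u_i=\sum_j b_{ij}e_j$ and $\alpha^\ast(\chi^{e_j})=\sum_l y_{j,l}t^l$, the constant-term equations $\prod_j y_{j,0}^{b_{ij}}=a_{i,0}$ have at least one, and only finitely many, solutions in $(k^\ast)^n$ (their number being $|\det(b_{ij})|$), while once the $y_{j,0}$ are fixed the higher-order coefficients $y_{j,l}$ (for $l\geq1$) are uniquely determined by solving the residual multiplicative system over $t\,k[t]/(t^{m+1})$ via the formal logarithm and exponential, using that $(b_{ij})$ is invertible over $\mathbb{Q}$.
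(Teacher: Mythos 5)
Your proof is correct and follows essentially the same route as the paper: the paper also passes to the subtorus $T'=\Spec k[M']$ associated to the sublattice generated by $u_1,\ldots,u_n$ (your $\mathbb{G}_m^n$, identified via the basis $u_1,\ldots,u_n$), notes that $T\to T'$ is finite \'etale in characteristic zero, and uses Lemma \ref{lem_etale} to identify the relevant set with a fiber of this finite surjective map, hence nonempty and finite. The only differences are cosmetic: you spell out the lattice-theoretic verification and an optional hands-on alternative, while the paper verifies \'etaleness by choosing a basis $w_1,\ldots,w_n$ of $M$ with $d_1w_1,\ldots,d_nw_n$ a basis of $M'$.
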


\begin{proof}
Consider the subgroup $M'$ of $M$ generated by $u_1,\ldots,u_n$ and the corresponding torus $T'=\Spec\ k[M']$. Note that we have an induced morphism $f\colon T\to T'$. It is well-known that in characteristic $0$, this map is finite and \'{e}tale. This follows, for example, by choosing a basis $w_1,\ldots,w_n$ of $M$ such that $d_1w_1,\ldots,d_nw_n$ is a basis of $M'$, for some positive integers $d_1,\ldots,d_n$. In this case, it follows from Lemma \ref{lem_etale} that
\begin{equation*}
T_m\simeq T'_m\times_{T'}T.
\end{equation*}
In particular, the induced morphism $T'_m\to T_m$ is finite and \'{e}tale and its fibers are non-empty and finite. Since it is clear that there is a unique $\beta\in T'_m$ such that $\beta^*(\chi^{u_i})=\sum_{j=0}^ma_{i,,j}t^j$ for all $i$, we deduce the assertion in the lemma.
\end{proof}

\begin{defn}
\label{defn_ph}
For each $a\in \Int(\sigma)\cap N$, we define
\begin{equation}
\label{eqn_ph}
\Phi(a):= \min \big\{\sum_{i=1}^n \langle a,u_{i} \rangle | u_{1},\ldots,u_{n}\ \textnormal{span}\  M_\mathbb{R},\textnormal{ with }u_i\in M\cap \sigma^\vee\textnormal{ for each } i\big\},
\end{equation}
where the minimum is run over all linearly independent sets of vectors $\{u_1,\ldots,u_n\}$ in $M\cap \sigma^\vee$.
\end{defn}

\begin{rem}
\label{rem_ph}
Clearly if the minimum in (\ref{eqn_ph}) is attained at some elements $u_1,\ldots,u_n$, each $u_i$ must be irreducible. We show in the following one way to find elements $u_1,\ldots,u_n$ at which the above minimum is achieved.
\end{rem}

Fix $a\in \Int(\sigma)\cap N$. Let $u_1,\ldots, u_s$ be the minimal set of generators of the semigroup $M\cap \sigma^\vee$ and let $S_0:=\{u_1,\ldots,u_s\}$. We first choose $u_{j_1}\in S_0$ such that $\varphi_a(u_{j_1})=\langle a, u_{j_1}\rangle$ is minimal and define $S_1:=S_0\backslash \textnormal{Span}(u_{j_1})$. Recursively, for each $1\leq i\leq n-1$, assuming $u_{j_1},\ldots, u_{j_i}$ are chosen and $S_i=S_0\backslash \textnormal{Span}(u_{j_1},\ldots,u_{j_{i}})$, we choose $u_{j_{i+1}} \in S_i$ such that $\varphi_a(u_{j_{i+1}})=\langle a, u_{j_{i+1}}\rangle$ is minimal and define $S_{i+1}:=S_0\backslash \textnormal{Span}(u_{j_1},\ldots,u_{j_{i+1}})$. Once $u_{j_1},\ldots, u_{j_n}$ are all chosen, it is clear that they span $M_\mathbb{R}$.

\begin{lem}
\label{lem_ph}
For each $a\in \Int(\sigma)\cap N$ and $u_{j_1},\ldots, u_{j_n}$ chosen as above, we have
\begin{equation*}
\sum_{k=1}^n \langle a,u_{j_k} \rangle=\Phi(a).
\end{equation*}
\end{lem}

\begin{proof}
By Remark \ref{rem_ph} we can find $i_1,\ldots, i_n$ such that $u_{i_1},\ldots, u_{i_n}$ span $M_\mathbb{R}$ and they compute $\Phi(a)$. If the set $\{u_{i_1},\ldots, u_{i_n}\}$ is equal to $\{u_{j_1},\ldots, u_{j_n}\}$, the claim in the lemma follows immediately. Hence we assume that by relabeling, there exists some $k$, with $1\leq k\leq n$, such that $i_1=j_1,\ldots, i_{k-1}=j_{k-1}$ and $i_k\neq j_k$. If $k=n$, we have $\langle a,u_{j_n}\rangle \leq \langle a,u_{i_n}\rangle$ by the choice of $u_{j_n}$. Hence
\begin{equation*}
\sum_{k=1}^n \langle a,u_{j_k} \rangle\leq \sum_{k=1}^n \langle a,u_{i_k}\rangle.
\end{equation*}
This proves the claim in the lemma.

Now suppose the conclusion holds when $k>k_0$ for some $k_0<n$, and we consider the case when $k=k_0$. We claim there exists some $l$, with $k_0\leq l\leq n$, such that
\begin{equation*}
u_{j_{k_0}}\not\in \textnormal{Span}(u_{i_1},\ldots, \hat{u}_{i_l},\ldots, u_{i_n}).
\end{equation*}
Otherwise, we have
\begin{eqnarray*}
u_{j_{k_0}}&\in& \bigcap_{l=k_0}^n \textnormal{Span}(u_{i_1},\ldots, \hat{u}_{i_l},\ldots, u_{i_n})\\
&=& \textnormal{Span}(u_{i_1},\ldots, u_{i_{k_0-1}})\\
&=& \textnormal{Span}(u_{j_1},\ldots, u_{j_{k_0-1}}).
\end{eqnarray*}
But this contradicts the fact that $u_{j_1},\ldots, u_{j_n}$ span $M_\mathbb{R}$.

The above claim implies that if we replace $u_{i_l}$ by $u_{j_{k_0}}$, $u_{i_1},\ldots, u_{i_n}$ still span $M_\mathbb{R}$. It also shows that
\begin{equation*}
u_{i_l}\not\in \textnormal{Span}(u_{j_1},\ldots, u_{j_{k_0-1}}),
\end{equation*}
and hence by the choice of $u_{j_{k_0}}$, we have $\langle a, u_{j_{k_0}} \rangle \leq \langle a, u_{i_l}\rangle$. We conclude that if we replace $u_{i_l}$ by $u_{j_{k_0}}$, the question is reduced to the case when $k\geq k_0+1$, and we are done by induction.
\end{proof}

\begin{thm}
\label{thm_stablizer}
Fix a lattice point $a\in \Int(\sigma)$. Let $\chi^{u_1},\chi^{u_2},\ldots,\chi^{u_s}$ be the minimal set of monomial generators of $k[M\cap\sigma^\vee]$. If $H_{m,a}$ is the stabilizer of any element of $T_{m,a}$ under the $T_m$-action, then the following hold:

(1) We have $\dim(H_{m,a})= \Phi(a)$ for
\begin{equation}
\label{eqn_big}
m\geq \max\{\max_{1\leq i\leq n} \langle a,u_{j_i}\rangle\},
\end{equation}
where the maximum is taken over all possible choices of $n$ vectors $u_{j_1},...,u_{j_n}$ among $u_1,u_2,...,u_s$ that span $M_\mathbb{R}$, and such that the minimum in (\ref{eqn_ph}) is attained.

(2) If $m$ does not satisfy the inequality (\ref{eqn_big}), then we have either $\dim(H_{m,a})= \Phi(a)$ or $m\leq \dim(H_{m,a})\leq \Phi(a)$.
\end{thm}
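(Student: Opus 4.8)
The plan is to compute $\dim(H_{m,a})$ by writing down the stabilizer of the special $m$-jet $\gamma_{\varphi_a}$ explicitly as a subgroup of $T_m$ and then counting parameters. An element $\alpha\in T_m$ stabilizes $\gamma_{\varphi_a}$ if and only if $(\alpha\cdot\gamma_{\varphi_a})^\ast(\chi^{u})=\gamma_{\varphi_a}^\ast(\chi^{u})$ for all $u\in M\cap\sigma^\vee$, which by equation~(\ref{eqn_action}) means $\alpha^\ast(\chi^u)\cdot t^{\varphi_a(u)}=t^{\varphi_a(u)}$ in $k[t]/(t^{m+1})$. Since it suffices to check this on the generators $\chi^{u_1},\ldots,\chi^{u_s}$, the condition becomes: writing $\alpha^\ast(\chi^{u_i})=\sum_{j\ge 0} b_{i,j}t^j$ with $b_{i,0}\neq 0$, we need $b_{i,j}=0$ for $1\le j\le m-\varphi_a(u_i)$ and $b_{i,0}=1$ whenever $\varphi_a(u_i)\le m$, while there is no constraint coming from $u_i$ with $\varphi_a(u_i)>m$. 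So the stabilizer is cut out inside $T_m\cong (k^\ast\times k^m)^{n}$ (after trivializing $T_m$ via a basis) by vanishing of a certain number of the coordinate functions $b_{i,j}$.

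The next step is to count. First I would reduce to the sublattice $M'$ spanned by those $u_i$ with $\varphi_a(u_i)\le m$; by the hypothesis~(\ref{eqn_big}) on $m$ (via Lemma~\ref{lem_ph} and the greedy construction preceding it) this set spans $M_\mathbb{R}$, so $M'$ has finite index in $M$ and $T_m\to T'_m$ is finite \'etale by the argument in Lemma~\ref{lem_finiteness}; in particular $\dim(H_{m,a})=\dim(H'_{m,a})$ where $H'_{m,a}$ is computed inside $T'_m$. On $T'_m$ we may use a basis of $M'$ to identify $T'_m\cong (k^\ast)^n\times \mathbb{A}^{mn}$, and then express each $\chi^{u_i}$ as a Laurent monomial in the basis; the stabilizer conditions $\alpha^\ast(\chi^{u_i})=t^{\varphi_a(u_i)}$ for the chosen spanning set $u_{j_1},\ldots,u_{j_n}$ (which compute $\Phi(a)$, by Lemma~\ref{lem_ph}) then form a system that, by Lemma~\ref{lem_finiteness} applied to the torus and the truncation level $m$, pins down all but $\sum_{k=1}^n\varphi_a(u_{j_k})=\sum_{k=1}^n\langle a,u_{j_k}\rangle=\Phi(a)$ of the affine coordinates, up to a finite ambiguity. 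This gives $\dim(H_{m,a})=\Phi(a)$ and proves~(1). For~(2), when $m$ is small not every $u_i$ relevant to a minimal $n$-subset has $\varphi_a(u_i)\le m$; the remaining $u_i$ with $\varphi_a(u_i)>m$ impose \emph{no} conditions, so we can only say the stabilizer contains the subgroup obtained by freezing the coordinates indexed by generators of order $\le m$, which has dimension $\ge mn-(\text{number of imposed conditions})\ge m$ (each of the at most $n$ active generators kills at most $m$ parameters), and is contained in the full stabilizer for the associated spanning set, of dimension $\le \Phi(a)$ by the reduction in~(1) applied to any completion to a spanning set; this yields the sandwich $m\le\dim(H_{m,a})\le\Phi(a)$, with equality $\dim(H_{m,a})=\Phi(a)$ exactly when enough generators are active.

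The main obstacle I expect is bookkeeping: translating the Laurent-monomial relations $\chi^{u_i}=\prod \chi^{w_k}$ (with $u_i=\sum_k \langle u_i, \ast\rangle w_k^\vee$, possibly with negative exponents) into precise statements about which of the $(m+1)n$ affine coordinates of $T'_m$ are determined by the finitely many equations $\alpha^\ast(\chi^{u_{j_k}})=t^{\varphi_a(u_{j_k})}$, and checking that the "leftover" $\Phi(a)$ coordinates are genuinely free (not secondarily constrained). This is where the hypothesis~(\ref{eqn_big}) does real work: it guarantees, via the greedy choice and Lemma~\ref{lem_ph}, that the spanning set achieving $\Phi(a)$ consists entirely of generators of order $\le m$, so that the count is exactly $\Phi(a)$ rather than something smaller. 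The argument for~(2) is softer and mainly requires the two one-sided estimates, so I anticipate~(1)—and in particular the free-coordinate verification—to be the technical heart.
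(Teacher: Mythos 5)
Your setup (stabilizer conditions at the monomial generators, reduction to a finite-index sublattice via the finite \'etale map of Lemma \ref{lem_finiteness}, and counting free higher-order coefficients) is the same strategy as the paper, but the heart of the proof is missing. The count you describe — ``the conditions at a spanning set $u_{j_1},\ldots,u_{j_n}$ pin down all but $\Phi(a)$ coordinates up to finite ambiguity'' — only shows that the locus in $T_m$ cut out by the conditions at those $n$ generators has dimension $\Phi(a)$, hence $\dim(H_{m,a})\leq\Phi(a)$. The stabilizer must also satisfy the conditions at the remaining generators $u_z$, $z$ outside the spanning set, and a priori these cut the dimension further; this is exactly the ``secondarily constrained'' issue you flag as the expected obstacle, and you do not resolve it. Moreover you attribute its resolution to hypothesis (\ref{eqn_big}), which is not what that hypothesis does: (\ref{eqn_big}) only guarantees $\min(m,\varphi_a(u_{j_k}))=\varphi_a(u_{j_k})$ for the optimal set, so that the free-parameter count is $\Phi(a)$ rather than something smaller; it says nothing about the extra generators. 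In the paper the lower bound is obtained by taking the \emph{greedily} chosen spanning set of Lemma \ref{lem_ph}, lifting any admissible $\alpha'\in T'_m$ to the unique $\alpha\in T_m$ lying over the identity of $T$, writing $lu_z=\sum_{i=1}^q d_iu_i$ for any other generator, and using the greedy inequality $\varphi_a(u_z)\geq\varphi_a(u_i)$ for $i\leq q$ (together with the fact that $\alpha$ lies over the identity, so one can extract $l$-th roots of $1+O(t)$ in characteristic $0$) to show the condition at $u_z$ is automatically satisfied. Without some such argument the equality $\dim(H_{m,a})=\Phi(a)$ in (1) is unproved; note also that your intermediate object ``$H'_{m,a}$ computed inside $T'_m$'' is not well defined until this implication is known, since the defining conditions involve $\chi^{u_z}$ for $u_z$ possibly outside $M'$, and your displayed stabilizer equations $\alpha^\ast(\chi^{u_{j_k}})=t^{\varphi_a(u_{j_k})}$ are not the stabilizer conditions (the correct ones, which you state earlier, are $\alpha^\ast(\chi^{u_i})\equiv 1$ modulo $t^{m+1-\varphi_a(u_i)}$).

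The same gap undermines your part (2). The estimate ``each of the at most $n$ active generators kills at most $m$ parameters, hence $\dim(H_{m,a})\geq m$'' is not valid: the number of generators $u_i$ with $\varphi_a(u_i)\leq m$ is bounded by $s$, not by $n$, and a naive codimension count over all of them can be far too small (even negative), so no lower bound follows without showing the conditions at the non-chosen generators are redundant. The paper's route gives (2) cleanly: the lift-over-the-identity construction for the greedy set yields $\dim(H_{m,a})\geq\sum_{k=1}^n\min\{m,\varphi_a(u_{j_k})\}$, which is $\geq m$ as soon as some $\varphi_a(u_{j_k})>m$, and this combines with the universal upper bound $\dim(H_{m,a})\leq\Phi(a)$ to give the dichotomy in (2). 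So the proposal needs the redundancy argument for the extra generators — with the greedy choice of Lemma \ref{lem_ph} doing the real work, not hypothesis (\ref{eqn_big}) — before either (1) or (2) is established.
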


\begin{proof}
For simplicity we write $\varphi_m$ for $\min\{m,\bar{\varphi}_a\}$, and $\varphi^m$ for $\min\{m+1,\bar{\varphi}_a\}$. Let $H_{m,a}$ be the stabilizer of the special jet $\gamma_{\bar{\varphi}_a}$ defined in Lemma \ref{lem_orbits}. Then an $m$-jet $\alpha\in T_m$ is contained in $H_{m,a}$ if and only if
\begin{equation}
\label{eqn_6}
\alpha^\ast(\chi^{u_i})\cdot t^{\varphi^m(u_i)}=t^{\varphi^m(u_i)}\textnormal{ in } k[t]/(t^{m+1}) \textnormal{ for }1\leq i\leq s.
\end{equation}
This is clearly equivalent to
\begin{equation}
\label{eqn_7}
\begin{split}
\alpha^\ast (\chi^{u_i})=1+\sum_{j=m+1-\varphi^m(u_i)}^m a_{i,j}t^j,\ \textnormal{if }\varphi^m(u_i)\leq m,\\
\alpha^\ast (\chi^{u_i})=\sum_{j=0}^m a_{i,j}t^j,\ \textnormal{if }\varphi^m(u_i)=m+1,
\end{split}
\end{equation}
for each $i$ and for some $a_{i,j}\in k$, with the condition that $a_{i,0}\neq 0$ when $\varphi^m(u_i)=m+1$.

Choose any $n$ vectors from $\{u_1,\ldots,u_s\}$ that span $M_\mathbb{R}$. By relabeling, let us assume they are $u_1,\ldots,u_n$.
 We define $A:=\mathbb{A}^{\sum_{i=1}^n \varphi^m(u_i)}$ and the map
\begin{equation*}
\pi:H_{m,a}\longrightarrow A,\ \pi(\alpha)=(a_{i,j})_{1\leq i\leq n,m+1-\varphi^m(u_i)\leq j\leq m}.
\end{equation*}
Then Lemma \ref{lem_finiteness} implies that $\pi$ has finite fibers. Therefore, we have
\begin{equation*}
\dim(H_{m,a})\leq\dim(A)= \sum_{i=1}^n\varphi^m(u_i).
\end{equation*}
By letting $u_1,\ldots,u_n$ vary so that they span $M_\mathbb{R}$, we conclude that
\begin{equation*}
\dim(H_{m,a})\leq\min\{\sum_{i=1}^n \varphi^m(u_{j_i})|u_{j_1},\ldots,u_{j_n}\textnormal{ span }M_\mathbb{R}\}\leq \Phi(a).
\end{equation*}\\

In what follows, we assume that after relabeling, $u_1,\ldots, u_n$ are chosen as in Lemma \ref{lem_ph}. We claim that
\begin{equation}
\label{eqn_8}
\dim(H_{m,a})\geq \sum_{i=1}^n \varphi_m(u_i).
\end{equation}
Consider the subgroup $M'$ of $M$ generated by $u_1,\ldots,u_n$ and the corresponding torus $T'=\Spec\ k[M']$. By the proof of Lemma \ref{lem_finiteness}, the commutative diagram
$$\begin{CD}
T_m @> >> T'_m\\
@V\pi_m^T VV @V\pi_m^{T'} VV\\
T @> >> T'.
\end{CD}$$
is Cartesian. Hence, for each $\alpha'\in T_m'$ that lies over $(1,\ldots, 1)\in T'$, there is a unique $\alpha\in T_m$ lying over $(1,\ldots, 1)\in T$ such that $\alpha$ is mapped to $\alpha'$. We claim that for each $\alpha'\in T_m'$ lying over $(1,\ldots, 1)$ that satisfies (\ref{eqn_7}) for $1\leq i\leq n$, the corresponding $\alpha\in T_m$ is an element in $H_{m,a}$.

To prove this, we just need to show that $\alpha$ satisfies conditions (\ref{eqn_7}) for $1\leq i\leq s$. Since $\alpha$ maps to $\alpha'$, it automatically satisfies (\ref{eqn_7}) for $1\leq i\leq n$. Now pick an integer $z$ such that $n+1\leq z\leq s$. Then there exist integers $l>0$, $d_i$ and $q\leq n$ such that
\begin{equation}
\label{eqn_7.5}
lu_z=\sum_{i=1}^q d_i u_i,
\end{equation}
where $d_q\neq 0$. By applying $\alpha^\ast$ on both sides, we get
\begin{equation*}
\alpha^\ast(\chi^{u_z})^l=\prod_{i=1}^q \alpha^\ast(\chi^{u_i})^{d_i}.
\end{equation*}
By using (\ref{eqn_7}) for $1\leq i\leq n$, we see that the $t$-order of $\prod_{i=1}^q \alpha^\ast(\chi^{u_i})^{d_i}-1$, hence also that of $\alpha^\ast(\chi^{u_z})^l-1$, is at least $\min_{1\leq i\leq q} \{m+1-\varphi_m(u_i)\}$. Since $\alpha$ lies over $(1,\ldots, 1)$, this implies that the $t$-order of $\alpha^\ast(\chi^{u_z})-1$ is at least $\min_{1\leq i\leq q} \{m+1-\varphi_m(u_i)\}$. On the other hand, equation (\ref{eqn_7.5}) implies that
\begin{equation*}
u_z\in \{u_1,\ldots,u_s\}\backslash \textnormal{Span}(u_1,\ldots, u_k),
\end{equation*}
for each $k$ with $1\leq k\leq q-1$. By the construction of $u_1,\ldots, u_n$, we have $\varphi_a(u_z)\geq \varphi_a(u_k)$ for each $1\leq k\leq q$. Hence, we have
\begin{equation*}
m+1-\varphi^m(u_z)\leq \min_{1\leq i\leq q} \{m+1-\varphi^m(u_i)\}.
\end{equation*}
So the $t$-order of $\alpha^\ast(\chi^{u_z})-1$ is $\geq m+1-\varphi^m(u_z)$. This, however, implies condition (\ref{eqn_7}) for $i=z$. Since $z$ is arbitrary, $\alpha$ satisfies conditions (\ref{eqn_7}) for $1\leq i\leq s$, and hence $\alpha\in H_{m,a}$.

Define the affine space $A$ and the map $\pi:H_{m,a}\rightarrow A$ as above with respect to $u_1,\ldots, u_n$. Let $Y\subset A$ be the subspace defined by $a_{i,0}=1$ for $1\leq i\leq n$ such that $\varphi^m(u_i)=m+1$. Then the above discussion shows that $Y$ is contained in the image of $\pi$. We conclude that
\begin{equation*}
\dim(H_{m,a})\geq \dim(Y)=\sum_{i=1}^n \varphi_m(u_i).
\end{equation*}

According to Lemma \ref{lem_ph}, the minimum in (\ref{eqn_ph}) is achieved by $u_1,\ldots, u_n$. Hence, the condition (\ref{eqn_big}) guarantees that $m\geq \varphi_a(u_i)$ for each $1\leq i\leq n$. Under this condition, we have
\begin{equation*}
\dim(H_{m,a})\geq \sum_{i=1}^n \varphi_m(u_i)= \sum_{i=1}^n \varphi_a(u_i)=\Phi(a).
\end{equation*}
This completes the proof of (1).

For (2), we consider two cases. If $m\geq \max_{1\leq i\leq n}\varphi_a(u_i)$, then (\ref{eqn_8}) implies that $\dim(H_{m,a})\geq \Phi(a)$ as in (1). If there is some $i$, with $1\leq i\leq n$, such that $m<\varphi_a(u_i)$, then $\varphi_m(u_i)=m$. So (\ref{eqn_8}) implies that $\dim(H_{m,a})\geq \varphi_m(u_i)=m$. Since we have proved that $\dim(H_{m,a})$ is always $\leq \Phi(a)$, the conclusions in (2) follow.
\end{proof}

\begin{cor}
\label{cor_orbits}
With the same assumptions as in Theorem \ref{thm_stablizer} and for all $m\geq 0$, the dimension of the orbit $T_{m,a}$ satisfies one of the following:
\begin{equation}
\label{eqn_8.1}
\dim(T_{m,a})=(m+1)n- \Phi(a),\textnormal{ or}
\end{equation}
\begin{equation}
\label{eqn_8.2}
(m+1)n- \Phi(a)\leq \dim(T_{m,a})\leq (m+1)n-m.
\end{equation}
\end{cor}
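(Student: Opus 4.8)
The plan is to deduce Corollary~\ref{cor_orbits} directly from Theorem~\ref{thm_stablizer} together with the orbit–stabilizer relation for the $T_m$-action. The first step is to record that for any $m$-jet $\gamma$ in the orbit $T_{m,a}$ we have
\begin{equation*}
\dim(T_{m,a})=\dim(T_m)-\dim(H_{m,a}),
\end{equation*}
which is valid here because $T_m$ is a smooth group scheme (the torus $T$ is smooth, so $T_m$ is smooth of dimension $(m+1)n$ by Corollary~\ref{jet_smooth}) acting on $X_m$, and the orbit is a homogeneous space. Since $T\cong(k^\ast)^n$, Example~\ref{jet_affine_space} gives $\dim(T_m)=(m+1)n$. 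So the corollary is merely the translation of the two alternatives in Theorem~\ref{thm_stablizer} through the substitution $\dim(T_{m,a})=(m+1)n-\dim(H_{m,a})$.

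Next I would split into the two cases of Theorem~\ref{thm_stablizer}. If $m$ satisfies the inequality~(\ref{eqn_big}), or more generally whenever $\dim(H_{m,a})=\Phi(a)$ (this covers part~(1) and the first alternative of part~(2)), then substituting yields
\begin{equation*}
\dim(T_{m,a})=(m+1)n-\Phi(a),
\end{equation*}
which is exactly~(\ref{eqn_8.1}). In the remaining case of part~(2), Theorem~\ref{thm_stablizer} gives $m\leq\dim(H_{m,a})\leq\Phi(a)$; substituting the upper bound $\dim(H_{m,a})\leq\Phi(a)$ gives $\dim(T_{m,a})\geq(m+1)n-\Phi(a)$, and substituting the lower bound $\dim(H_{m,a})\geq m$ gives $\dim(T_{m,a})\leq(m+1)n-m$, which together are~(\ref{eqn_8.2}).

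I would finish by noting that these two cases are exhaustive for all $m\geq 0$: Theorem~\ref{thm_stablizer}(1) handles all $m$ satisfying~(\ref{eqn_big}), and Theorem~\ref{thm_stablizer}(2) handles the rest, and in each instance the dimension of $H_{m,a}$ either equals $\Phi(a)$ or lies in $[m,\Phi(a)]$. Since $\Phi(a)\geq m$ is not assumed, one should observe that when $\Phi(a)<m$ we are necessarily in a situation where $\dim(H_{m,a})=\Phi(a)$ (the interval $[m,\Phi(a)]$ being empty forces the first alternative), so~(\ref{eqn_8.1}) holds; this keeps the statement consistent. The main (and really the only) subtlety is justifying the orbit–stabilizer dimension formula in this scheme-theoretic setting — one must make sure the $T_m$-orbit is locally closed with the expected dimension, which follows from general facts about actions of smooth affine group schemes of finite type over a field on schemes of finite type; everything else is a direct substitution.
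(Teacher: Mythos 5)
Your proposal is correct and is essentially the paper's own argument: the paper likewise notes that $T$ is smooth so $\dim(T_m)=(m+1)n$ by Corollary \ref{jet_smooth}, and then reads off both alternatives from Theorem \ref{thm_stablizer} via the orbit–stabilizer dimension relation. Your extra remarks (handling $\Phi(a)<m$ and justifying the orbit–stabilizer formula for the smooth group scheme $T_m$) are sound elaborations of details the paper leaves implicit.
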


\begin{proof}
Observe that $T$ is smooth of dimension $n$. Hence by Corollary \ref{jet_smooth}, $\dim(T_m)=n(m+1)$. The conclusions follow immediately from Theorem \ref{thm_stablizer}.
\end{proof}

Now we can prove our main result. Recall that the Mather minimal log discrepancy can be computed in terms of the invariant $\lambda$ defined in Definition \ref{defn_lambda}, via Property \ref{fundamental_prop}. According to Lemma \ref{lem_lambda}, this in turn can be computed from the dimension of $C^m$ (defined in Definition \ref{defn_Cm}), when $m$ is large enough. We have seen that $C^m$ can be approximated by a union of explicit $T_m$-orbits. Thus computing the dimension of $C^m$ boils down to computing the dimension of these $T_m$-orbits.

\begin{thm}
\label{thm_toric}
For $m$ large enough we have
\begin{equation}
\dim(C^m)=n(m+1)-\min_{a\in \Int(\sigma)\cap N} \Phi(a),
\end{equation}
where $\Phi$ is defined in Definition \ref{defn_ph}.
\end{thm}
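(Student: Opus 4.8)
The plan is to reduce the computation of $\dim(C^m)$ to a finite union of $T_m$-orbits $T_{m,a}$ indexed by lattice points $a \in \Int(\sigma) \cap N$, and then to combine the dimension estimates for these orbits from Corollary \ref{cor_orbits} with the dimension bound for the ``error term'' consisting of arcs through the torus-invariant divisors. First I would recall from the remark following Lemma \ref{lem_compactness} that $\psi_m(\pi^{-1}(x_\sigma) \setminus \cup_i (D_i)_\infty)$ is exactly the disjoint union of the orbits $T_{m,a}$ for $a \in \Int(\sigma) \cap N$, and from Corollary \ref{cor_finiteness} that only finitely many of these orbits are nonempty. Hence
\begin{equation*}
\dim\big(\psi_m(\pi^{-1}(x_\sigma) \setminus \textstyle\bigcup_i (D_i)_\infty)\big) = \max_{a} \dim(T_{m,a}),
\end{equation*}
where the maximum is over the finitely many relevant $a$.

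Next I would control the contribution of the arcs lying in some $(D_i)_\infty$. Each $D_i = V(\tau_i)$ has dimension $n-1 < n$, so by Lemma \ref{lem_4.3}(1) applied to $D_i$ we get $\dim(\psi_m((D_i)_\infty)) \le (m+1)(n-1) = n(m+1) - (m+1)$. Comparing with the main term $n(m+1) - \Phi(a)$: for $m$ large enough (specifically once $m+1 > \max_a \Phi(a)$, which is a finite bound since only finitely many $a$ are relevant), the divisor contribution is strictly dominated, so it does not affect $\dim(C^m)$. Therefore, for $m \gg 0$, $\dim(C^m) = \max_a \dim(T_{m,a})$, the maximum taken over $a \in \Int(\sigma) \cap N$ for which $T_{m,a} \ne \emptyset$.

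It then remains to evaluate $\max_a \dim(T_{m,a})$ for $m$ large. By Corollary \ref{cor_orbits}, each orbit satisfies either $\dim(T_{m,a}) = n(m+1) - \Phi(a)$, or the sandwich $n(m+1) - \Phi(a) \le \dim(T_{m,a}) \le n(m+1) - m$; and by Theorem \ref{thm_stablizer}(1) the first (exact) alternative holds as soon as $m \ge \max_i \langle a, u_{j_i} \rangle$ for the minimizing vectors. Since there are only finitely many relevant $a$, there is a uniform $m_0$ beyond which the exact formula $\dim(T_{m,a}) = n(m+1) - \Phi(a)$ holds for every relevant $a$ simultaneously; moreover, for $m$ this large the second (inexact) alternative cannot produce a larger value because $n(m+1) - m \le n(m+1) - \Phi(a)$ would force $\Phi(a) \le m$, which is already covered by the exact case. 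Hence for $m \gg 0$,
\begin{equation*}
\dim(C^m) = \max_{a \in \Int(\sigma) \cap N} \big(n(m+1) - \Phi(a)\big) = n(m+1) - \min_{a \in \Int(\sigma) \cap N} \Phi(a),
\end{equation*}
which is the claim. The main obstacle I anticipate is bookkeeping the various ``$m$ large enough'' thresholds consistently — one needs the finiteness of the set of relevant lattice points $a$ (from Corollary \ref{cor_finiteness}, via compactness of $P_{c_0}$) to pass from the per-orbit thresholds in Theorem \ref{thm_stablizer} to a single uniform $m_0$, and one must check that the minimum of $\Phi$ over $\Int(\sigma) \cap N$ is actually attained on this finite set rather than only approached. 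A secondary point is verifying that the ``thin'' part (arcs in the $(D_i)_\infty$) genuinely contributes nothing: this is where Lemma \ref{lem_4.3}(1) and the strict inequality $n-1 < n$ do the work, but one should note that $C^m$ is the union of this thin image with the orbit union, so its dimension is the max of the two, and the orbit union wins.
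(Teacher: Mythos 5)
Your overall strategy is exactly the paper's: split $C^m$ into the $T_m$-orbits $T_{m,a}$ with $a\in \Int(\sigma)\cap N$ plus the part coming from $\bigcup_i (D_i)_\infty$, bound the latter by $(m+1)(n-1)$ via Lemma \ref{lem_4.3}, and evaluate the orbit maximum via Theorem \ref{thm_stablizer} and Corollary \ref{cor_orbits}. The step that fails is the uniformity claim: ``since there are only finitely many relevant $a$, there is a uniform $m_0$ beyond which the exact formula holds for every relevant $a$ simultaneously'' (and, in the divisor comparison, the threshold ``$m+1>\max_a\Phi(a)$''). The finiteness in Corollary \ref{cor_finiteness} is for each \emph{fixed} $m$, and the set of orbits grows with $m$: its proof only confines the relevant $a$ to a region whose coordinates are bounded by roughly $m+c_0$, so for every $m$ there are orbits $T_{m,a}$ in $C^m$ with $\Phi(a)>m$, for which only the sandwich (\ref{eqn_8.2}) is available, and $\max_a\Phi(a)$ over the relevant $a$ grows linearly in $m$, so your threshold condition is never met. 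The same confusion appears in your closing paragraph: the minimum of $\Phi$ over $\Int(\sigma)\cap N$ is attained not because the relevant $a$ form a finite set independent of $m$, but simply because $\Phi$ is integer-valued and bounded below (by $n$).

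The gap is repairable, and your own ``moreover'' sentence contains the right idea, but it needs to be made precise and to carry the whole weight of the argument. Since each $\langle a,u_{j_i}\rangle\leq\Phi(a)$, condition (\ref{eqn_big}) holds as soon as $m\geq\Phi(a)$; hence any orbit in the inexact alternative has $\Phi(a)>m$ and $\dim(T_{m,a})\leq n(m+1)-m$. So once $m\geq\min_{a}\Phi(a)$ (a fixed, finite threshold), neither these orbits nor the divisorial part, of dimension at most $n(m+1)-(m+1)$, can exceed $n(m+1)-\min_a\Phi(a)$, while a fixed minimizer $a_0$ gives $\dim(T_{m,a_0})\geq n(m+1)-\Phi(a_0)$; this yields the theorem with thresholds depending only on $\min_a\Phi(a)$, with no uniform exactness needed. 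The paper sidesteps the issue differently: by Lemma \ref{lem_lambda} it knows $\dim(C^m)=mn-\lambda(x_\sigma)$ for $m\gg 0$, and it compares both the sandwich-case orbits (bounded by $(m+1)n-m$) and the images of the $(D_i)_\infty$ against $mn-\lambda(x_\sigma)$ for $m>n+\lambda(x_\sigma)$, so that discarding them does not change the maximum.
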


\begin{proof}
First of all, note that $T_{m,a}$ lies over the torus-fixed point $x_\sigma$ if and only if $a$ is in the interior of the cone $\sigma$ (see Lemma \ref{lem_orbits}). Therefore $C^m$ is the union of finitely many $T_m$-orbits $T_{m,a}$ (by Lemma \ref{lem_orbits} and Corollary \ref{cor_finiteness}), for $a$ in the interior of $\sigma$, and of the orbits contained in the image of the $(D_i)_\infty$. But $\dim (\psi_m((D_i)_\infty)\leq (n-1)(m+1)$ by Lemma \ref{lem_4.3}. When $m$ is large enough, the dimension of these orbits contained in the image of the $(D_i)_\infty$ is smaller than $mn-\lambda(x_\sigma)$. Thus, we only need to compute $\max_{a\in \Int(\sigma)\cap N} \dim(T_{m,a})$ when $m$ is large enough. Note that even though $\Int(\sigma)\cap N$ is an infinite set, we are actually taking maximum over the finite set of $T_m$-orbits.

By Lemma \ref{lem_lambda} we thus see if $m$ is large enough, then
\begin{equation*}
mn-\lambda(x_\sigma)=\dim(C^m)=\max_{a\in \Int(\sigma)\cap N} \dim(T_{m,a}).
\end{equation*}
Let us fix such $m$ such that, in addition, $m>n+\lambda(x_\sigma)$. From Corollary \ref{cor_orbits}, we see two cases (\ref{eqn_8.1}) and (\ref{eqn_8.2}). If $\dim(T_{m,a})\leq (m+1)n-m$, then we have
\begin{equation*}
n(m+1)-\Phi(a)\leq \dim(T_{m,a})\leq(m+1)n-c\cdot m<mn-\lambda(x_\sigma).
\end{equation*}
Therefore, replacing these $\dim(T_{m,a})$ by $n(m+1)-\Phi(a)$ does not change the maximum of $\dim(T_{m,a})$. So we get
\begin{eqnarray*}
&&\max_{a\in \Int(\sigma)\cap N} \dim(T_{m,a})\\
&=&\max_{a\in \Int(\sigma)\cap N}\Big\{(m+1)n- \Phi(a)\Big\}\\
&=&n(m+1)-\min_{a\in \Int(\sigma)\cap N}\Phi(a).
\end{eqnarray*}
The last formula gives the assertion in the theorem.
\end{proof}

\begin{cor}
\label{cor_toric}
Let $X$ be an affine toric variety over $k$ of dimension $n$ associated to a cone $\sigma$. Let $N$ be the lattice and $M$ be the dual lattice. If $\sigma$ spans $N_\mathbb{R}$ and $x_\sigma \in X$ is the torus-invariant point, the invariant $\lambda(x_\sigma)$ defined in Definition \ref{defn_lambda} is computed by the following formula
\begin{equation*}
\lambda(x_\sigma)=\min_{a\in \Int(\sigma)\cap N}\Phi(a)-n,
\end{equation*}
where the function $\Phi$ is defined in Definition \ref{defn_ph}.
\end{cor}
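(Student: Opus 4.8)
The plan is to obtain the corollary by directly combining Theorem~\ref{thm_toric} with the definition of $\lambda(x_\sigma)$, so that essentially all of the substantive work has already been done. Recall from Definition~\ref{defn_lambda}, together with Lemma~\ref{lem_lambda}, that $\lambda(x_\sigma)$ is the eventual constant value of the sequence $\lambda_m(x_\sigma) = mn - \dim\psi_m(\pi^{-1}(x_\sigma))$, and that by Definition~\ref{defn_Cm} we have $\psi_m(\pi^{-1}(x_\sigma)) = C^m$. Hence $\lambda(x_\sigma) = mn - \dim(C^m)$ for every $m \gg 0$.

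First I would fix an integer $m$ large enough so that simultaneously: (a) $\lambda_m(x_\sigma) = \lambda(x_\sigma)$, which is permitted by Lemma~\ref{lem_lambda}; and (b) the dimension formula of Theorem~\ref{thm_toric} applies, i.e.\ $\dim(C^m) = n(m+1) - \min_{a\in\Int(\sigma)\cap N}\Phi(a)$. For such $m$ one simply substitutes:
\begin{equation*}
\lambda(x_\sigma) = mn - \dim(C^m) = mn - \Big(n(m+1) - \min_{a\in\Int(\sigma)\cap N}\Phi(a)\Big) = \min_{a\in\Int(\sigma)\cap N}\Phi(a) - n,
\end{equation*}
which is the claimed formula. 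Although $\Int(\sigma)\cap N$ is infinite, the minimum is genuinely attained: as observed in the proof of Theorem~\ref{thm_toric}, only finitely many orbits $T_{m,a}$ contribute to $\dim(C^m)$, so $\min_a\Phi(a)$ is in fact a minimum over a finite set.

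I do not anticipate a genuine obstacle here, since the real content lies entirely in Theorem~\ref{thm_toric} (and, upstream, Proposition~\ref{fundamental_prop}); the corollary is bookkeeping that isolates the invariant $\lambda(x_\sigma)$ for later use. As a consistency check one may note that $\lambda(x_\sigma)\ge 0$ by Lemma~\ref{lem_lambda} forces $\min_{a}\Phi(a)\ge n$, which also follows directly from the fact that $\langle a,u\rangle\ge 1$ for every $a\in\Int(\sigma)\cap N$ and every nonzero $u\in M\cap\sigma^\vee$, so that each of the $n$ summands defining $\Phi(a)$ is at least $1$.
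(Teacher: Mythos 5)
Your proposal is correct and matches the paper's intent exactly: the corollary is stated as an immediate consequence of Theorem~\ref{thm_toric}, and the substitution $\lambda(x_\sigma)=mn-\dim(C^m)=mn-\bigl(n(m+1)-\min_{a}\Phi(a)\bigr)$ for $m\gg 0$ (using Lemma~\ref{lem_lambda} and Definition~\ref{defn_Cm}) is precisely the bookkeeping the paper performs, with the relation $mn-\lambda(x_\sigma)=\dim(C^m)$ already appearing inside the proof of the theorem. Your side remarks (finiteness of the relevant orbits, and the consistency check $\min_a\Phi(a)\ge n$) are accurate but not needed, since $\Phi$ is positive-integer valued so the infimum is attained in any case.
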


The following is a direct corollary of Corollary \ref{cor_toric} and Proposition \ref{fundamental_prop}.
\begin{cor}
With the same assumptions as in Corollary \ref{cor_toric}, we have
\begin{equation*}
\mld (x_\sigma;X)=\min_{a\in \Int(\sigma)\cap N}\Big\{ \min \big\{\sum_{i=1}^n \langle a,u_{i} \rangle | u_{1},\ldots,u_{n}\ \textnormal{span}\  M_\mathbb{R}\textnormal{, }u_i\in M\cap \sigma^\vee\textnormal{ for each } i\big\}\Big\},
\end{equation*}
where the second minimum is run over all linearly independent sets of vectors $\{u_1,\ldots, u_n\}$ in $M\cap \sigma^\vee$.
\end{cor}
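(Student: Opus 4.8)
The plan is to simply chain together the two results cited immediately before the statement, so essentially no new work is required. First I would record that $X=X(\sigma)$ is an affine toric variety of dimension $n$, since $\sigma$ has dimension $n$ and spans $N_\mathbb{R}$; this is the value of $d$ that appears in Proposition~\ref{fundamental_prop}.

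Next, applying Proposition~\ref{fundamental_prop} at the closed point $x_\sigma$ gives
\begin{equation*}
\lambda(x_\sigma)=\mld(x_\sigma;X)-n,
\end{equation*}
equivalently $\mld(x_\sigma;X)=\lambda(x_\sigma)+n$. Then I would substitute the formula for $\lambda(x_\sigma)$ furnished by Corollary~\ref{cor_toric}, namely $\lambda(x_\sigma)=\min_{a\in \Int(\sigma)\cap N}\Phi(a)-n$, so that the two $n$'s cancel and $\mld(x_\sigma;X)=\min_{a\in \Int(\sigma)\cap N}\Phi(a)$.

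Finally I would unwind the definition of $\Phi(a)$ from Definition~\ref{defn_ph}: $\Phi(a)$ is by definition the minimum of $\sum_{i=1}^n\langle a,u_i\rangle$ over all linearly independent sets $\{u_1,\dots,u_n\}\subset M\cap\sigma^\vee$ that span $M_\mathbb{R}$, which is exactly the inner minimum in the stated expression. Combining this with the previous step yields the displayed formula. There is no real obstacle here — the only things to verify are the bookkeeping that $\dim X=n$ so the additive constants in Proposition~\ref{fundamental_prop} and Corollary~\ref{cor_toric} cancel precisely, and that the inner minimum in the statement is literally $\Phi(a)$ as defined; both are immediate.
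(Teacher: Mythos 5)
Your proposal is correct and is exactly how the paper obtains this corollary: it is stated as a direct consequence of combining Proposition \ref{fundamental_prop} with Corollary \ref{cor_toric}, with the two copies of $n$ cancelling and $\Phi(a)$ unwound per Definition \ref{defn_ph}. Nothing further is needed.
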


\subsection{Examples}
$\\$
The conclusions of Theorem \ref{thm_toric} and Corollary \ref{cor_toric} involve two minima. It is not clear whether the formula can be simplified in the case of an arbitrary toric variety. However, we can simplify this formula in some special cases. Here we provide some examples of computations of the invariant $\lambda$.

\begin{exmp}
Suppose $\sigma\subset \mathbb{R}^2$ is the two dimensional cone generated by $2e_1-e_2$ and $e_2$, where $e_1$ and $e_2$ form the standard basis of $N$. Then $\sigma^\vee$ is a cone in $M_\mathbb{R}$ generated by $e_1^\ast$ and $e_1^\ast+2e_2^\ast$, where $e_1^\ast$ and $e_2^\ast$ form the dual basis. It's easy to see that $u_1=e_1^\ast$, $u_2=e_1^\ast+e_2^\ast$ and $u_3=e_1^\ast+2e_2^\ast$ form the minimal set of generators of $M\cap \sigma^\vee$.

For each $a\in N$ we write $a=(x,y)$, where $x$, $y$ are coordinates with respect to the standard basis. In order that $a\in \Int(\sigma)\cap N$, we need to have $x>0$ and $x+2y>0$. Therefore, according to Corollary \ref{cor_toric} we have
\begin{eqnarray*}
\lambda(x_\sigma)&=&\underset{x>0,x+2y>0}{\min} \min \{x+(x+y),x+(x+2y),(x+y)+(x+2y)\}-2\\
&=& \underset{x>0,x+2y>0}{\min} \min \{ 2x+y,2x+3y\}-2.
\end{eqnarray*}
It's easy to see that the minimum is equal to $0$, which is attained when $x=1$ and $y=0$, and hence $\mld(x_\sigma;X)=\dim(X)=2$.
\end{exmp}

In fact, we have the following general result:

\begin{prop}
\label{prop_simplicial_isolated}
If the torus-invariant point $x_\sigma$ is an isolated singularity of a simplicial toric variety $X$, then $\lambda(x_\sigma)=0$, and hence $\mld(x_\sigma;X)=\dim(X)$.
\end{prop}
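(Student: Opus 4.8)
The plan is to invoke Corollary~\ref{cor_toric}, which gives $\lambda(x_\sigma)=\min_{a\in\Int(\sigma)\cap N}\Phi(a)-n$; thus it suffices to prove $\min_{a}\Phi(a)=n$, and then $\mld(x_\sigma;X)=\dim(X)$ is immediate from Proposition~\ref{fundamental_prop}. One inequality is free: for $a\in\Int(\sigma)\cap N$ and a nonzero $u\in M\cap\sigma^\vee$ the integer $\langle a,u\rangle$ is strictly positive, hence $\ge 1$, so any choice of $n$ linearly independent vectors in the definition of $\Phi(a)$ contributes at least $n$, i.e.\ $\Phi(a)\ge n$ for every $a$. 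So the whole problem reduces to exhibiting a single $a^\circ\in\Int(\sigma)\cap N$ together with $n$ linearly independent vectors $u^{(1)},\dots,u^{(n)}\in M\cap\sigma^\vee$ with $\langle a^\circ,u^{(j)}\rangle=1$ for all $j$.

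To produce these I will exploit the simplicial hypothesis. Let $v_1,\dots,v_n$ be the primitive ray generators of $\sigma$; since $\sigma$ spans $N_\mathbb{R}$ and is simplicial they form an $\mathbb{R}$-basis of $N_\mathbb{R}$. Let $v_1^\vee,\dots,v_n^\vee$ be the dual basis of $M_\mathbb{R}$, let $M'\supseteq M$ be the lattice they generate, put $N'=\mathbb{Z}\langle v_1,\dots,v_n\rangle$ and $q=[M':M]=[N:N']$; one checks at once that $\sigma^\vee=\mathrm{cone}(v_1^\vee,\dots,v_n^\vee)$. The isolated-singularity hypothesis enters only through the structure of $M'/M$: since $X$ is smooth away from $x_\sigma$ and the orbits $O(\tau)$, $\tau$ a proper face of $\sigma$, sweep out $X\setminus\{x_\sigma\}$, the standard criterion that $X(\sigma)$ is smooth along $O(\tau)$ exactly when $\tau$ is a nonsingular cone forces every facet $\{v_i:i\ne j\}$ of $\sigma$ to be part of a $\mathbb{Z}$-basis of $N$, i.e.\ $\mathbb{Z}\langle v_i:i\ne j\rangle$ is saturated in $N$. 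Feeding this into the coordinate functionals $x=\sum t_iv_i\mapsto t_j$ then shows that $\overline{v_j^\vee}$ has order exactly $q$ in $M'/M$ for every $j$, whence $M'/M$ is cyclic of order $q$ and is generated by each of the classes $\overline{v_j^\vee}$.

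With this in hand the construction is explicit. Write $\overline{v_i^\vee}=s_i\overline{v_1^\vee}$ with $s_i$ a unit mod $q$ (so $s_1=1$), choose integer representatives $1\le s_i'\le q-1$ with $s_1'=1$ (if $q=1$ then $x_\sigma$ is already a smooth point, and one takes $a^\circ=v_1+\dots+v_n$, $u^{(j)}=v_j^\vee$), and set
\begin{equation*}
a^\circ=\tfrac1q\sum_{i=1}^n s_i'v_i,\qquad u^{(1)}=q\,v_1^\vee,\qquad u^{(j)}=(q-s_j')\,v_1^\vee+v_j^\vee\ \ (2\le j\le n).
\end{equation*}
Then $a^\circ$ has positive coordinates in $v_1,\dots,v_n$, so $a^\circ\in\Int(\sigma)$, and pairing with $m=\sum c_jv_j^\vee\in M$ gives $\tfrac1q\sum s_j'c_j$, which is an integer because $m\in M$ forces $q\mid\sum c_js_j\equiv\sum c_js_j'$; hence $a^\circ\in N$. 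Each $u^{(j)}$ has nonnegative coordinates in the $v_i^\vee$, so lies in $\sigma^\vee$, and its class in $M'/M$ is $0$ (for $u^{(1)}$ it is $q\overline{v_1^\vee}$, for $u^{(j)}$ it is $(q-s_j'+s_j)\overline{v_1^\vee}$), so $u^{(j)}\in M\cap\sigma^\vee$; a one-line computation gives $\langle a^\circ,u^{(j)}\rangle=1$, and the determinant of the coordinate matrix of $u^{(1)},\dots,u^{(n)}$ in the basis $(v_i^\vee)$ is $q\ne 0$, so they are linearly independent. Thus $\Phi(a^\circ)\le n$, and combined with $\Phi\ge n$ this yields $\min_a\Phi(a)=n$, hence $\lambda(x_\sigma)=0$.

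The bound $\Phi\ge n$ and the verifications in the last paragraph are routine. The step that genuinely uses the hypotheses, and the one I expect to be the crux, is the middle paragraph: translating ``isolated singularity of a simplicial $X$'' into the precise algebraic statement that $M'/M$ is cyclic with each $\overline{v_j^\vee}$ a generator of order $q$.
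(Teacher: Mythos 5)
Your proof is correct, and it shares the paper's skeleton: both arguments reduce, via Corollary \ref{cor_toric}, to exhibiting a single interior lattice point $a$ together with $n$ linearly independent vectors $u_i\in M\cap\sigma^\vee$ satisfying $\langle a,u_i\rangle=1$, and both extract from the isolated-singularity hypothesis that the facets of $\sigma$ are nonsingular (the paper proves this step directly via the decomposition $U_\tau\cong U_{\tau'}\times(k^\ast)^{n-\dim\tau}$, while you cite the standard orbit-wise smoothness criterion, which is fine). Where you genuinely diverge is in the construction of the witness. The paper uses one nonsingular facet to normalize coordinates, writing $v_1=e_1,\dots,v_{n-1}=e_{n-1}$ and $v_n=a_1e_1+\cdots+a_{n-1}e_{n-1}+te_n$ with $0\le a_i<t$, and then takes $a=e_1+\cdots+e_n$ and $u_i=e_i^\ast$; the only substantive check is $e_i^\ast\in\sigma^\vee$, done via the explicit generators $o_i=te_i^\ast-a_ie_n^\ast$ of $\sigma^\vee$. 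You instead argue intrinsically through the discriminant group $M'/M$: nonsingularity of the facets forces each class $\overline{v_j^\vee}$ to have order exactly $q=[M':M]$, so the group is cyclic with every $\overline{v_j^\vee}$ a generator, and the pair $(a^\circ,\{u^{(j)}\})$ is assembled from the residues $s_j'$; I checked the verifications (integrality of $a^\circ$, $u^{(j)}\in M\cap\sigma^\vee$, the pairings, the determinant $q$) and they all go through. Your route is coordinate-free but costs these extra verifications and uses nonsingularity of all facets where a single one suffices; the paper's normalization is shorter but coordinate-dependent. One small economy: the lower bound $\Phi(a)\ge n$ is not actually needed, since $\lambda(x_\sigma)\ge 0$ by Lemma \ref{lem_lambda}, so $\Phi(a^\circ)\le n$ alone already gives $\lambda(x_\sigma)=0$, which is how the paper concludes.
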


\begin{proof}
First we claim that if $x_\sigma$ is an isolated singularity, then all facets (faces of codimension $1$) of $\sigma$ are nonsingular. Suppose that there is a proper face $\tau$ of $\sigma$ that is singular. Recall that
\begin{equation*}
O(\tau) = \Spec\ k[M\cap \tau^\perp]\cong (k^\ast)^{n-\dim(\tau)}
\end{equation*}
is the $T$-orbit that contains the distinguished point $x_\tau$. Denote by $N_\tau$ the subgroup of $N$ generated by $N\cap \tau$. Then we may choose a splitting of $N$ and write
\begin{equation*}
N=N_\tau\oplus N',\ \tau=\tau'\oplus\{0\},
\end{equation*}
where $\tau'$ is a cone in $(N_\tau)_\mathbb{R}$. Dually, we can decompose $M=M_\tau\oplus M'$. Let
\begin{equation*}
U_\tau=\Spec\ k[M\cap \tau^\vee],
\end{equation*}
and let $U_{\tau'}$ be the affine toric variety corresponding to the cone $\tau'$ and lattice $N_\tau$. With this notation, we have
\begin{equation}
\label{eqn_decomp}
U_\tau\cong \Spec\ k[M_\tau\cap \tau'^\vee]\times \Spec\ k[M']\cong U_{\tau'}\times (k^\ast)^{n-\dim(\tau)}.
\end{equation}

Note that $U_\tau$ is an open subset of $X$ that contains $O(\tau)$.  Since $\tau'$ is a singular cone, the torus-fixed point $x_{\tau'}\in U_{\tau'}$ is a singular point. In this case, the orbit $O(\tau)$, which corresponds via the above isomorphism to $\{x_{\tau'}\}\times {\Spec}\ k[M']$ is a subset of dimension
$n-\dim(\tau)$ contained in the singular locus of $X$ and that contains $0$ in its closure. This contradicts the fact that $0$ is an isolated singular point of X. So we conclude that all facets are nonsingular.

Since $X$ is simplicial, the cone $\sigma$ has only $n$ one-dimensional faces. Assume that $v_1,\ldots,v_n$ are the primitive lattice vectors on these one-dimensional faces. Then $v_1,\ldots,v_{n-1}$ span a facet of $\sigma$, and is therefore nonsingular. By applying an automorphism on $N$ one may assume that $v_1=e_1,\ldots,\ v_{n-1}=e_{n-1}$ and $v_n=a_1e_1+\cdots+a_{n-1}e_{n-1}+te_n$ with $0\leq a_i <t$. Define $a=e_1+\cdots+e_n$.

Note that $o_i:=te_i^\ast-a_ie_n^\ast$ is orthogonal to the facet spanned by $v_1,\ldots,\hat{v}_i,\ldots,v_n$ for every $i$, with $1\leq i\leq n-1$. In fact, the dual cone $\sigma^\vee$ is spanned by $o_1,\ldots,o_{n-1},e_n^\ast$. Since $\langle o_i, a\rangle=t-a_i>0$ and $\langle e_n^\ast, a\rangle=1$, $a$ is in the interior of $\sigma$.

Clearly $e_1^\ast,\ldots,e_n^\ast$ are all in the dual cone $\sigma^\vee$. In fact, each $e_i^\ast$ is on the face spanned by $o_i$ and $e_n^\ast$. Since $\varphi_a(e_i^\ast)=1$, we have $\Phi(a)\leq n$ and hence $\lambda(x_\sigma)=0$. By Proposition \ref{fundamental_prop} we get $\mld(x_\sigma;X)=\dim(X)$.
\end{proof}

\begin{cor}
If $X$ is a two-dimensional affine toric variety, then $\lambda(x_\sigma)=0$.
\end{cor}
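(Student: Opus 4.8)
The plan is to deduce this from Proposition \ref{prop_simplicial_isolated}. The two inputs I need are: (a) a two-dimensional strongly convex rational polyhedral cone $\sigma$ is automatically simplicial, being generated by precisely its two extremal rays; and (b) when $\dim\sigma=2$, the torus-fixed point $x_\sigma$ is the only candidate for a singular point of $X=X(\sigma)$. Recall that throughout the section $\sigma$ spans $N_\mathbb{R}\cong\mathbb{R}^2$, so $\dim\sigma=2$.

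For (b) I would argue as follows. The set $\textnormal{Sing}(X)$ is a closed $T$-invariant subset, hence a union of orbit closures $\overline{O(\tau)}$ over certain faces $\tau$ of $\sigma$. The faces of $\sigma$ are $\sigma$ itself, its two rays $\rho_1,\rho_2$, and $\{0\}$. The big torus $O(\{0\})=T$ is smooth, and for a ray $\rho_i$ the open chart $U_{\rho_i}=\Spec\ k[M\cap\rho_i^\vee]\subseteq X$ is isomorphic to $\mathbb{A}^1\times k^\ast$ (a ray is a smooth one-dimensional cone), so $X$ is smooth along $O(\rho_i)$; since the generic point of $\overline{O(\rho_i)}$ lies in $O(\rho_i)$, this forces $\overline{O(\rho_i)}\not\subseteq\textnormal{Sing}(X)$. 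Hence $\textnormal{Sing}(X)\subseteq\{x_\sigma\}$. If $X$ is actually smooth, Remark \ref{relation_smooth} gives $\mld(x_\sigma;X)=\dim(X)=2$, so $\lambda(x_\sigma)=0$ by Proposition \ref{fundamental_prop}; otherwise $\textnormal{Sing}(X)=\{x_\sigma\}$, so $x_\sigma$ is an isolated singularity of the simplicial variety $X$, and Proposition \ref{prop_simplicial_isolated} gives $\lambda(x_\sigma)=0$.

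If one prefers to avoid the structural input (b), the same conclusion follows directly from Corollary \ref{cor_toric}: after an automorphism of $N$ one may take $\sigma$ to be spanned by $e_1$ and $a_1e_1+te_2$ with $t\geq 1$ and $0\leq a_1<t$. Then $a:=e_1+e_2$ lies in $\Int(\sigma)\cap N$ since $a=\frac{t-a_1}{t}e_1+\frac{1}{t}(a_1e_1+te_2)$ with both coefficients strictly positive; the dual basis vectors $e_1^\ast,e_2^\ast$ lie in $M\cap\sigma^\vee$ (one checks $\langle e_1^\ast,e_1\rangle=1$, $\langle e_1^\ast,a_1e_1+te_2\rangle=a_1\geq 0$, $\langle e_2^\ast,e_1\rangle=0$, $\langle e_2^\ast,a_1e_1+te_2\rangle=t>0$), they are linearly independent and span $M_\mathbb{R}$, and $\langle a,e_1^\ast\rangle=\langle a,e_2^\ast\rangle=1$. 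Hence $\Phi(a)\leq 2$, so $\min_{a\in\Int(\sigma)\cap N}\Phi(a)\leq 2$, and combined with $\lambda(x_\sigma)\geq 0$ (Lemma \ref{lem_lambda}) Corollary \ref{cor_toric} forces $\lambda(x_\sigma)=0$.

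There is no genuine obstacle here; the only points needing a little care are the verification that $a$ lies in the interior of $\sigma$ in the direct argument, and — in the first approach — the appeal to the standard description of the singular locus of a toric surface as being contained in the torus-fixed point. I would present the reduction to Proposition \ref{prop_simplicial_isolated} as the main proof and perhaps remark on the direct computation, since the latter is essentially the $n=2$ case of the argument already carried out in the proof of Proposition \ref{prop_simplicial_isolated}.
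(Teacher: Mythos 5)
Your main argument is correct and is essentially the paper's own proof: a two-dimensional cone is automatically simplicial, its facets (the two rays) are nonsingular, so the singular locus is contained in $\{x_\sigma\}$, and Proposition \ref{prop_simplicial_isolated} (with Remark \ref{relation_smooth} covering the smooth case) gives $\lambda(x_\sigma)=0$. Your alternative computation via Corollary \ref{cor_toric} with $a=e_1+e_2$ and $e_1^\ast,e_2^\ast\in M\cap\sigma^\vee$ is also valid, but, as you observe, it simply reproduces the $n=2$ case of the argument in the proof of Proposition \ref{prop_simplicial_isolated}.
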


\begin{proof}
Observe that every two-dimensional affine toric variety is simplicial, and that every facet is one-dimensional, hence nonsingular. Thus $x_\sigma$ is an isolated singularity of a simplicial toric variety. The conclusion follows immediately from Proposition \ref{prop_simplicial_isolated}.
\end{proof}

The above examples might suggest that $\lambda$ is always $0$, or $\mld$ is always equal to $\dim(X)$, for any toric variety. But this is not true in general, as we will see shortly. Now let us look at an example of a different type. We discuss this class of examples in detail in the next section (see Example \ref{exmp_binomial} for details); we refer to this section for the proof of the formula that we use.

\begin{exmp}
Let $X\subset \mathbb{A}^{n+1}$ be the hypersurface defined by the binomial function
\begin{equation*}
f=x_1x_2\cdots x_n-y^{n-1}
\end{equation*}
for some $n\geq 3$. The dimension of $X$ is $n$ while the dimension of $X_{\textnormal{sing}}$ is $n-2$. Since $X$ is Cohen-Macaulay, being a hypersurface, it follows from Serre's criterion that $X$ is normal. It is a general fact that $X$ is a toric variety if it is normal and defined by binomials (\cite[Lemma 1.1]{St95}). By applying formula (\ref{eqn_binomial}) below from Section \ref{sec5}, we immediately get $\lambda=1$.
\end{exmp}

\subsection{Extension to arbitrary closed points}

$\\$
Corollary \ref{cor_toric} gives the formula that computes the invariant $\lambda$ associated to the torus-invariant point for an affine toric variety $X$ over $k$ that corresponds to a cone that spans $N_\mathbb{R}$. Now we show how this computation is generalized to an arbitrary closed point of a toric variety $X$. We start by proving the following proposition:

\begin{prop}
\label{prop_product}
Let $X$ and $Y$ be two varieties over $k$ such that $Y$ is smooth. For any closed points $x\in X$ and $y\in Y$, the invariant $\lambda$ for the closed point $(x,y)\in X\times Y$ is equal to $\lambda(x)$.
\end{prop}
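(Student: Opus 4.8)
The plan is to reduce to the definition of $\lambda$ via $\lambda_m$ and exploit the product structure on arc spaces. Write $d=\dim(X)$ and $e=\dim(Y)$, so that $\dim(X\times Y)=d+e$. By Remark \ref{arc_product} we have a commutative square relating $(X\times Y)_\infty\cong X_\infty\times Y_\infty$ and $(X\times Y)_m\cong X_m\times Y_m$, with $\psi_m^{X\times Y}$ corresponding to $\psi_m^X\times\psi_m^Y$; similarly $\pi^{X\times Y}$ corresponds to $\pi^X\times\pi^Y$. Hence $(\pi^{X\times Y})^{-1}(x,y)$ is identified with $(\pi^X)^{-1}(x)\times(\pi^Y)^{-1}(y)$, and applying $\psi_m^{X\times Y}$ gives
\begin{equation*}
\psi_m^{X\times Y}\big((\pi^{X\times Y})^{-1}(x,y)\big)=\psi_m^X\big((\pi^X)^{-1}(x)\big)\times\psi_m^Y\big((\pi^Y)^{-1}(y)\big).
\end{equation*}

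Next I would compute the dimension of the right-hand side. Since dimension is additive for products of schemes of finite type over $k$, we get
\begin{equation*}
\dim\psi_m^{X\times Y}\big((\pi^{X\times Y})^{-1}(x,y)\big)=\dim\psi_m^X\big((\pi^X)^{-1}(x)\big)+\dim\psi_m^Y\big((\pi^Y)^{-1}(y)\big).
\end{equation*}
Because $Y$ is smooth, Remark \ref{rem_smooth} (via Corollary \ref{jet_smooth}) tells us $\dim\psi_m^Y\big((\pi^Y)^{-1}(y)\big)=me$; equivalently $\lambda_m^Y(y)=0$ for every $m$. Therefore
\begin{equation*}
\lambda_m^{X\times Y}(x,y)=m(d+e)-\Big(\dim\psi_m^X\big((\pi^X)^{-1}(x)\big)+me\Big)=md-\dim\psi_m^X\big((\pi^X)^{-1}(x)\big)=\lambda_m^X(x).
\end{equation*}
Taking $m\gg 0$ and invoking Lemma \ref{lem_lambda} and Definition \ref{defn_lambda}, we conclude $\lambda(x,y)=\lambda(x)$.

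The only genuinely delicate point is justifying that the image of a product cylinder under $\psi_m^{X\times Y}$ is the product of the images, and that its dimension is the sum of the two dimensions. The first is immediate from the identification of maps in Remark \ref{arc_product}: for a map of the form $g\times h$, the image of a product set $A\times B$ is $g(A)\times h(B)$. For the dimension statement one uses that $\psi_m^X\big((\pi^X)^{-1}(x)\big)$ is a constructible subset of the finite-type scheme $X_m$ (it is the image under $\pi_{m',m}$ of a constructible subset of $X_{m'}$ for $m'\gg 0$, by Lemma \ref{lem_lambda} and standard Chevalley-type arguments, or directly a cylinder's level-$m$ image), likewise for $Y$, and that $\dim(S\times S')=\dim(S)+\dim(S')$ for constructible sets over an algebraically closed field. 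This is routine, so the proposition follows.
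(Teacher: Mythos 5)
Your proposal is correct and follows essentially the same route as the paper: identify the fiber over $(x,y)$ with the product of fibers via Remark \ref{arc_product}, use additivity of dimension, and kill the smooth factor's contribution via Remark \ref{rem_smooth} (equivalently $\lambda(y)=0$). Your write-up is in fact slightly more careful than the paper's, which loosely writes $\psi_m^{X\times Y}((X\times Y)_\infty)$ where it means the image of the fiber over $(x,y)$, but the argument is the same.
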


\begin{proof}
By definition, we have
\begin{equation*}
\lambda((x,y))=(\dim(X)+\dim(Y))m-\dim(\psi_m^{X\times Y} ((X\times Y)_\infty),
\end{equation*}
for $m$ large enough. According to Remark \ref{arc_product}, this is equal to
\begin{equation*}
(\dim(X)+\dim(Y))m-\dim(\psi_m^X(X_\infty))- \dim(\psi_m^Y(Y_\infty))=\lambda(x)+\lambda(y).
\end{equation*}

According to Remark \ref{relation_smooth}, $\lambda(y)=0$. Therefore, $\lambda((x,y))=\lambda(x)$.
\end{proof}

The key fact is that any closed point is in the orbit of the distinguished point of a face of $\sigma$. More precisely, let $X$ be the affine toric variety associated to a cone $\sigma$ and $O(\tau)$ be the $T$-orbit that contains the distinguished point $x_\tau$ for some face $\tau$ of $\sigma$. Then $X=\cup_\tau O(\tau)$ with $\tau$ varying over all faces of $\sigma$. We refer the reader to \cite[Chapter 3]{Ful93} for details. Then $\lambda(p)=\lambda(x_\tau)$ for every $p$ in $O(\tau)$, because there is an element $t\in T$ that maps $p$ to $x_\tau$, and such that multiplication by $t$ gives an automorphism of $X$. Therefore, it is enough to compute $\lambda(x_\tau)$, where $\tau$ is a face of $\sigma$.

Following the notation in the proof of Proposition \ref{prop_simplicial_isolated}, we have an open subset $U_\tau\cong U_{\tau'}\times (k^\ast)^{n-\dim(\tau)}$ of $X$ that contains $O(\tau)$. The point $x_\tau\in O(\tau)$ is mapped to $(x_{\tau'},\underline{1})$ by the isomorphism, where $x_{\tau'}$ is the torus-invariant point in $U_{\tau'}$. Therefore, we are reduced to computing $\lambda((x_{\tau'},\underline{1}))$ and we obtain the following corollary by using Proposition \ref{prop_product}:

\begin{cor}
With the above notation, if $X=X(\sigma)$ is an affine toric variety over $k$ of dimension $n$ and $\tau$ is a face of $\sigma$, then we have $\lambda(x_\tau)=\lambda(x_{\tau'})$. Equivalently, we have
\begin{equation*}
\mld (x_\tau;X)-n=\mld (x_{\tau'};U_{\tau'})-\dim(\tau).
\end{equation*}
\end{cor}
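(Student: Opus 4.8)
The plan is to combine the locality of the invariant $\lambda$ with Proposition \ref{prop_product}. First I would make precise the reduction indicated just before the statement: since $U_\tau$ is an open neighbourhood of the closed point $x_\tau$ in $X$, the invariant $\lambda(x_\tau)$ is the same whether $x_\tau$ is regarded as a point of $X$ or of $U_\tau$. Indeed, any arc $\gamma\colon\Spec k[\![t]\!]\to X$ with $\gamma(0)=x_\tau$ factors through $U_\tau$: its image is $\{x_\tau,\eta\}$ with $x_\tau\in\overline{\{\eta\}}$, and the nonempty open subset $\overline{\{\eta\}}\cap U_\tau$ of the irreducible set $\overline{\{\eta\}}$ is dense, hence contains the generic point $\eta$. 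Thus the open immersion $(U_\tau)_\infty\hookrightarrow X_\infty$ identifies $\pi^{-1}(x_\tau)$ computed in $X$ with the one computed in $U_\tau$, and similarly $(U_\tau)_m\hookrightarrow X_m$ identifies the two images $\psi_m(\pi^{-1}(x_\tau))$; since $\dim U_\tau=\dim X=n$, the numbers $\lambda_m(x_\tau)$ agree for every $m$, and therefore so do their stable values.

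Next I would invoke the product decomposition $U_\tau\cong U_{\tau'}\times(k^\ast)^{n-\dim(\tau)}$ from (\ref{eqn_decomp}), under which $x_\tau$ corresponds to $(x_{\tau'},\underline{1})$, where $x_{\tau'}$ is the torus-invariant point of the affine toric variety $U_{\tau'}$ (which exists because $\tau'$ spans $(N_\tau)_\mathbb{R}$). Since $(k^\ast)^{n-\dim(\tau)}$ is smooth, Proposition \ref{prop_product} applied with $U_{\tau'}$ in the role of $X$ and $(k^\ast)^{n-\dim(\tau)}$ in the role of $Y$ gives $\lambda\big((x_{\tau'},\underline{1})\big)=\lambda(x_{\tau'})$. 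Chaining the two steps yields $\lambda(x_\tau)=\lambda\big((x_{\tau'},\underline{1})\big)=\lambda(x_{\tau'})$, which is the first assertion.

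For the equivalent formulation I would apply Proposition \ref{fundamental_prop} on both sides: it gives $\lambda(x_\tau)=\mld(x_\tau;X)-n$ and $\lambda(x_{\tau'})=\mld(x_{\tau'};U_{\tau'})-\dim(U_{\tau'})$, and $\dim(U_{\tau'})$ equals the rank of $N_\tau$, which is $\dim(\tau)$ since $N_\tau$ is generated by $N\cap\tau$. Equating and rearranging then gives $\mld(x_\tau;X)-n=\mld(x_{\tau'};U_{\tau'})-\dim(\tau)$. The only genuinely non-formal point is the locality statement in the first step — that arcs centered at $x_\tau$ never leave $U_\tau$ — and once that is in hand the conclusion is a direct application of Propositions \ref{prop_product} and \ref{fundamental_prop}.
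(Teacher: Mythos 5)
Your proof is correct and follows essentially the same route as the paper: restrict to the open set $U_\tau\cong U_{\tau'}\times(k^\ast)^{n-\dim(\tau)}$, identify $x_\tau$ with $(x_{\tau'},\underline{1})$, apply Proposition \ref{prop_product} with the smooth factor $(k^\ast)^{n-\dim(\tau)}$, and translate via Proposition \ref{fundamental_prop}. The only difference is that you spell out the locality step (arcs centered at $x_\tau$ factor through the open neighbourhood $U_\tau$, so $\lambda$ is unchanged by the restriction), which the paper uses implicitly; that is a welcome addition but not a different argument.
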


\section{Mather minimal log discrepancy of hypersurfaces}
\label{sec5}
This section is devoted to the computation of the Mather minimal log discrepancy of the origin on a hypersurface whose defining equation has fixed monomials and very general coefficients.

\subsection{Basic setup}
\label{sec_setup}
$\\$
In this subsection, we fix our notation and give the criterion on the defining equation such that the hypersurface is integral. Throughout the section, we assume that $X$ is a hypersurface in
\begin{equation*}
\mathbb{A}^{n+1}=\Spec\ \mathbb{C}[x_1,\ldots,x_{n+1}],
\end{equation*}
for some positive integer $n$, over the field of complex numbers. We have $\dim(X)=n$. After a change of coordinates, we may and will assume that the origin of $\mathbb{A}^{n+1}$ is contained in $X$.

Let $f$ be the defining equation of $X$ in $\mathbb{A}^{n+1}$. We can write
$f=\sum _{i=1} ^N a_{I^i} x^{I^i}$, where $I^i=(I^i_1,I^i_2,\ldots,I^i_{n+1})$ are multi-indices and $x^{I^i}$ stands for $\prod _{j=1}^{n+1} x_j ^{I_j^i}$. Suppose that all the coefficients $a_{I^i}$ are nonzero. Then $N$, the number of monomials in the polynomial $f$, is at least $2$ when $X$ is integral, unless $X$ is a coordinate plane. We denote by $\mathbb{Z}_+$ the set of positive integers and by $\mathbb{Z}_{\geq 0}$ the set of nonnegative integers.

\begin{defn}
The \emph{support} of a multi-index $I^i$ is $| I^i| := \{j | I_j^i> 0\}$. Given an $(n+1)$-tuple $\alpha=(\alpha_1,\ldots,\alpha_{n+1})\in \mathbb{Z}^{n+1}$ and a multi-index $I$, we define the \emph{product} as $\alpha \cdot I := \sum_{j=1} ^{n+1} \alpha_j I_j$. The \emph{support} of a polynomial $f=\sum _{i=1} ^N a_{I^i} x^{I^i}$, with all $a_{I^i}\neq 0$, is the set
\begin{equation*}
A=\{I^1,\ldots,I^N\}\subset (\mathbb{Z}_{\geq 0})^{n+1}.
\end{equation*}
The \emph{dimension} of $A$ is defined by $\dim(A)=\dim_\mathbb{Q} (\textnormal{Span}_\mathbb{Q}\{A-a\})$, for any $a\in A$.
\end{defn}

\begin{rem}
Clearly the dimension of a support $A$ is independent of the choice of $a$. When $X$ is integral and is not a hyperplane, the support $A$ of $f$ has at least two points, hence $\dim(A)\geq 1$.
\end{rem}

\begin{rem}
We may assume without loss of generality that
\begin{equation*}
\cup_{1\leq i\leq N} |I^i |=\{1,2,\ldots,n+1\}.
\end{equation*}
Otherwise, $X$ is the product of an affine space with a hypersurface of lower dimension. Then by Proposition \ref{prop_product}, computing $\lambda$ for the origin in $X$ is reduced to computing the corresponding $\lambda(0)$ on the lower dimensional hypersurface.
\end{rem}

\begin{rem}
If $0$ is a smooth point, then the invariant $\lambda(0)$ is trivially zero by Remark \ref{relation_smooth}. So we focus on the case where $0$ is a singular point of $X$. In particular, we assume that $X$ is not a hyperplane.
\end{rem}

In order that the hypersurface $X$ contains the origin, we require that $f$ is a polynomial in
\begin{equation*}
(x_1,x_2,\ldots,x_n)\cdot \mathbb{C}[x_1,x_2,\ldots,x_{n+1}],
\end{equation*}
or equivalently, the point $(0,0,\ldots,0)$ is not in the support of $f$. By requiring that $X$ is irreducible and is not a hyperplane, we see that $f$ is not divisible by $x_i$ for each $i$. This means that the support of $f$ contains at least one point in each coordinate plane $x_i=0$. We first characterize those $A$, such that a general polynomial with support $A$ defines an integral hypersurface. We denote by $\textnormal{conv}(A)$ the convex hull of $A$. The following result is a simplified version of \cite[Theorem 3]{Yu16}:

\begin{thm}
\label{thm_integral}
Let $R=\mathbb{C}[x_1^{\pm 1},\ldots,x_{n+1}^{\pm 1}]$ be the Laurent polynomial ring in $n+1$ variables. Then a general polynomial $f$ with support $A$ generates a proper prime ideal in $R$ if and only if one of the following holds:

\item (1) $\dim(A)\geq 2$, or

\item (2) $\dim(A)=1$ and $\textnormal{conv}(A)$ contains only two integral points.
\end{thm}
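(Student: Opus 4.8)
The plan is to obtain the statement as a specialization of \cite[Theorem 3]{Yu16}, recording the reductions that make the equivalence transparent and treating the low-dimensional cases directly. Since $R$ is the coordinate ring of the torus $(\C^\ast)^{n+1}$, every monomial is a unit, so replacing $f$ by $x^{-c}f$ for a fixed $c\in A$ changes neither $(f)$ nor whether it is proper or prime; I would therefore first translate so that $0\in A$. Then $f$ lies in $\C[L]$, where $L:=\mathbb{Z}A\subseteq\mathbb{Z}^{n+1}$ is the lattice spanned by the differences of elements of $A$, and $\mathrm{rank}(L)=\dim(A)$. If $\dim(A)=0$ then $f$ is a single monomial, hence a unit, and $(f)=R$ is not proper — which is why the statement lists only (1) and (2).

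For $\dim(A)=1$ I would argue by hand. Composing the translation with an automorphism of $\mathbb{Z}^{n+1}$ (a monomial change of variables) sending a primitive generator of $L$ to $e_1$, we may assume $A\subset\mathbb{Z}e_1$, so after multiplying by a monomial $f$ becomes a one-variable polynomial $g(x_1)$ with $g(0)\neq0$ and $\deg g=k$, where $k+1$ is the number of lattice points of $\textnormal{conv}(A)$ (its two endpoints are vertices, hence lie in $A$). Since $x_1$ is invertible modulo $g$,
\[
R/(f)\;\cong\;\bigl(\C[x_1]/(g)\bigr)\otimes_{\C}\C[x_2^{\pm1},\ldots,x_{n+1}^{\pm1}],
\]
which is a domain precisely when $g$ is irreducible over $\C$, i.e.\ when $k=1$, i.e.\ when $\textnormal{conv}(A)$ has exactly two lattice points — this is case (2). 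When $k\ge2$ the polynomial $g$ factors, so $(f)$ is not prime. Together with the $\dim(A)=0$ case this proves the \emph{only if} direction, and (incidentally) uses no genericity of the coefficients.

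The substance is the case $\dim(A)=r\ge2$, where I must show $(f)$ is prime for general coefficients. First I would pass to a clean ambient lattice: let $\overline L\supseteq L$ be the saturation of $L$ in $\mathbb{Z}^{n+1}$; since $\overline L$ is primitive, $\mathbb{Z}^{n+1}=\overline L\oplus L'$ and $R=\C[\overline L]\otimes_{\C}\C[L']$ is a Laurent-polynomial extension of $\C[\overline L]\cong\C[z_1^{\pm1},\ldots,z_r^{\pm1}]$, so $(f)$ is prime in $R$ iff it is prime in $\C[\overline L]$. In these coordinates $f$ is a general Laurent polynomial whose support affinely spans $\mathbb{R}^r$ with $r\ge2$, and the claim reduces to: such an $f$ is irreducible, i.e.\ $V(f)\subset(\C^\ast)^r$ is an integral hypersurface. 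This is the content of \cite[Theorem 3]{Yu16}, which I would invoke directly.

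The hard part is therefore this generic-irreducibility statement for $r\ge2$. Two points require care: the uniformity over all supports with $\dim(A)\ge2$, and the case where the support lattice of $f$ (after the reduction above) has index $d>1$ in $\mathbb{Z}^r$ — there $V(f)$ is a degree-$d$ cover of a general hypersurface of a quotient torus and could a priori split into the orbits of the deck group; showing that a general $f$ is not pulled back from a proper quotient torus, so that this does not happen, is the heart of \cite{Yu16}. The remaining ingredients — the passage to the torus, splitting off $\C[L']$, and the analysis in dimensions $0$ and $1$ — are elementary.
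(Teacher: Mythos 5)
Your proposal is correct and takes essentially the same route as the paper, which offers no argument of its own for this statement and simply presents it as a simplified form of \cite[Theorem 3]{Yu16}; like the paper, you rest the essential case $\dim(A)\geq 2$ on Yu's theorem, and your added reductions (translation by a monomial, splitting off $\mathbb{C}[L']$ via the saturation, and the explicit treatment of $\dim(A)=0,1$) are routine and sound. One small wording fix: in the one-dimensional case the monomial change of variables should send to $e_1$ the primitive vector of $\mathbb{Z}^{n+1}$ spanning $\mathbb{R}L$ (a generator of $L$ itself need not be primitive in $\mathbb{Z}^{n+1}$, e.g.\ $A=\{0,2e_1\}$), after which $A\subset \mathbb{Z}e_1$ and your count of lattice points in $\textnormal{conv}(A)$ goes through unchanged.
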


\begin{lem}
\label{lem_prime}
Let $R=\mathbb{C}[x_1^{\pm 1},\ldots,x_{n+1}^{\pm 1}]$ be the Laurent polynomial ring in $n+1$ variables and $f$ be a polynomial in $\mathbb{C}[x_1,\ldots,x_{n+1}]$ that is not divisible by any $x_i$. If $f$ generates a prime ideal in $R$, then $f$ also generates a prime ideal in $\mathbb{C}[x_1,\ldots,x_{n+1}]$.
\end{lem}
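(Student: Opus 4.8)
The plan is to exploit the relationship between $S := \mathbb{C}[x_1,\ldots,x_{n+1}]$ and its localization $R = S[x_1^{-1},\ldots,x_{n+1}^{-1}]$ at the multiplicative set $U$ generated by $x_1,\ldots,x_{n+1}$. Prime ideals of $R$ correspond bijectively (via contraction and extension) to prime ideals of $S$ that meet $U$ trivially, i.e.\ that contain none of the $x_i$. So the natural strategy is: first show that the ideal $(f) \subset S$ is contracted from its extension $(f)R$, and second invoke the localization correspondence to transfer primeness from $R$ back to $S$.

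First I would record what it means for $f$ to be ``not divisible by any $x_i$'': it says precisely that $x_i \notin (f)$ in $S$ for every $i$, equivalently that the principal ideal $(f)S$ survives in the localization, i.e.\ $(f)R \neq R$ (this also matches the hypothesis that $(f)R$ is a proper ideal). The key algebraic point to establish is that $(f)R \cap S = (f)S$. The inclusion $\supseteq$ is clear. For $\subseteq$, take $g \in S$ with $g = f\cdot h/u$ for some $h \in S$ and $u \in U$ a monomial in the $x_i$'s; then $ug = fh$ in $S$. Since $S$ is a UFD and $f$ is coprime to each $x_i$ (hence $f$ shares no irreducible factor with the monomial $u$), $f$ must divide $g$ in $S$, giving $g \in (f)S$. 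This is the heart of the argument and the step I'd expect to require the most care, though it is short: it is exactly the statement that $f$ is a nonzerodivisor whose associated primes avoid $U$, which for a UFD reduces to the coprimality observation just made.

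Given $(f)R \cap S = (f)S$, the rest is the standard localization dictionary: since $(f)R$ is prime in $R$ by hypothesis, its contraction $(f)R \cap S$ is prime in $S$ (contraction of a prime is always prime), and we have just identified that contraction with $(f)S$. Hence $(f)S$ is prime in $S = \mathbb{C}[x_1,\ldots,x_{n+1}]$, which is the assertion. (One should also note $(f)S$ is proper, since $(f)R$ is proper, so we genuinely get a prime ideal rather than the unit ideal.) No deep input is needed beyond unique factorization in the polynomial ring and the elementary behavior of primes under localization; the only thing to be vigilant about is using the ``not divisible by any $x_i$'' hypothesis in exactly the place where the divisibility argument over the UFD $S$ needs it.
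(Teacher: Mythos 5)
Your proof is correct and takes essentially the same route as the paper: establish the contraction equality $(f)R\cap \mathbb{C}[x_1,\ldots,x_{n+1}]=(f)\mathbb{C}[x_1,\ldots,x_{n+1}]$ via the UFD/coprimality-with-monomials argument, then conclude by the standard fact that the contraction of a prime ideal is prime. One minor slip: ``$f$ is not divisible by any $x_i$'' means $f\notin(x_i)$, not $x_i\notin(f)$ as you state in the preliminary remark, but your key divisibility step uses the correct form of the hypothesis, so the argument is unaffected.
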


\begin{proof}
The assertion follows from the fact that
\begin{equation*}
f\cdot \mathbb{C}[x_1,\ldots,x_{n+1}]=f\cdot \mathbb{C}[x_1^{\pm 1},\ldots,x_{n+1}^{\pm 1}]\cap \mathbb{C}[x_1,\ldots,x_{n+1}].
\end{equation*}
This follows easily, using the fact that $\mathbb{C}[x_1,\ldots,x_{n+1}]$ is a UFD, from the fact that $f$ is not divisible by any $x_i$.
\end{proof}

\begin{defn}
\label{defn_integral}
A finite subset $A$ of $(\mathbb{Z}_{\geq 0})^{n+1}$ is called \emph{integral} if the following conditions hold:

\item (1) $A$ contains at least one point in each coordinate plane $x_i=0$,

\item (2) $A$ does not contain the origin $(0,\ldots,0)$, and

\item (3) $\dim(A)\geq 2$, or $\dim(A)=1$ and $\textnormal{conv}(A)$ contains only two integral points.

Let $|A|$ be the cardinality of $A$. We denote by $F(A)\subset (\mathbb{C}^\ast)^{|A|}$ the set of coefficients, such that a polynomial $f$ with support $A$ and these coefficients generates a prime ideal in the Laurent polynomial ring $R$.
\end{defn}

By Theorem \ref{thm_integral}, the set $F(A)$, for each integral subset $A\subset (\mathbb{Z}_{\geq 0})^{n+1}$, contains a nonempty open subset of $(\mathbb{C}^\ast)^{|A|}$. The following is a direct corollary of Lemma \ref{lem_prime}:

\begin{cor}
\label{cor_integral}
If $f$ is a polynomial with an integral support $A$ and coefficients in $F(A)$, then $f$ defines an integral hypersurface in $\mathbb{A}^{n+1}$ containing the origin.
\end{cor}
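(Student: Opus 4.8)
The plan is to assemble the statement from Lemma~\ref{lem_prime} together with the three conditions in Definition~\ref{defn_integral}. First I would unwind the hypothesis: since the coefficients of $f$ lie in $F(A)$, by the very definition of $F(A)$ the polynomial $f$ generates a prime ideal in the Laurent polynomial ring $R=\C[x_1^{\pm 1},\ldots,x_{n+1}^{\pm 1}]$. (Condition~(3) of Definition~\ref{defn_integral} is what guarantees that $F(A)$ is nonempty, via Theorem~\ref{thm_integral}, but for the corollary itself we only use the assumption that the coefficients of $f$ lie in $F(A)$.)

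Next I would verify the hypothesis of Lemma~\ref{lem_prime}, namely that $f$ is not divisible by any variable $x_i$. By condition~(1) of Definition~\ref{defn_integral}, for each $i$ the support $A$ contains a multi-index $I$ with $I_i=0$; the monomial $a_I x^I$ occurring in $f$ is then not divisible by $x_i$, and since every coefficient of $f$ is nonzero, $f$ itself is not divisible by $x_i$. Applying Lemma~\ref{lem_prime}, I conclude that $f$ generates a prime ideal in the polynomial ring $\C[x_1,\ldots,x_{n+1}]$.

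It then remains to check that $(f)$ is a nonzero proper ideal, so that $X=V(f)\subset \mathbb{A}^{n+1}$ is genuinely a hypersurface, and that $X$ passes through the origin. Since $|A|\geq 2$ (because $\dim(A)\geq 1$ by condition~(3)) and the coefficients are nonzero, $f$ is a nonzero non-constant polynomial; hence $(f)$ is a height-one prime and $X$ is an integral hypersurface of dimension $n$. Finally, condition~(2) says the origin does not belong to $A$, so $f$ has no constant term and $f(0,\ldots,0)=0$, i.e. the origin lies on $X$.

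The argument is essentially bookkeeping, so I do not anticipate a real obstacle. The only step carrying any content is the transition from ``$f$ prime in $R$'' to ``$f$ prime in $\C[x_1,\ldots,x_{n+1}]$'', and this has already been isolated in Lemma~\ref{lem_prime}, whose proof rests only on $\C[x_1,\ldots,x_{n+1}]$ being a UFD and on $f$ being coprime to each $x_i$; condition~(1) of Definition~\ref{defn_integral} is precisely what supplies that coprimality.
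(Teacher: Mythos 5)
Your proof is correct and follows exactly the route the paper intends: the paper states this as a direct corollary of Lemma \ref{lem_prime} without writing out the details, and your argument supplies precisely that bookkeeping (coefficients in $F(A)$ give primality in the Laurent ring, condition (1) gives non-divisibility by each $x_i$ so Lemma \ref{lem_prime} applies, and conditions (2)--(3) give the origin on $X$ and non-triviality of $(f)$).
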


In what follows, we fix an integral subset $A\subset (\mathbb{Z}_{\geq 0})^{n+1}$ with cardinality $N\geq 2$, and assume that the defining equation $f$ has support $A$ and coefficients in $F(A)$. Recall that we write $f=\sum _{i=1} ^N a_{I^i} x^{I^i}$ with all $a_{I^i}\neq 0$. In this case we have $A=\{I^1,\ldots, I^N\}$.

\begin{defn}
For each positive integer $m$, we define as in the previous section the sets $C^m:=\psi_m (\pi^{-1} (0))$ in the $m^{\textnormal{th}}$ jet scheme of $X$, where $0$ is the origin of $\mathbb{A}^{n+1}$. For each $(n+1)$-tuple $\alpha\in \mathbb{Z}^{n+1}$ such that $1\leq \alpha_j\leq m$ for each $j$, we define
\begin{equation*}
C_\alpha ^m:=C^m\cap (\cap_{1\leq j\leq n+1} \cont^{\alpha_j} (x_j)_m).
\end{equation*}
In other words, $C_\alpha ^m$ is a subset of $C^m$ with prescribed order along each $x_j$.
\end{defn}

Now we fix $\alpha$ with $\alpha_j\geq 1$ for each $j$. Let $n_0(\alpha)=\min_{1\leq i \leq N} \{\alpha \cdot I^i\}$. After relabeling we may assume that the minimum is attained by precisely those $i$ with $1\leq i\leq k$ for some $k\geq 1$. Then the image of $f$ under the map
\begin{equation}
\label{eqn_base}
\mathbb{C}[x_1,x_2,\ldots,x_{n+1}]\longrightarrow (\mathbb{C}[x_j^{(s)}|1\leq j\leq n+1,s\geq \alpha_j])[\![t]\!],\ x_j\longmapsto \sum_{s=\alpha_j}^\infty x_j^{(s)}t^s,
\end{equation}
has $t$-order $\geq n_0(\alpha)$ and the coefficient of $t^{n_0(\alpha)}$ is
\begin{equation}
\label{eqn_P0}
P_0(x_1^{({\alpha_1})},x_2^{({\alpha_2})},\ldots,x_{n+1}^{({\alpha_{n+1}})}):=\sum_{i=1}^k a_{I^i}\Pi_{j=1}^{n+1} (x_j^{(\alpha_j)})^{I_j^i}.
\end{equation}
$P_0$ is an element in $\mathbb{C}[x_1^{({\alpha_1})},x_2^{({\alpha_2})},\ldots,x_{n+1}^{({\alpha_{n+1}})}]$. We define the condition $\Delta ^\alpha$, which will be used in the statements of the main result in this section, as follows:

\begin{cond}
\label{open_condition}
We say that the condition $\Delta^\alpha$ holds for $f$ if
\begin{equation*}
\bigcap_{j=1}^{n+1} \mathbb{V}\Big(\frac {\partial P_0}{\partial x_j^{(\alpha_j)}}\Big) \cap \mathbb{V}(P_0)= \emptyset\
\end{equation*}
in the torus
\begin{equation*}
(\mathbb{C}^\ast)^{n+1}= \Spec\ \mathbb{C}[x_1^{({\alpha_1})},x_2^{({\alpha_2})},\ldots, x_{n+1}^{({\alpha_{n+1}})}]_{(x_1^{({\alpha_1})},x_2^{({\alpha_2})},\ldots, x_{n+1}^{({\alpha_{n+1}})})}.
\end{equation*}
\end{cond}

\begin{defn}
The \emph{weight} of a monomial $\prod_{j=1}^{n+1}\prod_{i=1}^{b_j} x_j^{(\beta^j_i)}$ is the sum of the superscripts $\sum_{i,j}\beta_i^j$; the \emph{weight} of a polynomial in $(x_j^{(u)})_{1\leq j\leq {n+1};\ u>0}$ is the smallest weight among its monomials.
\end{defn}

\begin{rem}
\label{rem_order}
It is easy to see that for every $s$, each monomial in the coefficient of $t^s$ in the image of a polynomial under the map (\ref{eqn_base}) has weight $s$.
\end{rem}

\subsection{Main results}

\begin{lem}
\label{lem_first}
For a fixed $\alpha=(\alpha_1,\ldots,\alpha_{n+1})\in (\mathbb{Z}_+)^{n+1}$, the set
\begin{equation*}
F_\alpha :=\{ (a_{I^i})_{1\leq i\leq N} \in (\mathbb{C}^\ast)^N |\textnormal{condition}\ \Delta ^\alpha\textnormal{ is satisfied}\}
\end{equation*}
contains a nonempty open subset of $(\mathbb{C}^\ast)^N$.
\end{lem}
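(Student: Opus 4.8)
The plan is to view condition $\Delta^\alpha$ as a constructible (in fact open) condition on the coefficient vector $(a_{I^i})_{1\leq i\leq N}$ and then exhibit one coefficient vector for which it holds. First I would observe that $P_0$ depends only on the subset of coefficients $a_{I^i}$ with $i\leq k$, i.e.\ those indices realizing the minimum $n_0(\alpha)=\min_i\{\alpha\cdot I^i\}$, so it suffices to produce suitable values for those and let the remaining coefficients be arbitrary nonzero scalars. The locus in $(\mathbb{C}^\ast)^{n+1}$ (with coordinates $x_j^{(\alpha_j)}$) cut out by $P_0=0$ and all $\partial P_0/\partial x_j^{(\alpha_j)}=0$ is closed; condition $\Delta^\alpha$ says this closed set is empty. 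Since emptiness of the common zero locus of a finite set of polynomials whose coefficients vary algebraically in the $a_{I^i}$ is an open condition on the parameter space (the ``bad'' locus, where the intersection is nonempty, is the image under a proper projection of a closed subvariety of $(\mathbb{C}^\ast)^{n+1}\times(\mathbb{C}^\ast)^k$, hence closed), it follows that $F_\alpha$ is open in $(\mathbb{C}^\ast)^N$. So the whole problem reduces to showing $F_\alpha\neq\emptyset$.

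To show nonemptiness I would make a convenient choice of the coefficients $a_{I^i}$ for $i\leq k$. The cleanest route: after the substitution $y_j=x_j^{(\alpha_j)}$, $P_0=\sum_{i=1}^k a_{I^i}\prod_j y_j^{I_j^i}$ is a Laurent polynomial on the torus $(\mathbb{C}^\ast)^{n+1}$ supported on the exponent set $A':=\{I^1,\dots,I^k\}$. By the Euler-type identity, on the torus we have $\sum_{j=1}^{n+1} x_j^{(\alpha_j)}\,\partial P_0/\partial x_j^{(\alpha_j)}=\sum_{i=1}^k (\mathbf{1}\cdot I^i)\,a_{I^i}\prod_j y_j^{I_j^i}$, and more generally any linear combination $\sum_j c_j x_j^{(\alpha_j)}\partial P_0/\partial x_j^{(\alpha_j)}$ is the Laurent polynomial with coefficients $(c\cdot I^i)a_{I^i}$. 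Thus the simultaneous vanishing of $P_0$ and all its partials on the torus forces the vanishing of $\sum_i \ell(I^i) a_{I^i}\prod_j y_j^{I_j^i}$ for every affine-linear functional $\ell$ on $\mathbb{R}^{n+1}$. If the exponents $I^1,\dots,I^k$ are not all equal — which holds automatically in our setting since $k\geq 1$ and, in the feasible case that matters, $k\geq 2$ with distinct $I^i$ — one can pick $\ell$ separating one exponent, say $\ell(I^1)=1$ and $\ell(I^i)=0$ for $i>1$ would require affine independence, but more robustly one reduces to showing that the support $A'$ of $P_0$ is not contained in a single point, and then invokes that a nonzero Laurent polynomial together with all first partials cannot vanish identically on the torus. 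Concretely, I would choose all $a_{I^i}=1$ for $i\leq k$ and argue directly: the common zero set of $P_0$ and $x_j^{(\alpha_j)}\partial P_0/\partial x_j^{(\alpha_j)}$ on the torus is the set where $P_0=0$ and $I^i$-weighted sums vanish; using that $A'$ has at least two distinct points (so there is a linear functional $\ell$ nonconstant on $A'$), the polynomial $Q:=\sum_i(\ell(I^i))a_{I^i}\prod_j y_j^{I_j^i}$ is a nonzero Laurent polynomial with strictly smaller support, and an induction on the number of monomials (base case: a single monomial never vanishes on the torus) shows $P_0$ and all its partials have empty common zero locus on $(\mathbb{C}^\ast)^{n+1}$.

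The main obstacle I anticipate is the base/degenerate case of this induction and the bookkeeping needed to guarantee that $A'=\{I^1,\dots,I^k\}$ genuinely has at least two distinct elements for the $\alpha$'s under consideration — if $\alpha$ is not feasible then $k=1$ and $P_0$ is a single monomial, which never vanishes on the torus, so $\Delta^\alpha$ holds vacuously; if $\alpha$ is feasible then $k\geq 2$ and distinctness of the $I^i$ among the support $A$ is built in, so $A'$ has $\geq 2$ points and the separating functional $\ell$ exists. Once that dichotomy is in hand, the reduction ``$P_0\Rightarrow Q$ with smaller support'' terminates, establishing that for the all-ones coefficient choice (with the remaining $a_{I^i}$, $i>k$, arbitrary nonzero) condition $\Delta^\alpha$ holds, hence $F_\alpha\neq\emptyset$; combined with the openness argument of the first paragraph this proves that $F_\alpha$ contains a nonempty open subset of $(\mathbb{C}^\ast)^N$.
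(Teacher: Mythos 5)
The nonemptiness half of your argument contains a genuine error: it is simply not true that the all-ones choice of coefficients (or any coefficient choice made independently of the combinatorics of $A$) always satisfies condition $\Delta^\alpha$, so no correct induction can establish it. Concretely, take $f=x_1^2+x_1x_2+x_1x_3+x_2x_3+x_2^3+x_3^3$ with all coefficients equal to $1$ and $\alpha=(1,1,1)$; the minimum of $\alpha\cdot I^i$ is attained exactly by the first four monomials, so $P_0=(x_1^{(1)})^2+x_1^{(1)}x_2^{(1)}+x_1^{(1)}x_3^{(1)}+x_2^{(1)}x_3^{(1)}=(x_1^{(1)}+x_2^{(1)})(x_1^{(1)}+x_3^{(1)})$, which together with all its partials vanishes at the torus point $(1,-1,-1)$, so $\Delta^\alpha$ fails. (The paper's own example of the curve $x^2+2xy+y^2+y^3$ makes the same point: whether $\Delta^\alpha$ holds genuinely depends on the coefficients, not just on the support.) The place where your induction breaks is the reduction from $P_0$ to $Q=\sum_i \ell(I^i)a_{I^i}y^{I^i}$: at a hypothetical common zero you only know that the \emph{value} of $Q$ vanishes, not that $Q$ and all of its partials vanish, so the inductive hypothesis cannot be reapplied; the Euler-type relations only show that the vector $(a_{I^i}y^{I^i})_i$ is orthogonal to the affine span of the exponents, which forces a contradiction only when the $I^i$ with $i\le k$ are affinely independent. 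For affinely dependent exponent sets there really are bad coefficient vectors, and this is exactly why a genericity statement (the paper uses the Kleiman--Bertini theorem applied to the base-point-free linear system spanned by the monomials $\prod_j(x_j^{(\alpha_j)})^{I^i_j}$ on the complement of the coordinate hyperplanes) is needed rather than a single explicit choice.

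Your openness argument also has a flaw that matters for the logic of the statement: the projection $(\mathbb{C}^\ast)^{n+1}\times(\mathbb{C}^\ast)^{N}\to(\mathbb{C}^\ast)^{N}$ is not proper, since the torus factor is not complete, so the image of the closed incidence variety is only constructible (Chevalley), not closed, and $F_\alpha$ is not obviously open. This cannot be repaired by combining ``bad locus constructible'' with ``$F_\alpha\neq\emptyset$'': a constructible set with nonempty complement can still contain a dense open subset, in which case its complement contains no nonempty open set at all. What the lemma requires is that the bad locus be contained in a \emph{proper closed} subset of $(\mathbb{C}^\ast)^{N}$, i.e.\ a genuine genericity statement, and that is precisely what the Bertini-type argument delivers in one step; an incidence-variety dimension count could substitute for it, but would need an actual bound on the fiber dimensions rather than an appeal to properness.
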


\begin{proof}
We use the Kleiman-Bertini Theorem in characteristic zero, which states that the general element of a linear system of divisors on a variety $Y$ is nonsingular away from the base locus of the linear system and the singular locus of $Y$.

Let $Y$ be the affine space $\mathbb{C}^{n+1}=\Spec\ \mathbb{C}[x_1^{({\alpha_1})},x_2^{({\alpha_2})},\ldots,x_{n+1}^{({\alpha_{n+1}})}]$ and $Z$ be the hypersurface in $Y$ defined by the polynomial $P_0=\sum_{i=1}^k a_{I^i}\Pi_{j=1}^{n+1} (x_j^{(\alpha_j)})^{I_j^i}$. Note that the left-hand side of the equation in condition $\Delta^\alpha$ is the singular locus of $Z$. Therefore, it suffices to show that for a general choice of coefficients, $Z$ will be nonsingular away from the coordinate planes.

The linear system of divisors $\mathbb{H}$ on Y consisting of hypersurfaces defined by polynomials of the form 
\begin{equation*}
p=\sum_{i=1}^k a_{I^i}\Pi_{j=1}^{n+1} (x_j^{(\alpha_j)})^{I_j^i}, 
\end{equation*}
for $a_{I^i}\in\mathbb{C}$, is clearly base point free away from the coordinate planes because each monomial $x^{I^i}$ is already so. By the Kleiman-Bertini Theorem, the hypersurface $Z$ is nonsingular away from the coordinate planes for a general choice of $(a_{I^i})_{1\leq i\leq N}\in(\mathbb{C}^\ast)^N$.
\end{proof}

We now study the image of $f$ under the map (\ref{eqn_base}). It suffices to study the image of each monomial of $f$. The image of $f$ is just the sum of the images of all the monomials of $f$.

\begin{lem}
\label{lem_third}
Fix $\alpha=(\alpha_1,\ldots,\alpha_{n+1})\in (\mathbb{Z}_+)^{n+1}$. For each $s\geq 1$, the coefficient of $t^s$ in the image of $x_1^{b_1}x_2^{b_2}\ldots x_{n+1}^{b_{n+1}}$ under the map (\ref{eqn_base}) is equal to
\begin{equation}
\label{eqn_monomial_image}
\sum_{c_{i,j}} \Big(\prod_{i=1}^{n+1} b_i! \cdot \prod_{j\geq \alpha_i} \frac{(x_i^{(j)})^{c_{i,j}}}{c_{i,j}!}\Big),
\end{equation}
where the sum is over all $c_{i,j}$ with $1\leq i\leq n+1$ and $j\geq \alpha_i$ such that
\begin{equation*}
\sum_{i,j}j\cdot c_{i,j}=s\textnormal{ and } \sum_{j\geq \alpha_i} c_{i,j}=b_i\textnormal{ for all }i.
\end{equation*}
\end{lem}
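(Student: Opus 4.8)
The plan is to obtain (\ref{eqn_monomial_image}) by a direct expansion, carried out throughout in the formal power series ring $R_\alpha[\![t]\!]$ with $R_\alpha=\mathbb{C}[x_j^{(s)}\mid 1\le j\le n+1,\ s\ge\alpha_j]$, which is the target of the map (\ref{eqn_base}). First I would treat a single variable: under (\ref{eqn_base}) the power $x_i^{b_i}$ is sent to $\bigl(\sum_{j\ge\alpha_i}x_i^{(j)}t^j\bigr)^{b_i}$, and the multinomial theorem gives
\[
\Bigl(\sum_{j\ge\alpha_i}x_i^{(j)}t^j\Bigr)^{b_i}=\sum_{\substack{(c_{i,j})_{j\ge\alpha_i}\\ \sum_j c_{i,j}=b_i}} b_i!\prod_{j\ge\alpha_i}\frac{(x_i^{(j)})^{c_{i,j}}}{c_{i,j}!}\ t^{\sum_j j\,c_{i,j}}.
\]
Since every superscript $j$ appearing satisfies $j\ge\alpha_i\ge1$, for each fixed power of $t$ only finitely many tuples $(c_{i,j})$ contribute, so this identity is legitimate in $R_\alpha[\![t]\!]$; it also re-derives the weight statement of Remark \ref{rem_order}, as the monomial attached to $t^s$ has weight $\sum_j j\,c_{i,j}=s$.

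Next I would multiply these identities over $i=1,\dots,n+1$. The image of $x_1^{b_1}\cdots x_{n+1}^{b_{n+1}}$ under (\ref{eqn_base}) is the product of the $n+1$ series above, and expanding the product amounts to ranging over a tuple $(c_{i,j})_{1\le i\le n+1,\ j\ge\alpha_i}$ with $\sum_{j\ge\alpha_i}c_{i,j}=b_i$ for every $i$; the term so produced is $\bigl(\prod_i b_i!\prod_{j\ge\alpha_i}(x_i^{(j)})^{c_{i,j}}/c_{i,j}!\bigr)\,t^{\sum_{i,j}j\,c_{i,j}}$. Collecting the coefficient of $t^s$ is exactly the restriction to tuples with $\sum_{i,j}j\,c_{i,j}=s$, and this yields (\ref{eqn_monomial_image}). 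Equivalently, one can induct on the number of variables: the base case is the displayed one-variable identity, and the inductive step factors the monomial as $(x_1^{b_1}\cdots x_n^{b_n})\cdot x_{n+1}^{b_{n+1}}$, applies the inductive hypothesis and the base case to the two factors, and uses the Cauchy product to read off the coefficient of $t^s$ in their product.

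There is no serious obstacle here; the only point demanding care is the bookkeeping of the two families of constraints — the equalities $\sum_{j\ge\alpha_i}c_{i,j}=b_i$ coming from the exponents $b_i$, and the single equality $\sum_{i,j}j\,c_{i,j}=s$ coming from the chosen power of $t$ — which must be kept separate, together with checking that the lower bounds $j\ge\alpha_i$ propagate so that one stays inside $R_\alpha[\![t]\!]$. Because $\alpha_i\ge1$, a nonzero $c_{i,j}$ forces $j\ge1$, so $\sum_{i,j}c_{i,j}\le s$; in particular the sum in (\ref{eqn_monomial_image}) is finite, which is all that is needed for the statement to be well-posed.
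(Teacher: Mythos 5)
Your proposal is correct and follows essentially the same route as the paper: both rest on the multinomial formula, with the constraint $\sum_{i,j} j\,c_{i,j}=s$ coming from the weight of each monomial and $\sum_{j\ge\alpha_i} c_{i,j}=b_i$ coming from the exponents. Your variable-by-variable expansion and Cauchy-product bookkeeping is just a more explicit write-up of the same computation.
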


\begin{proof}
By considering the weight of a monomial $\prod_{i=1}^{n+1}\prod_{j\geq \alpha_i} (x_i^{(j)})^{c_{i,j}}$, it is clear that if this monomial appears with nonzero coefficient in the coefficient of $t^s$, then we have $\sum_{i,j}j\cdot c_{i,j}=s$. If such a monomial shows up in the image of $x_1^{b_1}x_2^{b_2}\ldots x_{n+1}^{b_{n+1}}$, then it clearly satisfies $\sum_{j\geq \alpha_i} c_{i,j}=b_i$ for all $i$. Moreover, it follows from the multinomial formula that if these conditions are satisfied, then the coefficient of the above monomial is
\begin{equation*}
\prod_{i=1}^{n+1} \frac{b_i!}{\prod_{j\geq \alpha_i} c_{i,j}!}
\end{equation*}
\end{proof}

This lemma shows that the images of two different monomials under the map (\ref{eqn_base}) do not mix together. Thus, the number of monomials in the coefficient of $t^s$ of the image of $f$ is the sum of the numbers of monomials in the coefficient of $t^s$ for the image of each of the monomials of $f$. Similarly, the highest superscript in the coefficient of $t^s$ of the image of $f$ is equal to the maximum of the highest superscript in the coefficient of $t^s$ that appears in the images of all the monomials of $f$.

\begin{lem}
\label{lem_forth}
Fix $\alpha=(\alpha_1,\ldots,\alpha_{n+1})\in (\mathbb{Z}_+)^{n+1}$ and a monomial $x_1^{b_1}x_2^{b_2}\ldots x_{n+1}^{b_{n+1}}$. Then for each $s\geq \sum_{i=1}^{n+1} b_i \alpha_i$, the largest superscript appearing in the coefficient of $t^s$ in the image of $x_1^{b_1}x_2^{b_2}\ldots x_{n+1}^{b_{n+1}}$ under the map (\ref{eqn_base}) is equal to $s-\mu$ for some fixed number $\mu$. Moreover, this largest superscript only appears in the monomials of the form
\begin{equation*}
(x_1^{(\alpha_1)})^{b_1}(x_2^{(\alpha_2)})^{b_2}\ldots(x_j^{(\alpha_j)})^{b_j-1}\ldots(x_{n+1}^{(\alpha_{n+1})})^{b_{n+1}} \cdot x_j^{(s-\mu)},
\end{equation*}
for some $j$ such that $\alpha_j=\max_{1\leq i\leq n+1} \alpha_i$, and we have $\mu=\sum_{i=1}^{n+1} b_i \alpha_i-\alpha_j$.
\end{lem}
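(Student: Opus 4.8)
The plan is to argue directly from the combinatorial description of the $t^s$-coefficient furnished by Lemma \ref{lem_third}. That coefficient is a sum, with positive rational (hence nonzero) coefficients, of the monomials $\prod_{i=1}^{n+1}\prod_{j\ge\alpha_i}(x_i^{(j)})^{c_{i,j}}$ attached to the nonnegative integer arrays $(c_{i,j})$ subject to $\sum_{i,j} j\,c_{i,j}=s$ and $\sum_{j\ge\alpha_i}c_{i,j}=b_i$ for each $i$. Distinct arrays give distinct monomials, so there is no cancellation, and the largest superscript occurring in the coefficient of $t^s$ is exactly the maximum, over all such arrays, of $\max\{j:c_{i,j}>0\}$. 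Thus the lemma becomes an extremal problem about these arrays, and I would set $\mu:=\sum_{i=1}^{n+1}b_i\alpha_i-\alpha_j$, where $j$ is an index with $b_j\ge1$ at which $\alpha_j$ is largest (this is the index appearing in the statement).

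For the upper bound, fix a valid array and let $J$ be its largest superscript, attained at some index $i_0$; since $c_{i_0,J}\ge1$ we have $b_{i_0}\ge1$. Remove one $x_{i_0}$-factor sitting at level $J$: the remaining $b_i$ factors of $x_i$ for $i\ne i_0$ and $b_{i_0}-1$ factors of $x_{i_0}$ each contribute at least $\alpha_i$ to the total weight, so $s=\sum_{i,j}j\,c_{i,j}\ge J+\big(\sum_{i=1}^{n+1} b_i\alpha_i-\alpha_{i_0}\big)$, whence $J\le s-\sum_{i=1}^{n+1} b_i\alpha_i+\alpha_{i_0}\le s-\mu$, using $\alpha_{i_0}\le\alpha_j$. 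For attainment, take $c_{i,\alpha_i}=b_i$ for $i\ne j$, $c_{j,\alpha_j}=b_j-1$ and $c_{j,s-\mu}=1$; the count conditions are clear, the weight condition is the definition of $\mu$, and $s-\mu\ge\alpha_j$ holds because $s\ge\sum_{i=1}^{n+1} b_i\alpha_i$, so this array is valid and produces exactly a monomial of the asserted shape with top superscript $s-\mu$.

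It remains to see that $s-\mu$ occurs \emph{only} in monomials of that shape, which I would extract from the equality case of the estimate above: if a valid array has $J=s-\mu$ with top index $i_0$, then $\alpha_{i_0}\ge\alpha_j$, so $\alpha_{i_0}=\alpha_j$ is maximal (hence $i_0$ is a legitimate choice of $j$), and the inequality ``each leftover factor weighs $\ge\alpha_i$'' must be an equality, forcing every leftover factor down to its minimal level; the only remaining freedom is the single extracted factor at level $s-\mu$, i.e. $c_{i_0,s-\mu}=1$ and $c_{i_0,\alpha_{i_0}}=b_{i_0}-1$ (when $s=\sum_{i=1}^{n+1} b_i\alpha_i$ these merge and the monomial is $\prod_{i=1}^{n+1}(x_i^{(\alpha_i)})^{b_i}$, still of the stated form). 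The only real obstacle is the bookkeeping in this last step — tracking exactly which inequalities in the weight estimate are forced to be equalities, together with the minor point that the index carrying the maximal $\alpha_j$ must be one actually present in the monomial; everything else is a direct application of Lemma \ref{lem_third} and counting.
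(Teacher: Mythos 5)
Your proposal is correct and takes essentially the same route as the paper: both reduce via Lemma \ref{lem_third} to an extremal problem on the exponent arrays and settle it by an elementary weight computation (the paper phrases this as an exchange argument on the optimization problem, you as an upper bound plus explicit attainment plus equality-case analysis, which amounts to the same thing). Your restriction of the maximizing index $j$ to those with $b_j\geq 1$ is the correct reading of the statement (and the one actually used later through the condition $I^i_j>0$); the paper's proof implicitly makes the same assumption.
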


\begin{proof}
According to Lemma \ref{lem_third}, the coefficient of $t^s$ in the image of the monomial $x_1^{b_1}x_2^{b_2}\ldots x_{n+1}^{b_{n+1}}$ consists of monomials of the form $\prod_{i=1}^{n+1}\prod_{j=1}^{b_i} x_i^{(\beta^i_j)}$, with $\beta^i_j\geq \alpha_i$ for every $i$ and $j$ and such that $\sum_{i,j}\beta^i_j =s$. When $s<\sum_{i=1}^{n+1}b_i\alpha_i$, there is no such monomial. Hence we require that $s\geq \sum_{i=1}^{n+1}b_i\alpha_i$. When $s=\sum_{i=1}^{n+1}b_i\alpha_i$, there is only one monomial
\begin{equation*}
(x_1^{(\alpha_1)})^{b_1}(x_2^{(\alpha_2)})^{b_2}\ldots(x_j^{(\alpha_j)})^{b_j}\ldots(x_{n+1}^{(\alpha_{n+1})})^{b_{n+1}}
\end{equation*}
in the coefficient of $t^s$. Hence the largest superscript that shows up in this case is equal to $\max_i \alpha_i$. In what follows, we assume that $s>\sum_{i=1}^{n+1}b_i\alpha_i$. Then the largest superscript that appears in the coefficient of $t^s$ is given by the following optimization problem:
\begin{eqnarray*}
 \max &&\max_{i,j}\{\beta^i_j\}\\
\textnormal{s.t.} && \beta^i_j\geq \alpha_i \textnormal{ for each } i\\
\textnormal{and}&& \sum_{i,j}\beta^i_j =s.
\end{eqnarray*}

Let $(\bar\beta^i_j)_{i,j}$ give the optimal solution to this optimization problem. We claim that there exist $i_0$ and $j_0$, with $1\leq i_0\leq n+1$ and $1\leq j_0\leq b_{i_0}$, such that $\bar\beta^i_j=\alpha_i$ if and only if $(i,j)\neq (i_0,j_0)$. First we show that if the maximum is attained by $\bar\beta^{i_0}_{j_0}$, then we have $\bar\beta^{i_0}_{j_0}>\alpha_{i_0}$. By relabeling, we assume that $\alpha_1=\max_i \alpha_i$. Consider another feasible solution $(\tilde\beta^i_j)$ to the optimization problem, with $\tilde\beta^1_1=\alpha_1+s-\sum_{i=1}^{n+1}b_i\alpha_i$ and $\tilde\beta^i_j=\alpha_i$ for $(i,j)\neq (1,1)$. Then clearly $\max_{i,j}\{\tilde\beta^i_j\}=\tilde\beta^1_1>\max_i \alpha_i$. Hence
\begin{equation*}
\bar\beta^{i_0}_{j_0}= \max_{i,j}\{\bar\beta^i_j\}\geq \max_{i,j}\{\tilde\beta^i_j\}>\alpha_{i_0}.
\end{equation*}
Now suppose contrary to our claim, that we have another pair $(i_1,j_1)\neq (i_0,j_0)$, such that $\bar\beta^{i_1}_{j_1}> \alpha_{i_1}$. We define $\beta^i_j=\bar\beta^i_j$ if $(i,j)\neq (i_0,j_0), (i_1,j_1)$, $\beta^{i_0}_{j_0}=\bar\beta^{i_0}_{j_0} +1$ and $\beta^{i_1}_{j_1}=\bar\beta^{i_1}_{j_1}-1$. Then clearly $(\beta^i_j)$ is also feasible, while $\max_{i,j}\{\beta^i_j\}>\max_{i,j}\{\bar\beta^i_j\}$. This contradicts our choice of $(\bar\beta^i_j)$.

The above discussion shows that the largest superscript that appears in a monomial in the coefficient of $t^s$ shows up only in monomials of the form
\begin{equation*}
(x_1^{(\alpha_1)})^{b_1}(x_2^{(\alpha_2)})^{b_2}\ldots(x_j^{(\alpha_j)})^{b_j-1}\ldots(x_{n+1}^{(\alpha_{n+1})})^{b_{n+1}} \cdot x_j^{(\beta_j(s))}.
\end{equation*}
Moreover, such monomials appear in the coefficient of $t^s$ for all $j$.

By considering the weight of such a monomial, we get $s=\beta_j(s)-\alpha_j+\sum_{i=1}^{n+1} b_i\alpha_i$. This implies that when $\beta_j(s)$ is the largest superscript, $\alpha_j=\max_i\alpha_i$, and
\begin{equation*}
\beta_j(s)=s-\sum_{i=1}^{n+1} b_i\alpha_i+\alpha_j.
\end{equation*}
This proves the lemma with $\mu=\sum_{i=1}^{n+1} b_i\alpha_i-\max_i\alpha_i$.
\end{proof}

\begin{rem}
With the same proof as above, one can show that for each fixed index $j$, with $1 \leq j\leq n+1$, the largest superscript for $x_j$ appearing in the coefficient of $t^s$  in the image of $x_1^{b_1}x_2^{b_2}\ldots x_{n+1}^{b_{n+1}}$ under the map (\ref{eqn_base}) appears in the monomials of the form
\begin{equation*}
(x_1^{(\alpha_1)})^{b_1}(x_2^{(\alpha_2)})^{b_2}\ldots(x_j^{(\alpha_j)})^{b_j-1}\ldots(x_{n+1}^{(\alpha_{n+1})})^{b_{n+1}} \cdot x_j^{(s-\mu)},
\end{equation*}
where $\mu=\sum_{i=1}^{n+1} b_i\alpha_i-\alpha_j$.
\end{rem}

Combining Lemma \ref{lem_third} and Lemma \ref{lem_forth}, we see that if $P=x_1^{b_1}\ldots x_{n+1}^{b_{n+1}}$ and if $\alpha_{j_0}=\max_i \alpha_i$, then for $s>\sum_{i=1}^{n+1}b_i\alpha_i$, the term with the highest superscript in the coefficient of $t^s$ for the image of $P$ under the map (\ref{eqn_base}) is equal to
\begin{equation*}
\frac{\partial P}{\partial x_{j_0}}(x_1^{(\alpha_1)},\ldots, x_{n+1}^{(\alpha_{n+1})})\cdot x_{j_0}^{(s-\mu)},
\end{equation*}
with $\mu=\sum_{i=1}^{n+1} b_i\alpha_i-\alpha_{j_0}$. When $s=\sum_{i=1}^{n+1}b_i\alpha_i$, the coefficient of $t^s$ is equal to $P(x_1^{(\alpha_1)},\ldots, x_{n+1}^{(\alpha_{n+1})})$. This is the smallest $s$ such that the coefficient of $t^s$ is nonzero. Similarly, for each fixed index $j$, the term with the highest superscript of $x_j$ is equal to
\begin{equation*}
\frac{\partial P}{\partial x_{j}}(x_1^{(\alpha_1)},\ldots, x_{n+1}^{(\alpha_{n+1})})\cdot x_{j}^{(s-\mu')},
\end{equation*}
with $\mu'=\sum_{i=1}^{n+1} b_i\alpha_i-\alpha_{j}$.

Since the image of different monomials of $f$ do not mix, by Lemma \ref{lem_forth} the coefficient of $t^s$ in the image of $f$ under the map (\ref{eqn_base}), for $s>\max_{1\leq i\leq N}\{\alpha\cdot I^i\}$, is of the form
\begin{equation}
\label{eqn_16}
\begin{split}
T_0(x_1^{({\alpha_1})},x_2^{({\alpha_2})},\ldots,x_{n+1}^{({\alpha_{n+1}})})x_{j_0}^{(s-\mu_{j_0})} \qquad\qquad\qquad\\
+ Q_s(x_j^{(\beta_j)} | \alpha_j\leq \beta_j\leq s-\mu_{j_0}; \alpha_{j_0}\leq \beta_{j_0} <s-\mu_{j_0}),
\end{split}
\end{equation}
for some index $j_0$, some $\mu_{j_0}>0$, and polynomials $T_0$ and $Q_s$. In other words, the highest superscript is $s-\mu_{j_0}$ and it is attained at the index $j_0$. Recall that $f=\sum_{i=1}^N a_{I^i} x^{I^i}$ and $n_0(\alpha)=\min_i \{\alpha\cdot I^i\}$ and this minimum is attained by all $1\leq i\leq k$. For each $i$ with $I^i_{j_0}>0$, we compute the product $\alpha\cdot I^i$ and define
\begin{equation*}
n_0(\alpha)':=\min_{I^i_{j_0}>0} \{\alpha\cdot I^i\} \textnormal{ and } \sigma:=\{1\leq i\leq N|I^i_{j_0}>0,\ \alpha\cdot I^i=n_0(\alpha)'\}.
\end{equation*}
Clearly we have $n_0(\alpha)'\geq n_0(\alpha)$. According to the discussion for a monomial above, $n_0(\alpha)'$ is the smallest integer such that the coefficient of $t^{n_0(\alpha)'}$ contains a monomial divisible by $x_{j_0}^{(q)}$ for some $q$ (in fact it is divisible by $x_{j_0}^{(\alpha_{j_0})}$). Moreover, the coefficient of $t^{n_0(\alpha)'}$ is equal to
\begin{equation}
\label{eqn_P1}
P_1(x_1^{({\alpha_1})},x_2^{({\alpha_2})},\ldots,x_{n+1}^{({\alpha_{n+1}})}) +\textnormal{ other terms without } x_{j_0},
\end{equation}
where $P_1(x_1,x_2,\ldots,x_{n+1})=\sum_{i\in \sigma} a_{I^i}x^{I^i}$. Clearly if $n_0(\alpha)'=n_0(\alpha)$, then $\sigma \subset \{1,2,\ldots,k\}$. Otherwise, if $n_0(\alpha)'>n_0(\alpha)$, then $\sigma \subset \{k+1,\ldots,N\}$. By the discussion for the case of a monomial, we have
\begin{equation}
\label{eqn_T0}
T_0(x_1^{({\alpha_1})},x_2^{({\alpha_2})},\ldots,x_{n+1}^{({\alpha_{n+1}})}) =\frac {\partial P_1(x_1^{({\alpha_1})},x_2^{({\alpha_2})},\ldots,x_{n+1}^{({\alpha_{n+1}})})}{\partial x_{j_0}^{(\alpha_{j_0})}}.
\end{equation}

For each fixed index $j$, similar arguments for the highest superscript of $x_j$ also hold.

\begin{rem}
Consider the weight of the first term in equation (\ref{eqn_P1}), we get $I^i\cdot \alpha =n_0(\alpha)'$ for each $i\in \sigma$. Hence each monomial in $T_0(x_1^{({\alpha_1})},x_2^{({\alpha_2})},\ldots,x_{n+1}^{({\alpha_{n+1}})})$ has weight $n_0(\alpha)'-\alpha_{j_0}=I^i\cdot \alpha -\alpha_{j_0}$ for every $i\in \sigma$. Consider the weight of the first term in equation (\ref{eqn_16}) we get $s=I^i\cdot \alpha-\alpha_{j_0}+s-\mu_{j_0}$ for each $i\in \sigma$, or equivalently, $\mu_{j_0}=I^i\cdot \alpha-\alpha_{j_0}$. But $s-\mu_{j_0}$ is the highest superscript appearing in the coefficient of $t^s$. Therefore, we have
\begin{equation}
\label{eqn_mu}
\mu_{j_0}=\underset{1\leq j\leq n+1\textnormal{ with }  I_j^i>0}{\min_{1\leq i\leq N}} \{I^i\cdot \alpha-\alpha_j \},
\end{equation}
and the minimum is attained when $i\in \sigma$ and $j=j_0$. The condition $I_j^i>0$ is equivalent to $\frac{\partial P_1}{\partial x_{j}^{(\alpha_{j})}} \neq 0$.
\end{rem}

Recall that $C^m=\psi_m(\pi^{-1}(0))$ is a contact locus in $X_m$ and
\begin{equation*}
C_\alpha ^m:=C^m\cap (\cap_{1\leq j\leq n+1} \cont^{\alpha_j} (x_j)_m).
\end{equation*}
is a contact locus in $C^m$ for each $(n+1)$-tuple $\alpha$. The following lemma gives an upper bound to the dimension of $C^m_\alpha$.

\begin{lem}
\label{lem_second}
Fix $\alpha=(\alpha_1,\ldots,\alpha_{n+1})\in (\mathbb{Z}_+)^{n+1}$. Let $m$ be an integer such that $\alpha_j\leq m$ for each $j$. If $\min_{1\leq i \leq N} \{\alpha \cdot I^i\}$ is attained by a unique $i$, then $C_\alpha^m=\emptyset$. If $\min_{1\leq i \leq N} \{\alpha \cdot I^i\}$ is attained by at least two different $i$'s and if $(a_{I^i})_{1\leq i\leq N} \in F_\alpha\cap F(A)$, where $F_\alpha$ is as defined in Lemma \ref{lem_first} and $F(A)$ is as defined in Definition \ref{defn_integral}, then we have
\begin{equation}
\label{eqn_14}
\dim C_\alpha^m \leq mn-\sum_{j=1}^{n+1} (\alpha_j-1)-1+ \min_{1\leq i\leq N} \{I^i\cdot \alpha \}-\underset{1\leq j\leq n+1\textnormal{ with }  I_j^i>0}{\min_{1\leq i\leq N}} \{I^i\cdot \alpha-\alpha_j \},
\end{equation}
for all $m$ large enough.
\end{lem}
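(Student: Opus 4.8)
The plan is to dispose of the non-feasible case by hand and to treat the feasible case by bounding $\dim C^m_\alpha$ through an injective linear projection onto an explicit affine variety. First, suppose $n_0(\alpha):=\min_{1\le i\le N}\{\alpha\cdot I^i\}$ is attained by a \emph{unique} index $i_0$. Given $\gamma\in C^m_\alpha$, lift it to an arc $\delta\in\pi^{-1}(0)$ with $\psi_m(\delta)=\gamma$; since $\alpha_j\le m$, membership of $\gamma$ in $\cont^{\alpha_j}(x_j)_m$ forces $\ord_t\delta^\ast(x_j)=\alpha_j$, so each leading coefficient $x_j^{(\alpha_j)}$ of $\delta^\ast(x_j)$ is nonzero. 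Then the $t^{n_0(\alpha)}$-coefficient of $f(\delta)$ equals $P_0(x_1^{(\alpha_1)},\dots,x_{n+1}^{(\alpha_{n+1})})=a_{I^{i_0}}\prod_j(x_j^{(\alpha_j)})^{I^{i_0}_j}$, which is nonzero and contradicts $f(\delta)=0$; hence $C^m_\alpha=\emptyset$, no genericity being needed here.

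Assume now that $n_0(\alpha)$ is attained at least twice and $(a_{I^i})\in F_\alpha\cap F(A)$. I would write $C^m_\alpha=\psi_m(\mathcal A_\alpha)$, where $\mathcal A_\alpha$ is the set of arcs through the origin on $X$ with contact order exactly $\alpha_j$ along $x_j$. For $\gamma\in C^m_\alpha$ with lift $\delta$, the leading tuple $\bar x:=(x_1^{(\alpha_1)},\dots,x_{n+1}^{(\alpha_{n+1})})$ lies in $(\C^\ast)^{n+1}$ and satisfies $P_0(\bar x)=0$; since $\alpha$ is feasible $P_0$ has at least two monomials, so $\{P_0=0\}$ is a proper hypersurface. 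By condition $\Delta^\alpha$ (Condition~\ref{open_condition}), the open subsets $Z_j:=\{\gamma\in C^m_\alpha:\ \partial P_0/\partial x_j^{(\alpha_j)}(\bar x)\neq 0\}$ cover $C^m_\alpha$, so it is enough to bound $\dim Z_j$ for each $j$ with $\partial P_0/\partial x_j^{(\alpha_j)}\not\equiv 0$.

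Fix such a $j$, write $\mu$ for the quantity $\mu_{j_0}$ of (\ref{eqn_mu}), and put $e:=n_0(\alpha)-\alpha_j$. Because $x_j$ occurs in a lowest-weight monomial of $f$, we have $\min_{I^i_j>0}\{\alpha\cdot I^i\}=n_0(\alpha)$, and on $Z_j$ the series $\partial f/\partial x_j(\delta)$ has $t$-order exactly $e$ with leading coefficient the nonzero constant $c:=\partial P_0/\partial x_j^{(\alpha_j)}(\bar x)$. The key local facts about the $t^w$-coefficient $F_w$ of $f(\delta)$ are then: (i) $\partial F_w/\partial x_j^{(w-e)}=c$ and $\partial F_w/\partial x_j^{(s)}=0$ for $s>w-e$, while a weight count shows that $F_w$ contains no term quadratic in $x_j^{(w-e)}$ once $w>n_0(\alpha)$; and (ii) by Lemma~\ref{lem_forth} and the Remark following it, for each $l\neq j$ the largest $x_l$-superscript occurring in $F_w$ is at most $w-\mu'_l\le w-\mu$, where $\mu'_l:=\min_{I^i_l>0}\{\alpha\cdot I^i\}-\alpha_l\ge\mu$. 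In particular $F_w$ involves only jet coordinates for $w\le m+\mu$, so $F_w(\gamma)=0$ for all such $w$ and all $\gamma\in Z_j$ (evaluate on the lift $\delta$). Consequently, for $m$ large, the equations $F_{n_0(\alpha)+1}=0,\dots,F_{m+\mu}=0$ determine $x_j^{(\alpha_j+1)},\dots,x_j^{(m-e+\mu)}$ successively and uniquely in terms of $(x_l^{(t)})_{l\neq j,\ \alpha_l\le t\le m}$ and $x_j^{(\alpha_j)}$, leaving the remaining $e-\mu$ coordinates $x_j^{(m-e+\mu+1)},\dots,x_j^{(m)}$ free. Hence the projection of $Z_j$ forgetting $x_j^{(\alpha_j+1)},\dots,x_j^{(m-e+\mu)}$ is injective, and its image lies in the $\{P_0=0\}$-locus of an affine space of dimension $\sum_{l\neq j}(m-\alpha_l+1)+1+(e-\mu)$; therefore $\dim Z_j\le\sum_{l\neq j}(m-\alpha_l+1)+(e-\mu)$, which, using $\alpha_j+e=n_0(\alpha)$, is exactly $mn-\sum_{l}(\alpha_l-1)-1+n_0(\alpha)-\mu$. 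Taking the maximum over $j$ gives (\ref{eqn_14}).

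The part that needs the most care is the superscript bookkeeping in (ii): the uniform gain $\mu$ in the bounds of Lemma~\ref{lem_forth} is exactly what lets the reconstruction run up to weight $m+\mu$ rather than merely $m$, so that precisely $e-\mu$ top coordinates of $x_j$ remain free, and the numerology $\alpha_j+e=n_0(\alpha)$ is what makes those free coordinates, together with the single relation $P_0=0$, combine into the clean expression (\ref{eqn_14}). One also checks that ``$m$ large enough'' ensures $m\ge n_0(\alpha)-\mu$, so the relevant index ranges are non-degenerate, and that $\{P_0=0\}$ genuinely cuts the dimension of the image by one (valid because $P_0$ is a non-monomial); the hypothesis $(a_{I^i})\in F_\alpha$ is used only to guarantee condition $\Delta^\alpha$, i.e. that the $Z_j$ cover $C^m_\alpha$, and $(a_{I^i})\in F(A)$ only to ensure that $X$ is a variety.
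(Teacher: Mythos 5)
Your argument is correct, and in the feasible case it takes a genuinely different route from the paper. Where the paper stratifies $C^m_\alpha$ recursively as $A_0, A_1, A_2,\dots$ according to the successive vanishing of leading-coefficient polynomials $T_0, T_1,\dots$ (introducing the auxiliary index $j_0$, the quantity $n_0(\alpha)'$, an induction producing new top terms $T_k\cdot x_{j_k}^{(s-\mu_k)}$, and a termination argument showing $\mu_k$ cannot increase forever), you instead use Condition $\Delta^\alpha$ pointwise to cover $C^m_\alpha$ by the finitely many loci $Z_j$ where $\partial P_0/\partial x_j^{(\alpha_j)}(\bar x)\neq 0$, and on each $Z_j$ you run the elimination once: the linearity of $F_w$ in its top $x_j$-coordinate (your facts (i)--(ii), which are exactly the content of Lemma \ref{lem_forth} and the remark following it) makes the forgetting projection injective, and the single relation $P_0=0$ on the kept coordinates gives the extra drop of one; the bookkeeping $\alpha_j+e=n_0(\alpha)$, $e=\mu'_j\geq\mu$ then reproduces the right-hand side of (\ref{eqn_14}) independently of $j$. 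This is shorter and avoids both the recursion and the claim that each $G_i$ cuts the dimension by exactly one on every stratum. What the paper's longer route buys is the refinement recorded in Remark \ref{rem_equality}: on the open stratum $A_0=C^m_\alpha\setminus\V(T_0)$ the bound is achieved exactly (via the lifting/solvability argument), which is what later turns the inequality of Theorem \ref{thm_hypersurface} into an equality when $\V(P_0)\setminus\V(T_0)\neq\emptyset$; your proof, as written, establishes only the upper bound that the lemma itself asserts, though it could be supplemented by a similar lifting argument to recover the equality statement. The non-feasible case and the role of $F_\alpha$ and $F(A)$ are handled the same way in both arguments.
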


\begin{rem}
\label{rem_feasible}
An $(n+1)$-tuple $\alpha\in (\mathbb{Z}_+ )^{n+1}$ is called  \emph{feasible} if $\min_{1\leq i \leq N} \{\alpha \cdot I^i\}$ is attained by at least two different $i$'s. Otherwise, it is called \emph{non-feasible}. According to Lemma \ref{lem_second}, $C^m_\alpha=\emptyset$ if $\alpha$ is non-feasible.
\end{rem}

\begin{proof}
Consider the affine space
\begin{equation*}
\Spec\ \mathbb{C}[x_1^{(\alpha_1)}, x_1^{(\alpha_1+1)}, \ldots, x_1^{(m)}, \ldots , x_{n+1}^{(\alpha_{n+1})}, x_{n+1}^{(\alpha_{n+1}+1)}, \ldots, x_{n+1}^{(m)}]=\mathbb{A}^{m(n+1)-\sum_{i=1}^{n+1} (\alpha_i-1)}.
\end{equation*}
 Clearly $x_1^{({\alpha_1})},x_2^{({\alpha_2})},\ldots,x_{n+1}^{({\alpha_{n+1}})}$ are regular functions on this affine space. We denote by $U_m$ the open subset of
 \begin{equation*}
 \mathbb{A}^{m(n+1)-\sum_{i=1}^{n+1} (\alpha_i-1)}
 \end{equation*}
 where $x_1^{({\alpha_1})},x_2^{({\alpha_2})},\ldots,x_{n+1}^{({\alpha_{n+1}})}$ do not vanish. Let $m_0=\max_{1\leq j\leq n+1}\{\alpha_j\}$. When $m\geq m_0$, $C^m_\alpha$ is naturally embedded in $U_m$.

Pick an arc $\gamma$ in $X_\infty\cap(\cap_{1\leq i\leq n+1}\cont ^{\alpha_i}(x_i))$. Then $\gamma$ is represented by a homomorphism of $\mathbb{C}$-algebras \begin{equation}
\label{eqn_base2}
\gamma^\ast: \mathbb{C}[x_1,\ldots,x_{n+1}]\longrightarrow \mathbb{C}[\![t]\!],\ \gamma^\ast(x_i)= \sum_{j=\alpha_i}^\infty x_i^{(j)}t^j,
\end{equation}
such that $\gamma^\ast(f)=0$ and $x_i^{(\alpha_i)}\neq 0$ for each $i$. Let us write $G_s$ for the coefficient of $t^s$ in $\gamma^\ast(f)$. Then by definition (equation (\ref{eqn_P0})), we have $G_{n_0(\alpha)}=P_0(x_1^{({\alpha_1})},x_2^{({\alpha_2})},\ldots,x_{n+1}^{({\alpha_{n+1}})})$. Hence, as long as $m>n_0(\alpha)$, we have $C^m_\alpha \subset \V(P_0(x_1^{({\alpha_1})},x_2^{({\alpha_2})},\ldots,x_{n+1}^{({\alpha_{n+1}})}))$. But when $\min_{1\leq i \leq N} \{\alpha \cdot I^i\}$ is attained by a unique $i$, $P_0$ is a monomial. Thus, we have $\V(P_0(x_1^{({\alpha_1})},x_2^{({\alpha_2})},\ldots,x_{n+1}^{({\alpha_{n+1}})})) =\emptyset$ in $U_m$, which implies that $C^m_\alpha=\emptyset$. This shows that $C^m_\alpha=\emptyset$ if $\alpha$ is non-feasible. In what follows, we assume that $\alpha$ is feasible and $(a_{I_i})_i\in F_\alpha\cap F(A)$, and show that in such a case, $C_\alpha^m$ is a finite union of locally closed subsets of $U_m$, all of them having dimension less than or equal to the right-hand side of (\ref{eqn_14}).

For each $m\geq m_0$, we define $A_0:=C^m_\alpha \backslash \V (T_0)$.  Since $(a_{I_i})_i\in F_\alpha$, we can find $j$ such that
\begin{equation*}
\frac{\partial P_0}{\partial x_j^{(\alpha_j)}}(x_1^{({\alpha_1})},x_2^{({\alpha_2})},\ldots,x_{n+1}^{({\alpha_{n+1}})})\neq 0.
\end{equation*}
Then for each $s\geq 1$, consider the highest superscript of $x_j$ in the coefficient of $t^{n_0(\alpha)+s}$ and we get
\begin{equation}
\label{eqn_11}
G_{n_0(\alpha)+s} = \frac{\partial P_0}{\partial x_j^{(\alpha_j)}}(x_1^{({\alpha_1})},x_2^{({\alpha_2})},\ldots,x_{n+1}^{({\alpha_{n+1}})})\cdot x_j^{(\alpha_j+s)} + R_s,
\end{equation}
where $R_s$ is a polynomial in $\{x_i^{(t_i)}|\alpha_i\leq t_i\leq \alpha_i+s\textnormal{ for all } i\neq j\textnormal{, }\alpha_j\leq t_j<\alpha_j+s\}$. For each $s\geq 1$, if we consider the highest superscript among all $x_i$, for $1\leq i\leq n+1$, in the coefficient of $t^{n_0(\alpha)'+s}$, we get
\begin{equation}
\label{eqn_12}
G_{n_0(\alpha) '+s} = T_0(x_1^{({\alpha_1})},x_2^{({\alpha_2})},\ldots,x_{n+1}^{({\alpha_{n+1}})})\cdot x_{j_0}^{(\alpha_{j_0}+s)} + \tilde{R}_s,
\end{equation}
where $\tilde{R}_s$ is a polynomial in $\{x_i^{(t_i)}|\alpha_i\leq t_i\leq \alpha_{j_0}+s\textnormal{ for all } i\neq j_0\textnormal{, }\alpha_{j_0}\leq t_{j_0}<\alpha_{j_0}+s\}$.

Note that
\begin{equation*}
G_{n_0(\alpha)'+m-\alpha_{j_0}}=T_0(x_1^{({\alpha_1})},x_2^{({\alpha_2})},\ldots,x_{n+1}^{(\alpha_{n+1})})\cdot x_{j_0}^{(m)}+\tilde{R}_{m-n_0(\alpha)'}.
\end{equation*}
Every variable in the above expression of $G_{n_0(\alpha)'+m-\alpha_{j_0}}$ has a superscript $\leq m$. The same holds for $G_k$, with $n_0(\alpha)\leq k< n_0(\alpha)'+m-\alpha_{j_0}$. Hence each $\V(G_k)$, with $n_0(\alpha)\leq k\leq n_0(\alpha)'+m-\alpha_{j_0}$, can be considered as a closed subset of $U_m$.

We claim that if $m>n_0(\alpha)'$, then
\begin{equation*}
A_0 = \V(G_{n_0(\alpha)}, G_{n_0(\alpha)+1},\ldots, G_{n_0(\alpha)'+m-\alpha_{j_0}}) \backslash \V(T_0)\subset U_m.
\end{equation*}
In fact, if we embed $A_0$ naturally in $A_\infty:=\Spec\ \mathbb{C} [x_i^{(s_i)}|1\leq i\leq n+1,s_i\geq \alpha_i]$, and consider each $\V(G_k)$ as a subset of $A_\infty$, then we have
\begin{equation*}
A_0= U_m\cap (\cap_{s\geq 0}\V(G_{n_0(\alpha)+s}))\backslash \mathbb{V}(T_0).
\end{equation*}
Hence, $A_0$ is contained in $\V(P_0, G_{n_0(\alpha)+1},\ldots, G_{n_0(\alpha)'+m-\alpha_{j_0}}) \backslash \V(T_0)\subset U_m$.

On the other hand, for each $s\geq 1$, if we fix
\begin{equation*}
\{x_i^{(t_i)}|\alpha_i\leq t_i\leq \alpha_i+s\textnormal{ for all } i\neq j\textnormal{, }\alpha_j\leq t_j<\alpha_j+s\}
\end{equation*}
 such that $\frac{\partial P_0}{\partial x_j^{(\alpha_j)}}(x_1^{({\alpha_1})},x_2^{({\alpha_2})},\ldots,x_{n+1}^{({\alpha_{n+1}})})\neq 0$, then the equation $G_{n_0(\alpha)+s}=0$ has a unique solution for $x_j^{(\alpha_j+s)}$. Similarly, if we fix
 \begin{equation*}
 \{x_i^{(t_i)}|\alpha_i\leq t_i\leq \alpha_{j_0}+s\textnormal{ for all } i\neq j_0\textnormal{, }\alpha_{j_0}\leq t_{j_0}<\alpha_{j_0}+s\}
 \end{equation*}
 such that $T_0(x_1^{({\alpha_1})},x_2^{({\alpha_2})},\ldots,x_{n+1}^{({\alpha_{n+1}})}) \neq 0$, then the equation $G_{n_0(\alpha)'+s}=0$ has a unique solution for $x_{j_0}^{(\alpha_{j_0}+s)}$. The existence of solutions for $G_{n_0(\alpha)+s}=0$ and $G_{n_0(\alpha)'+s}=0$, for each $s\geq 1$, shows that every element in $U_m\cap\V(P_0, G_{n_0(\alpha)+1},\ldots, G_{n_0(\alpha)'+m-\alpha_{j_0}}) \backslash \V(T_0)$ can be lifted to an element in $X_\infty$, and hence contained in $A_0$. Moreover, we see that each equation $G_{n_0(\alpha)+s}=0$ or $G_{n_0(\alpha)'+s}=0$ cuts down the dimension exactly by $1$. This  shows that the codimension of $A_0$ in $U_m$ is exactly the number of equations unless $A_0=\emptyset$. We conclude that if $A_0\neq \emptyset$, then
\begin{eqnarray*}
 \dim(A_0)&=&
 m(n+1)-\sum_{i=1}^{n+1} (\alpha_i-1)+n_0(\alpha)-(n_0(\alpha)'-\alpha_{j_0})-1-m\\
&=& mn-\sum_{i=1}^{n+1} (\alpha_i-1)-1+n_0(\alpha)-\mu_{j_0}.
\end{eqnarray*}

Now suppose that $\psi_m(\gamma)\in C^m_\alpha\cap \V(T_0)$. Then the first term on the right-hand side of equation (\ref{eqn_12}) vanishes. If we delete the first term and rearrange the equation to get a new highest-superscript term, we claim that for $s$ sufficiently large (independent of $m$), the equation (\ref{eqn_12}) becomes
\begin{equation*}
G_{n_0(\alpha)'+s}=T_1\cdot x_{j_1}^{(s-\mu_1)}+ \textnormal{Remaining Terms without }x_{j_1}^{(s-\mu_1)}
\end{equation*}
for some polynomial $T_1$ and some number $\mu_1$, with $s-\mu_1$ being the highest superscript.

To prove this, let us consider the monomials in the expression of $G_{n(\alpha)'+s}$ after deleting $T_0\cdot x_{j_0}^{(\alpha_{j_0}+s)}$. According to the proof of Lemma \ref{lem_forth}, they are of the form $\prod_{j=1}^{n+1}\prod_{k=1}^{I^i_j} x_i^{(\beta^j_k)}$ for some $i$, with $\beta^j_k\geq \alpha_j$ and $\sum_{j,k}\beta^j_k=n_0(\alpha)'+s$. Hence, the number
\begin{equation*}
\max_{s\geq 1}\{\max_{j,k}\{\beta^j_k\}-s\}
\end{equation*}
is bounded above. In fact, it is bounded above by $\alpha_{j_0}$. Suppose that the maximum is attained by some $s_0\geq 1$ and $(\bar\beta^j_k)_{j,k}$, with $\bar\beta^{j_1}_1=\max_{j,k}\{\bar\beta^j_k\}$. In other words, the highest superscript appears in the monomial
\begin{equation*}
x_1^{(\bar\beta^1_1)}\ldots x_1^{\big(\bar\beta^1_{I^i_1}\big)}\ldots \widehat{x_{j_1}^{(\bar\beta^{j_1}_1)} }\ldots x_{n+1}^{\big(\bar\beta^{n+1}_{I^i_{n+1}}\big)}\cdot x_{j_1}^{(\bar\beta^{j_1}_1)}.
\end{equation*}
On the other hand, for each $s\geq s_0$, $G_{n_0(\alpha)'+s}$ contains the monomial
\begin{equation*}
x_1^{(\bar\beta^1_1)}\ldots x_1^{\big(\bar\beta^1_{I^i_1}\big)}\ldots \widehat{x_{j_1}^{(\bar\beta^{j_1}_1)} }\ldots x_{n+1}^{\big(\bar\beta^{n+1}_{I^i_{n+1}}\big)}\cdot x_{j_1}^{(\bar\beta^{j_1}_1+s-s_0)}.
\end{equation*}
This shows that $\max_{s\geq 1}\{\max_{j,k}\{\beta^j_k\}-s\}$ is attained by all $s\geq s_0$ and the same $j_1$. Hence the highest superscript in $G_{n_0(\alpha)'+s}$ is equal to $s-\mu_1$ for some fixed $\mu_1$ and for $s$ sufficiently large. We also see that if $M\cdot x_{j_1}^{(s-\mu_1)}$ is a monomial in $G_{n_0(\alpha)'+s}$ that contains the highest superscript, then $M\cdot x_{j_1}^{(s'-\mu_1)}$ is a monomial in $G_{n_0(\alpha)'+s'}$ that contains the highest superscript for each $s'>s$. Moreover, for every such monomial, the weight of $M$ is equal to $n_0(\alpha)'+\mu_1$. There are only finitely many monomials with a fixed weight. Hence, we get
\begin{equation*}
G_{n_0(\alpha)'+s}=T_1\cdot x_{j_1}^{(s-\mu_1)}+ \textnormal{Remaining Terms without }x_{j_1}^{(s-\mu_1)}
\end{equation*}
for $s$ sufficiently large and for a fixed polynomial $T_1$. Clearly, we have $s-\mu_1\leq \alpha_{j_0}+s$, or equivalently, $\mu_1\geq -\alpha_{j_0}$.

Let $m_1$ be $\geq $ the largest superscript appearing in $T_1$ and $m_1\geq m_0$. Then for each $m\geq m_1$, we define $A_1:=C^m_\alpha\cap \V(T_0)\backslash \V(T_1)\subset U_m$. With the same analysis as above we can show that
\begin{equation*}
A_1= \V(T_0,G_{n_0(\alpha)},G_{n_0(\alpha)+1},\ldots,G_{n_0(\alpha)' +\mu_1+m}) \backslash \V(T_1)
\end{equation*}
and that each $G_i$ cuts down dimension exactly by $1$. Hence either $A_1=\emptyset$ or
\begin{eqnarray*}
\dim(A_1)&\leq&
m(n+1)-\sum_{i=1}^{n+1} (\alpha_i-1)+n_0(\alpha)-1-n_0(\alpha)'-\mu_1-m\\
&\leq& mn-\sum_{i=1}^{n+1} (\alpha_i-1)-1+n_0(\alpha)-(n_0(\alpha)'-\alpha_{j_0})\\
&=& mn-\sum_{i=1}^{n+1} (\alpha_i-1)-1+n_0(\alpha)-\mu_{j_0}.
\end{eqnarray*}

Inductively, suppose we have $A_{k}=C^m_\alpha\cap (\cap_{l\leq k-1} \V(T_l))\backslash \V(T_k)$ for each $m\geq m_k$, for some number $m_k\geq \max_{0\leq i\leq k-1}\{m_i\}$, and when $\psi_m(\gamma)\in A_k$ we have
\begin{equation}
\label{eqn_17}
G_{n_0(\alpha)'+s}=T_k\cdot x_{j_k}^{(s-\mu_k)}+ \textnormal{Remaining Terms},
\end{equation}
for $s$ sufficiently large and some number $\mu_k\geq -\alpha_{j_0}$, where $s-\mu_k$ is the highest superscript.

Now suppose that $\psi_m(\gamma)\in C^m_\alpha\cap (\cap_{l\leq k} \V(T_l))$, then the first term of the right-hand side of equation (\ref{eqn_17}) vanishes. If we delete the first term and rearrange the equation to get a new highest-superscript term, with the same proof as above we can show that
\begin{equation*}
G_{n_0(\alpha)'+s}=T_{k+1}\cdot x_{j_{k+1}}^{(s-\mu_{k+1})}+ \textnormal{Remaining Terms},
\end{equation*}
for $s$ sufficiently large (independent of $m$). Clearly, we have $\mu_{k+1}\geq \mu_k\geq -\alpha_{j_0}$.

Let $m_{k+1}$ be $\geq $ the highest superscript in $T_{k+1}$ and $m_{k+1}\geq m_k$. For each $m\geq m_{k+1}$, we define $A_{k+1}:=C^m_\alpha\cap (\cap_{l\leq k} \V(T_l))\backslash \V(T_{k+1})$. With the same analysis as in the case when $k=0$ we can show that
\begin{equation*}
A_{k+1}= \V(T_0,\ldots,T_k,G_{n_0(\alpha)},G_{n_0(\alpha)+1},\ldots,G_{n_{k+1}'+m}) \backslash \V(T_{k+1})
\end{equation*}
and that each $G_i$ cuts down dimension exactly by $1$. Hence either $A_{k+1}=\emptyset$ or
\begin{eqnarray*}
\dim(A_{k+1})&\leq&
m(n+1)-\sum_{i=1}^{n+1} (\alpha_i-1)+n_0(\alpha)-1-n_0(\alpha)'-\mu_{k+1}-m\\
&\leq& mn-\sum_{i=1}^{n+1} (\alpha_i-1)-1+n_0(\alpha)-\mu_{j_0}.
\end{eqnarray*}

We claim that there can be only finitely many such steps. First note that the highest superscript decreases, or equivalently, the number $\mu_k$ increases, by at least $1$ as $k$ increases by $n+1$ because we must have used the same subscript $j_k$ during $n+2$ steps. Second, the decrease in highest superscript must eventually stop because $G_{n_0(\alpha)+s}$ contains the term $\frac{\partial P_0}{\partial x_j^{(\alpha_j)}}(x_1^{({\alpha_1})},x_2^{({\alpha_2})},\ldots,x_{n+1}^{({\alpha_{n+1}})})\cdot x_j^{(\alpha_j+s)}$ for some $j$, with $\frac{\partial P_0}{\partial x_j^{(\alpha_j)}}(x_1^{({\alpha_1})},x_2^{({\alpha_2})},\ldots,x_{n+1}^{({\alpha_{n+1}})}) \neq 0$, and for every $s\geq 1$. Hence, for each $m$ large enough, we can decompose $C^m_\alpha$ into a finite union $\cup_{i\geq0} A_i$, where each $A_i$ either is empty or has dimension less than or equal to the number in the lemma. This completes the proof.
\end{proof}

\begin{rem}
\label{rem_equality}
From the proof of Lemma \ref{lem_second} we see that for a fixed feasible $\alpha$, coefficients $(a_{I_i})_i\in F_\alpha\cap F(A)$ and $m$ large enough, if $A_0\neq \emptyset$, then we have
\begin{equation}
\label{eqn_18}
\dim (C_\alpha^m)= mn-\sum_{j=1}^{n+1} (\alpha_j-1)-1+ \min_{1\leq i\leq N} \{I^i\cdot \alpha \}-\underset{1\leq j\leq n+1\textnormal{ with }  I_j^i>0}{\min_{1\leq i\leq N}} \{I^i\cdot \alpha-\alpha_j \}.
\end{equation}
We also see that $G_{n_0(\alpha)+s}=0$ for each $s\geq 1$ has a solution as long as $T_0\neq 0$. Therefore, $A_0\neq \emptyset$ for $m$ large enough if and only if $\V(P_0)\backslash \V(T_0)\neq \emptyset$ in the torus 
\begin{equation*}
(\mathbb{C}^\ast)^{n+1}= \Spec\ \mathbb{C}[x_1^{({\alpha_1})},x_2^{({\alpha_2})},\ldots, x_{n+1}^{({\alpha_{n+1}})}]_{(x_1^{({\alpha_1})},x_2^{({\alpha_2})},\ldots, x_{n+1}^{({\alpha_{n+1}})})}.
\end{equation*}
Note that the condition $\V(P_0)\backslash \V(T_0)\neq \emptyset$ is independent of $m$.
\end{rem}

We also need the following result:
\begin{prop}
\label{prop_finite_components}
(\cite[Proposition 3.5]{dFEI08})
If $X$ is a variety over $k$, then the number of irreducible components of a cylinder on $X_\infty$ is finite.
\end{prop}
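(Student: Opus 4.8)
The plan is to bootstrap from the smooth case using a resolution of singularities, dealing with the components of the cylinder that "see" the singular locus by Noetherian induction on $\dim X$. Since $X$ is covered by finitely many affine opens $U_i$, and each $(U_i)_\infty=\pi^{-1}(U_i)$ is open in $X_\infty$ with $C\cap(U_i)_\infty$ again a cylinder whose components restrict from those of $C$, I would first reduce to the case $X$ affine. Then $C=\psi_m^{-1}(S)$ for some constructible subset $S$ of the finite-type scheme $X_m$, and $S$ has finitely many irreducible components because $X_m$ is Noetherian.

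The first step is the smooth case. If $X$ is smooth of dimension $d$ and $S\subseteq X_m$ is irreducible and closed, then by Corollary~\ref{jet_smooth} each $\pi_{p,m}^{-1}(S)\subseteq X_p$ is a Zariski-locally trivial $\A^{(p-m)d}$-bundle over $S$, hence irreducible, and the transition maps of the system $\{\pi_{p,m}^{-1}(S)\}_{p\ge m}$ are surjective. Checking irreducibility of $\psi_m^{-1}(S)=\varprojlim_p\pi_{p,m}^{-1}(S)$ on basic opens — any two of which may be taken at a common level $p$ — and using that $\psi_p\colon X_\infty\to X_p$ is surjective, one concludes $\psi_m^{-1}(S)$ is irreducible. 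Decomposing a general constructible $S$ into finitely many irreducible locally closed pieces $S_j$ and noting that $\psi_m^{-1}(S_j)$ is open in the irreducible set $\psi_m^{-1}(\overline{S_j})$, it follows that $\psi_m^{-1}(S)$ has finitely many irreducible components whenever $X$ is smooth.

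For general $X$ I would induct on $\dim X$, the case $\dim X=0$ being trivial. Choose a resolution $f\colon Y\to X$ that is an isomorphism over $X\setminus Z$ for some proper closed subvariety $Z$, so $\dim Z<\dim X$. Then $f_\infty^{-1}(C)=(\psi_m^{Y})^{-1}(f_m^{-1}(S))$ is a cylinder in $Y_\infty$, and $Y$ is smooth, so by the first step it has finitely many irreducible components $W_1,\dots,W_r$. Now split the irreducible components of $C$ into those contained in $Z_\infty$ and those that are not. A component of $C$ lying in $Z_\infty$ is a component of the cylinder $C\cap Z_\infty=(\psi_m^{Z})^{-1}(S\cap Z_m)$ inside $Z_\infty$, so the inductive hypothesis applied to $Z$ (to each of its irreducible components separately, if $Z$ is reducible) bounds the number of these. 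For a component $Z'$ not contained in $Z_\infty$: by the valuative criterion of properness applied to $f$, every arc not lying in $Z$ lifts along $f$, so $C\setminus Z_\infty\subseteq\bigcup_i f_\infty(W_i)$; since $Z'\setminus Z_\infty$ is dense in $Z'$ and irreducible, $Z'\subseteq\overline{f_\infty(W_i)}$ for some $i$, and as $f_\infty(W_i)$ is an irreducible subset of $C$ this forces $Z'$ to be the unique component of $C$ containing $f_\infty(W_i)$; hence the assignment $Z'\mapsto W_i$ is injective and there are at most $r$ such components. This closes the induction.

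The main obstacle is exactly the interplay between the non-Noetherianity of the arc space and the singular locus. In the smooth step one must argue irreducibility of $\psi_m^{-1}(S)$ directly at the level of the limit topology, since $X_\infty$ is not Noetherian and $\psi_m^{-1}(S)$ is typically not cut out by finitely many equations. More seriously, a plain Noetherian induction on $X$ alone fails, because a component of $C$ can lie in $\pi^{-1}(X_{\mathrm{sing}})$ without being contained in $(X_{\mathrm{sing}})_\infty$ (an arc through a singular point need not factor through it); passing to a resolution, where the valuative criterion lifts precisely the arcs not contained in $Z$, is what gets around this — while one should check that the lifted cylinder $f_\infty^{-1}(C)$ is genuinely a cylinder and that $f_m^{-1}(S)$ is constructible, which is immediate since $f_m$ is a morphism of finite-type $k$-schemes.
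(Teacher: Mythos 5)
Your argument is correct, and since the paper itself gives no proof of this proposition (it is quoted from \cite{dFEI08}), the comparison is with that reference: your proof is essentially the one given there, namely the smooth case via the locally trivial truncation fibrations and surjectivity of $\psi_p$, followed by a resolution $f\colon Y\to X$, the valuative criterion to lift all arcs of $C$ not contained in $Z_\infty$, and induction on dimension to handle the components inside $Z_\infty$. No gaps worth flagging.
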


Recall that if we fix an integral support $A=\{I^1,\ldots, I^N\}$, then $F(A)$ and $F_\alpha$ are subsets of $(\mathbb{C}^\ast)^N$ defined in Definition \ref{defn_integral} and Lemma \ref{lem_first} respectively. Each of them contains an open dense subset of $(\mathbb{C}^\ast)^N$. An $(n+1)$-tuple $\alpha=(\alpha_1,\ldots, \alpha_{n+1})$ is called feasible if $\min_{1\leq i\leq N}\{\alpha\cdot I^i\}$ is attained by at least two different $i$'s. For each feasible $\alpha$, we define polynomials $P_0$ by equation (\ref{eqn_P0}) and $T_0$ by equation (\ref{eqn_16}). Using the above lemmas we obtain the following theorem:
\begin{thm}
\label{thm_hypersurface}
Let $A=\{I^1,\ldots, I^N\}$ be an integral support.
If
\begin{equation*}
(a_{I^i})_{1\leq i\leq N}\in \bigcap_{\textnormal{feasible }\alpha} F_\alpha\cap F(A)
\end{equation*}
and $X$ is the hypersurface in $\mathbb{A}^{n+1}$ defined by $f=\sum_{i=1}^N a_{I^i} x^{I^i}$, then $X$ is an integral hypersurface containing the origin $0$ and the invariant $\lambda$ (defined in Definition \ref{defn_lambda}) for the origin satisfies
\begin{equation}
\label{eqn_hypersurface}
\lambda(0) \geq \min \{\sum_{j=1}^{n+1} (\alpha_j-1)+1-\underset{1\leq i\leq N}{\min} \{I^i\cdot \alpha \}+\underset{1\leq j\leq n+1\textnormal{ with }  I_j^i>0}{\min_{1\leq i\leq N}} \{I^i\cdot \alpha-\alpha_j \} \},
\end{equation}
where the first minimum is taken over all feasible $(n+1)$-tuples $\alpha$.

Moreover, assume the first minimum is attained at some feasible $\alpha$. If for this $\alpha$, we have $\V(P_0)\backslash \V(T_0)\neq \emptyset$ in the torus 
\begin{equation*}
(\mathbb{C}^\ast)^{n+1}= \Spec\ \mathbb{C}[x_1^{({\alpha_1})},x_2^{({\alpha_2})},\ldots, x_{n+1}^{({\alpha_{n+1}})}]_{(x_1^{({\alpha_1})},x_2^{({\alpha_2})},\ldots, x_{n+1}^{({\alpha_{n+1}})})},
\end{equation*}
then the inequality (\ref{eqn_hypersurface}) is in fact an equality.
\end{thm}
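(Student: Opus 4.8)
The plan is to turn the estimates already established in Lemma~\ref{lem_second} and Remark~\ref{rem_equality} into a statement about $\dim(C^m)$ itself, and to read off $\lambda(0)$ from it. First, since $A$ is integral and $(a_{I^i})_{1\le i\le N}\in F(A)$, Corollary~\ref{cor_integral} gives that $X$ is an integral hypersurface through the origin. By Proposition~\ref{fundamental_prop}, $\mld(0;X)=\lambda(0)+n$, and by Lemma~\ref{lem_lambda} together with the definition of $\lambda_m$ there is an $m_0$ with $\lambda(0)=\lambda_m(0)=mn-\dim(C^m)$ for all $m\ge m_0$. Write $e(\alpha)$ for the quantity inside the minimum in \eqref{eqn_hypersurface}, so that Lemma~\ref{lem_second} reads: $C^m_\alpha=\emptyset$ for non-feasible $\alpha$, and $\dim(C^m_\alpha)\le mn-e(\alpha)$ for $m$ large and $\alpha$ feasible. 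Everything then reduces to estimating $\dim(C^m)$.

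For the inequality \eqref{eqn_hypersurface} I would proceed as follows. The set $\pi^{-1}(0)=\psi_0^{-1}(\{0\})$ is a cylinder, so by Proposition~\ref{prop_finite_components} it has finitely many irreducible components. Put $Z_j:=X\cap\V(x_j)$; since $f$ is not divisible by $x_j$, each $Z_j$ is a proper closed subset of $X$ of dimension $\le n-1$, and $(Z_j)_\infty=((Z_j)_{\mathrm{red}})_\infty$, so a component $C\subseteq(Z_j)_\infty$ is thin and has $\dim\psi_m(C)\le(m+1)(n-1)$ by Lemma~\ref{lem_4.3}. Any other component $C$ has a generic arc $\gamma_C$ with $\alpha_j:=\ord_{\gamma_C}(x_j)<\infty$ for every $j$, and $C\cap\bigcap_j\cont^{\alpha_j}(x_j)$ is then a dense open subset of $C$; since $\psi_m\big(\pi^{-1}(0)\cap\bigcap_j\cont^{\alpha_j}(x_j)\big)=C^m_\alpha$ once $\alpha_j\le m$ for all $j$ (by the definition of $C^m_\alpha$ and $\cont^{\alpha_j}(x_j)=\psi_m^{-1}(\cont^{\alpha_j}(x_j)_m)$), continuity of $\psi_m$ yields $\psi_m(C)\subseteq\overline{C^m_\alpha}$ for $m\ge\max_j\alpha_j$, with $\alpha$ feasible by Lemma~\ref{lem_second}. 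Hence, for $m$ beyond $m_0$ and beyond all the finitely many orders occurring this way,
\begin{equation*}
\dim(C^m)\ \le\ \max\{\,(m+1)(n-1),\ mn-\min_{\alpha\ \text{feasible}}e(\alpha)\,\},
\end{equation*}
and since $\min_{\alpha}e(\alpha)$ is finite — there is a fat component, for otherwise $\lambda_m(0)\ge m-n+1\to\infty$, contradicting Lemma~\ref{lem_lambda} — the right-hand side equals $mn-\min_{\alpha}e(\alpha)$ for $m$ large. Therefore $\lambda(0)=mn-\dim(C^m)\ge\min_{\alpha\ \text{feasible}}e(\alpha)$, which is \eqref{eqn_hypersurface}.

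For the ``moreover'' part, let $\alpha^{\circ}$ be a feasible tuple with $e(\alpha^{\circ})=\min_{\alpha}e(\alpha)$, and assume $\V(P_0)\setminus\V(T_0)\ne\emptyset$ in the torus for $\alpha=\alpha^{\circ}$. As the coefficients lie in $F_{\alpha^{\circ}}\cap F(A)$, Remark~\ref{rem_equality} applies: the hypothesis $\V(P_0)\setminus\V(T_0)\ne\emptyset$ is exactly what guarantees $A_0\ne\emptyset$ for $m$ large, whence $\dim(C^m_{\alpha^{\circ}})=mn-e(\alpha^{\circ})$ by \eqref{eqn_18} for all $m\gg0$. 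Since $C^m_{\alpha^{\circ}}\subseteq C^m$ we get $\dim(C^m)\ge mn-e(\alpha^{\circ})$, hence $\lambda(0)=mn-\dim(C^m)\le e(\alpha^{\circ})=\min_{\alpha}e(\alpha)$; combined with the previous paragraph this forces $\lambda(0)=\min_{\alpha\ \text{feasible}}e(\alpha)$, i.e.\ equality in \eqref{eqn_hypersurface}.

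I expect the genuine work to lie in the middle step — passing from the individual pieces $C^m_\alpha$ to $C^m$ — rather than in the equality statement itself. The crux there is the use of Proposition~\ref{prop_finite_components} to reduce to finitely many relevant $\alpha$ and to confine the ``boundary'' (the components contained in some coordinate hyperplane section $Z_j$, which exist precisely because $f$ has no constant term) to dimension $\le(m+1)(n-1)$; once Lemma~\ref{lem_second} and Remark~\ref{rem_equality} are in hand, the ``moreover'' equality is a one-line consequence.
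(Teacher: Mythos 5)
Your proposal is correct and follows essentially the same route as the paper: Corollary \ref{cor_integral} for integrality, Proposition \ref{prop_finite_components} together with Lemma \ref{lem_4.3} to dispose of the components lying in the coordinate sections, Lemma \ref{lem_second} applied to the order vector attached to each remaining component of $\pi^{-1}(0)$ for the lower bound, and Remark \ref{rem_equality} combined with $C^m_{\alpha}\subseteq C^m$ for the equality case. The only (harmless) deviation is organizational: the paper uses Lemma \ref{lem_4.3}(2) to single out one fat component $Z_1$ with $\dim(C^m)=\dim(C^m_{\alpha'})$ for all large $m$, whereas you bound every non-coordinate component by its own $C^m_{\alpha_C}$ and take $m$ beyond the finitely many thresholds coming from Lemma \ref{lem_second}, which works because the cylinder has only finitely many irreducible components.
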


\begin{proof}
Let $f$ be fixed with $(a_{I^i})_{1\leq i\leq N}\in \cap_\alpha F_\alpha\cap F(A)$. By Corollary \ref{cor_integral}, $X$ is an integral hypersurface containing the origin $0$. According to Proposition \ref{prop_finite_components}, $\pi^{-1}(0)$ contains only finitely many irreducible components $C_1,\ldots,C_p,\ Z_1,\ldots,Z_q$, where each $C_j$ is thin and each $Z_i$ is fat. We have $\dim (C^m)=mn-\lambda(0)$ when $m$ is large enough. For each thin irreducible component $C_j$ of $\pi^{-1}(0)$, however, by Lemma \ref{lem_4.3} we see that $\dim(\psi_m(C_j))\leq (m+1)(n-1)$. Thus, for $m$ large enough, we have
\begin{equation*}
\dim (C^m)=\underset{1\leq i\leq q}{\max} \dim(\psi_m(Z_i)).
\end{equation*}

By Lemma \ref{lem_4.3}, the fibers of $\psi_{m+1}(\pi^{-1} (x))\rightarrow \psi_m(\pi^{-1} (x))$ have dimension $\leq n$. Since $\dim (C^m)=mn-\lambda(0)$ for every $m$ large enough, we have
\begin{equation*}
\dim (C^{m+1})= \dim(C^m)+n\textnormal{ for }m\gg 0.
\end{equation*}
 This also implies that there is some $i$ such that
\begin{equation*}
\dim (C^m)=\dim(\psi_m(Z_i)) \textnormal{ for all }m\gg 0.
\end{equation*}
In fact, pick a positive integer $M$ such that $\dim (C^{m+1})= \dim(C^m)+n$ for all $m\geq M$. If $\dim(\psi_m(Z_i))< \dim (C^m)$ for some $m\geq M$, then
\begin{eqnarray*}
\dim(\psi_{m+k}(Z_i))&\leq& \dim(\psi_m(Z_i))+nk\\
&<&\dim (C^m)+nk\\
&=& \dim (C^{m+k}),
\end{eqnarray*}
for every $k\geq 0$. It follows that if there exists $m_i\geq M$ for each $i$ such that $\dim(\psi_{m_i}(Z_i))< \dim (C^{m_i})$, we have $\dim(C^m)> \max_{1\leq i\leq q}\dim(\psi_m(Z_i))$ when $m> \max_{1\leq i\leq q} \{m_i\}$, a contradiction. Therefore, by relabeling we may assume that
\begin{equation*}
\dim (C^m)=\dim(\psi_m(Z_1))\textnormal{ for all }m\geq M.
\end{equation*}

Since $X=\V(f)$ is irreducible and is not a hyperplane, we have $\V(x_i,f)\subsetneq \V(f)$ for each $i$. This implies that the fat component $Z_1$ is not contained in $\cont^\infty(x_i)$ for each $i$, or equivalently, $Z_1$ does not have infinite order along any $x_i$. Choose an $(n+1)$-tuple $\alpha'=(\alpha_1',\ldots, \alpha_{n+1}')\in (\mathbb{Z}_{+})^{n+1}$ with
\begin{equation*}
\alpha_i'=\min\{\ord_\gamma(x_i)|\gamma\in Z_1\}\textnormal{, } 1\leq i\leq n+1.
\end{equation*}
Then if $m\geq \max\{M,\alpha_1',\ldots, \alpha_{n+1}'\}$, $C^m_{\alpha'}$ contains a dense open subset of $\psi_m(Z_1)$, hence $\dim(C^m)=\dim(C^m_{\alpha'})$. By applying Lemma \ref{lem_second}, we get for $m\gg 0$,
\begin{equation*}
\dim(C^m)\leq mn-\sum_{j=1}^{n+1} (\alpha_j'-1)-1+\underset{1\leq i\leq N}{\min} \{I^i\cdot \alpha' \}-\underset{1\leq j\leq n+1\textnormal{ with }  I_j^i>0}{\min_{1\leq i\leq N}}\{I^i\cdot \alpha'-\alpha_j' \}.
\end{equation*}
Therefore, for $m\gg 0$ we have
\begin{eqnarray*}
\lambda(0)&=& mn-\dim(C^m)\\
&\geq& \sum_{j=1}^{n+1} (\alpha_j'-1)+1-\underset{1\leq i\leq N}{\min} \{I^i\cdot \alpha' \}+\underset{1\leq j\leq n+1\textnormal{ with }  I_j^i>0}{\min_{1\leq i\leq N}}\{I^i\cdot \alpha'-\alpha_j' \}\\
&\geq& \underset{\textnormal{feasible }\alpha}{\min} \big\{\sum_{j=1}^{n+1} (\alpha_j-1)+1-\underset{1\leq i\leq N}{\min} \{I^i\cdot \alpha \}+\underset{1\leq j\leq n+1\textnormal{ with }  I_j^i>0}{\min_{1\leq i\leq N}}\{I^i\cdot \alpha-\alpha_j \} \big\}.
\end{eqnarray*}

Now suppose the first minimum in (\ref{eqn_hypersurface}) is attained at some feasible $\alpha$ and that for this $\alpha$ we have
\begin{equation*}
\dim (C_\alpha^m)= mn-\sum_{j=1}^{n+1} (\alpha_j-1)-1+ \min_{1\leq i\leq N} \{I^i\cdot \alpha \}-\underset{1\leq j\leq n+1\textnormal{ with }  I_j^i>0}{\min_{1\leq i\leq N}} \{I^i\cdot \alpha-\alpha_j \}.
\end{equation*}
Since $C^m_\alpha\subset C^m$, we have $\dim(C_\alpha^m)\leq \dim(C^m)$. On the other hand, we have $\dim(C^m_\alpha)\geq \dim(C^m_{\alpha'})=\dim(C^m)$ by the choice of $\alpha$. This shows that
\begin{eqnarray*}
\lambda(0)&=&mn-\dim(C^m_\alpha)\\
&=&\sum_{j=1}^{n+1} (\alpha_j-1)+1-\underset{1\leq i\leq N}{\min} \{I^i\cdot \alpha \}+\underset{1\leq j\leq n+1\textnormal{ with }  I_j^i>0}{\min_{1\leq i\leq N}}\{I^i\cdot \alpha-\alpha_j \}.
\end{eqnarray*}

In other words, we obtain an equality if $\dim(C^m_\alpha)$ attains the upper bound in the statement of Lemma \ref{lem_second} for some feasible $\alpha$ where the first minimum in (\ref{eqn_hypersurface}) is attained. According to Remark \ref{rem_equality}, this happens when $\V(P_0)\backslash \V(T_0)\neq \emptyset$ for such an $\alpha$.
\end{proof}

Combining Lemma \ref{lem_first}, Theorem \ref{thm_hypersurface} and Proposition \ref{fundamental_prop}, we get the following corollary:

\begin{cor}
\label{cor_hypersurface}
Let $A=\{I^1,\ldots,I^N\}\subset (\mathbb{Z}_{\geq 0})^{n+1}$ be a fixed integral subset (see Definition \ref{defn_integral}). If $X$ is a hypersurface in $\mathbb{A}^{n+1}$ defined by a very general polynomial with support $A$, then $X$ is an integral hypersurface containing the origin $0$ and we have
\begin{equation}
\label{eqn_hypersurface_mld}
\mld(0;X)\geq \min \{\sum_{j=1}^{n+1} (\alpha_j-1)+1-\underset{1\leq i\leq N}{\min} \{I^i\cdot \alpha \}+\underset{1\leq j\leq n+1\textnormal{ with }  I_j^i>0}{\min_{1\leq i\leq N}} \{I^i\cdot \alpha-\alpha_j \} \}+n,
\end{equation}
where the first minimum is taken over all $(n+1)$-tuples $\alpha$ such that $\underset{1\leq i\leq N}{\min} \{I^i\cdot \alpha \}$ is attained by at least two different $i$'s.

Moreover, assume the first minimum is attained at some feasible $\alpha$. If for this $\alpha$, the polynomials $P_0$ (defined in equation (\ref{eqn_P0})) and $T_0$ (defined in equation (\ref{eqn_16})) satisfy $\V(P_0)\backslash \V(T_0)\neq \emptyset$ in the torus
\begin{equation*}
(\mathbb{C}^\ast)^{n+1}= \Spec\ \mathbb{C}[x_1^{({\alpha_1})},x_2^{({\alpha_2})},\ldots, x_{n+1}^{({\alpha_{n+1}})}]_{(x_1^{({\alpha_1})},x_2^{({\alpha_2})},\ldots, x_{n+1}^{({\alpha_{n+1}})})},
\end{equation*}
then the inequality (\ref{eqn_hypersurface_mld}) is in fact an equality.
\end{cor}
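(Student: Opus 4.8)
The plan is to obtain the corollary as a formal consequence of three results already available: Lemma~\ref{lem_first}, Theorem~\ref{thm_hypersurface}, and Proposition~\ref{fundamental_prop}. Theorem~\ref{thm_hypersurface} already gives both the inequality (for $\lambda(0)$) and the sharpness criterion, but only for coefficients lying in $\bigcap_{\textnormal{feasible }\alpha}F_\alpha\cap F(A)$; so the real content of the corollary is that this set is ``large enough'' to be hit by a very general polynomial, after which one only has to translate the statement from $\lambda(0)$ to $\mld(0;X)$.

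First I would observe that the index set $(\mathbb{Z}_+)^{n+1}$ is countable, so there are at most countably many feasible tuples $\alpha$. For each feasible $\alpha$, Lemma~\ref{lem_first} exhibits a nonempty Zariski-open subset of $(\mathbb{C}^\ast)^N$ inside $F_\alpha$; since $(\mathbb{C}^\ast)^N$ is irreducible, that open subset is dense, hence its complement is contained in a proper closed subvariety. Likewise, by Theorem~\ref{thm_integral} (as packaged in Definition~\ref{defn_integral}), $F(A)$ contains a nonempty dense open subset of $(\mathbb{C}^\ast)^N$. Therefore the ``bad locus'', namely $\bigl((\mathbb{C}^\ast)^N\setminus F(A)\bigr)\cup\bigcup_{\textnormal{feasible }\alpha}\bigl((\mathbb{C}^\ast)^N\setminus F_\alpha\bigr)$, is contained in a countable union of proper closed subvarieties of $(\mathbb{C}^\ast)^N$. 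By the very definition of ``very general'', a very general choice of coefficients $(a_{I^i})_{1\leq i\leq N}$ avoids this bad locus, i.e. lies in $\bigcap_{\textnormal{feasible }\alpha}F_\alpha\cap F(A)$.

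With this in hand I would apply Theorem~\ref{thm_hypersurface} directly: for such coefficients, $X$ is an integral hypersurface containing the origin and $\lambda(0)$ satisfies inequality (\ref{eqn_hypersurface}). Since $\dim(X)=n$, Proposition~\ref{fundamental_prop} gives $\lambda(0)=\mld(0;X)-n$; substituting into (\ref{eqn_hypersurface}) yields exactly (\ref{eqn_hypersurface_mld}). For the ``moreover'' part, I would transport the second assertion of Theorem~\ref{thm_hypersurface} across the same identity $\mld(0;X)=\lambda(0)+n$: if the first minimum in (\ref{eqn_hypersurface}) is attained at a feasible $\alpha$ for which $\mathbb{V}(P_0)\setminus\mathbb{V}(T_0)\neq\emptyset$ in the torus, then (\ref{eqn_hypersurface}) is an equality, and hence so is (\ref{eqn_hypersurface_mld}).

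The only step that needs genuine care — and the closest thing to an obstacle — is the passage in the second paragraph from ``each $F_\alpha$ contains a nonempty open set'' to ``a very general polynomial has coefficients in all of them at once''. This relies on two facts that must be stated explicitly: that the family of conditions is countable (because it is indexed by $(\mathbb{Z}_+)^{n+1}$), and that each $F_\alpha$, as well as $F(A)$, contains a \emph{dense} open subset, which in turn uses the irreducibility of $(\mathbb{C}^\ast)^N$. Everything after that is a purely formal substitution via Proposition~\ref{fundamental_prop}, with no further computation.
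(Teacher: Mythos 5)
Your proposal is correct and follows exactly the route the paper intends: it states Corollary \ref{cor_hypersurface} as an immediate combination of Lemma \ref{lem_first}, Theorem \ref{thm_hypersurface} and Proposition \ref{fundamental_prop}, and you have simply made explicit the standard ``very general'' argument (countably many feasible $\alpha$, each $F_\alpha$ and $F(A)$ containing a dense open subset of the irreducible $(\mathbb{C}^\ast)^N$) together with the translation $\mld(0;X)=\lambda(0)+n$. No gaps; this is the same proof, spelled out in slightly more detail than the paper gives.
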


\subsection{Examples}
$\\$
There are many interesting examples of hypersurfaces where the inequality in Theorem \ref{thm_hypersurface} turns out to be an equality. According to Theorem \ref{thm_hypersurface}, we just need to show that the coefficients are in $\bigcap_\alpha F_\alpha\cap F(A)$, and that $\V(P_0)\backslash \V(T_0)\neq \emptyset$ for certain feasible $\alpha$. In these cases, the invariants $\lambda$ and Mather mld are independent of the coefficients in the defining equations.

\begin{exmp}
\label{exmp_binomial}
Let $X=\mathbb{V}(f)\subset \mathbb{A}^{n+1}$ be an integral variety of dimension $n$ where $f$ is a binomial. Note that $X$ is not necessarily normal. So it might not be a toric variety. The irreducibility of $X$ implies that we can write $f$ in the form
\begin{equation*}
f=ax_1^{\beta_1}x_2^{\beta_2}\cdots x_p^{\beta_p}-bx_{p+1}^{\beta_{p+1}}x_{p+2}^{\beta_{p+2}}\cdots x_{p+q}^{\beta_{p+q}},
\end{equation*}
where $p+q\leq n+1$. If $p+q<n+1$, $X$ is the product of a lower dimensional binomial hypersurface with an affine space. The question is hence reduced to the case when $p+q=n+1$. By assuming $0\in X$, we also require that $p\geq q\geq 1$. The support $A$ contains $N=2$ elements $(\beta_1,\beta_2,\ldots, \beta_p, 0,\ldots, 0)$ and $(0,\ldots, 0,\beta_{p+1},\ldots, \beta_{p+q})$. By requiring that $A$ is integral (see Definition \ref{defn_integral}), we further assume that the line segment connecting these two points does not contain any other integral point. Hence $X$ is integral if the coefficients $(a,b)\in F(A)$ according to Corollary \ref{cor_integral}. On the other hand, by applying a coordinate change that takes $x_1$ to $c\cdot x_1$ and preserves all $x_2,\ldots, x_{p+q}$, we see that any two such hypersurfaces $X$ and $X'$, with different coefficients $(a,b)$ and $(a',b')$, are isomorphic. Hence, we conclude that $F(A)=(\mathbb{C}^\ast)^2$. Clearly, an $(n+1)$-tuple $\alpha\in (\mathbb{Z}_+)^{n+1}$ is feasible (see Remark \ref{rem_feasible}) if and only if
\begin{equation}
\label{eqn_13}
\sum_{i=1}^p \alpha_i\beta_i=\sum_{i=p+1}^{p+q} \alpha_i\beta_i.
\end{equation}
For any feasible $\alpha$, following the notation in the previous section, we have $n_0(\alpha)=\sum_{i=1}^p \alpha_i\beta_i$ and
\begin{equation*}
P_0(x_1^{({\alpha_1})},x_2^{({\alpha_2})},\ldots,x_{n+1}^{({\alpha_{n+1}})}) =f(x_1^{({\alpha_1})},x_2^{({\alpha_2})},\ldots,x_{n+1}^{({\alpha_{n+1}})})
\end{equation*}
is of weight $n_0(\alpha)$. Since $\frac{\partial P_0}{\partial x_j^{(\alpha_j)}}$ is a monomial for each $j$, $F_\alpha=(\mathbb{C}^\ast)^{2}$ for each feasible $\alpha$.

Now fix a feasible $\alpha$. Clearly if $\alpha_{j_0}=\underset{1\leq j\leq n+1}{\max}\{\alpha_j\}$, then we get
\begin{equation*}
T_0(x_1^{({\alpha_1})},x_2^{({\alpha_2})},\ldots,x_{n+1}^{({\alpha_{n+1}})}) =\frac{\partial f(x_1^{({\alpha_1})},x_2^{({\alpha_2})},\ldots,x_{n+1}^{({\alpha_{n+1}})})} {\partial x_{j_0}^{(\alpha_{j_0})}}.
\end{equation*}
In particular, $P_0$ is binomial while $T_0$ is a monomial. Hence $\V(P_0)\backslash \V (T_0)\neq \emptyset$ in the torus $(\mathbb{C}^\ast)^{n+1}$. According to Remark \ref{rem_equality}, for each feasible $\alpha$, we have
\begin{eqnarray*}
\dim (C^m_\alpha)
&=& mn-\sum_{j=1}^{n+1} (\alpha_j-1)-1+ \min_{1\leq i\leq N} \{I^i\cdot \alpha \}-\underset{1\leq j\leq n+1\textnormal{ with }  I_j^i>0}{\min_{1\leq i\leq N}} \{I^i\cdot \alpha-\alpha_j \}\\
&=& mn -\sum_{j=1}^{n+1} (\alpha_j-1) +\underset{1\leq i\leq n+1}{\max}\{\alpha_i-1\}.
\end{eqnarray*}
Therefore, Theorem \ref{thm_hypersurface} implies that
\begin{equation}
\label{eqn_binomial}
\lambda=\min\{\sum_{i=1}^{n+1}\alpha_i -\underset{1\leq i\leq n+1}{\max}\alpha_i-n\},
\end{equation}
or equivalently,
\begin{equation}
\mld(0;X)=\min\{\sum_{i=1}^{n+1}\alpha_i -\underset{1\leq i\leq n+1}{\max}\alpha_i\},
\end{equation}
where the first minimum is taken over all $\alpha\in (\mathbb{Z}_+)^{n+1}$ that satisfy equation (\ref{eqn_13}).
\end{exmp}

\begin{exmp}
Consider the Whitney Umbrella $X=\mathbb{V}(x^2-y^2z)$. The nonsingular locus of $X$ has codimension $1$. Therefore it does not follow in the framework discussed in Section \ref{sec4}, since it is not normal. Nevertheless, we can use the formula (\ref{eqn_binomial}) and conclude that $\lambda=1$ and $\mld(0;X)=3$.
\end{exmp}

\begin{rem}
The binomial hypersurfaces are nice examples where $\lambda$ and Mather mld can be computed directly in a simple form. Note that the result is independent of coefficients $a$ and $b$. This makes sense because we have seen that any two binomial polynomials with the same support define isomorphic hypersurfaces. However, this is not the case if $f$ is more complicated, and then $\lambda$ indeed depends on the coefficients.
\end{rem}

\begin{exmp}
Let $X$ be a curve in $\mathbb{A}^2$ defined by $f=a_1 x^2 +a_2 y^2 +a_3 xy +a_4 y^3$. For a very general choice of coefficients $a_i$, $\lambda$ has a lower bound given by equation (\ref{eqn_hypersurface}). The lower bound is $0$, which is achieved when $\alpha_1=\alpha_2=1$.

First, assume all $a_i$ are equal to $1$. Then $X$ is integral. For any choice of feasible $\alpha$ (see Remark \ref{rem_feasible}), it's clear that $P_0(x,y)$ can only be $x^2+y^2+xy$. Thus the condition $\Delta^\alpha$ (see Condition \ref{open_condition}) is satisfied, or equivalently, we have $(1,1,1,1)\in\cap_\alpha F_\alpha$. Since $T_0(x,y)=2x$ or $2y$, we get $\V(P_0)\backslash \V(T_0)\neq \emptyset$ in the torus $(\mathbb{C}^\ast)^4$. By Theorem \ref{thm_hypersurface}, we conclude that $\lambda=0$, and the minimum in equation (\ref{eqn_hypersurface}) is attained at the tuple $\alpha$ with $\alpha_1=\alpha_2=1$.

Now instead we assume that $a_1=a_2=a_4=1$ and $a_3=2$. $X$ is still integral. But condition $\Delta^\alpha$ is no longer satisfied. By computing $\dim(\psi_m(\pi^{-1}(0)))$ directly from definition, it can be shown that $\lambda=1$.
\end{exmp}

\begin{exmp}
\label{exmp_2}
Let $X\subset \mathbb{A}^{n+1}$ be a hypersurface defined by $f=\sum_{i=1}^{n+1} x_i^{b_i}$, with $n\geq 2$. As Lemma \ref{lem_second} suggests, we consider only feasible $(n+1)$-tuples $\alpha$ (see Remark \ref{rem_feasible}). Clearly, $\alpha$ is feasible if and only if $\underset{1\leq i\leq n+1}{\min} \{ b_i\alpha_i\}$ is attained by at least two different $i$'s.

Note that for any feasible $\alpha$, $\frac{\partial P_0}{\partial x_j^{(\alpha_j)}}$ is always a monomial for each $j$. Thus, we have $\cap_{\textnormal{feasible }\alpha} F_\alpha=(\mathbb{C}^\ast)^{n+1}$. Clearly, when $n\geq 2$, $X$ is an integral hypersurface.

Similarly, $T_0$ is always a monomial for any feasible $\alpha$. So we conclude $\mathbb{V}(P_0)\backslash \mathbb{V}(T_0)\neq \emptyset$. According to Theorem \ref{thm_hypersurface}, we get
\begin{eqnarray}
\label{eqn_15}
\lambda &=& \min\{ \sum_{i=1}^{n+1} (\alpha_i-1) +1- \underset{1\leq i\leq n+1}{\min} \{ b_i\alpha_i\} + \underset{1\leq i\leq n+1}{\min} \{(b_i-1)\alpha_i\}\}\textnormal{, or}
\end{eqnarray}
\begin{eqnarray*}
\mld(0;X) &=& \min\{ \sum_{i=1}^{n+1} (\alpha_i-1) +1- \underset{1\leq i\leq n+1}{\min} \{ b_i\alpha_i\} + \underset{1\leq i\leq n+1}{\min} \{(b_i-1)\alpha_i\}\}+n,
\end{eqnarray*}
where the first minimum is taken over all feasible $\alpha$.
\end{exmp}

There are many classical examples that fall into the category of Example \ref{exmp_2}. A large portion of the following class of examples are of this type.

\begin{exmp}
Consider here the ADE singularities. All the varieties here are integral.

\item (1) Singularities of type $A_k$: $X$ is defined by $f=x_1^{k+1}+x_2^2+\cdots x_n^2$ for $n\geq 3$.

    Choose multi-index $\alpha$ with $\alpha_i=1$ for $1\leq i\leq n$. The minimum weight $n_0(\alpha)$ is attained by $n-1$ monomials if $k>1$, or $n$ monomials if $k=1$. In both cases, $\alpha$ is feasible. Let $b_1=k+1$ and $b_i=2$ for $2\leq i\leq n$. Then we have
    \begin{equation*}
    \sum_{i=1}^{n+1} (\alpha_i-1) +1- \underset{1\leq i\leq n+1}{\min} \{ b_i\alpha_i\} + \underset{1\leq i\leq n+1}{\min} \{(b_i-1)\alpha_i\}=0.
    \end{equation*}
Hence according to equation (\ref{eqn_15}), we get $\lambda=0$ or $\mld(0;X)=n-1$.

\item (2) Singularities of type $D_k$: $X$ is defined by $f=x_1^{k-1}+x_1x_2^2 +x_3^2+\cdots + x_n^2$ with $k\geq 4$. One checks easily that the coefficients are in $\cap_\alpha F_\alpha$.

    If $n\geq 4$, then there are at least two quadratic terms. Hence $\alpha=(1,\ldots,1)$ is feasible, which achieves the minimum $0$ in equation (\ref{eqn_hypersurface}). Note that we have
\begin{equation*}
P_0=(x_3^{(1)})^2+\ldots+(x_n^{(1)})^2
\end{equation*}
and $T_0=\frac{\partial P_0}{\partial x_3^{(1)}}=2x_3^{(1)}$. Therefore, $\mathbb{V}(P_0)\backslash \mathbb{V}(T_0)\neq \emptyset$. By Theorem \ref{thm_hypersurface}, we get $\lambda=0$ or $\mld(0;X)=n-1$.

    When $n=3$, the minimum $1$ of equation (\ref{eqn_hypersurface}) is achieved when $\alpha=(2,1,2)$. With similar analysis, we obtain $\lambda=1$ or $\mld(0;X)=n=3$.

\item (3) Singularities of type $E_6$: $X$ is defined by $f=x_1^4+x_2^3+x_3^2+\ldots+x_n^2$. This belongs to Example \ref{exmp_2}. So we use equation (\ref{eqn_15}).

    If $n\geq 4$, with $\alpha=(1,\ldots,1)$, we get $\lambda=0$ or $\mld(0;X)=n-1$. When $n=3$, the minimum is achieved when $\alpha=(1,2,2)$, and we get $\lambda=1$ or $\mld(0;X)=n=3$.

\item (4) Singularities of type $E_7$: $X$ is defined by $f=x_1^3x_2+x_2^3+x_3^2+\ldots+x_n^2$.

This is very similar to case (2). Again one checks easily that the coefficients satisfy Condition $\Delta^\alpha$ for all feasible multi-indices $\alpha$.

    If $n\geq 4$, $\alpha=(1,\ldots,1)$ is feasible. Similar to (2) we get $\lambda=0$ or $\mld(0;X)=n-1$.

    When $n=3$, $\alpha=(2,2,3)$ is feasible and it gives minimum in equation (\ref{eqn_hypersurface}). Simple analysis similar to the ones above shows that we have an equality and hence $\lambda=2$ or $\mld(0;X)=n+1=4$.

\item (5) Singularities of type $E_8$: $X$ is defined by $f=x_1^5+x_2^3+x_3^2+\ldots+x_n^2$. This belongs to Example \ref{exmp_2} so we can apply formula (\ref{eqn_15}).

    When $n\geq 4$, we get $\lambda=0$ or $\mld(0;X)=n-1$. The minimum is attained when $\alpha=(1,\ldots,1)$. When $n=3$, we have $\lambda=2$ or $\mld(0;X)=n+1=4$, and it is attained when $\alpha=(2,2,3)$.
\end{exmp}

\subsection{Possible generalizations}
$\\$
We only treat the case when the hypersurface is defined by a very general polynomial with a fixed support. An obvious question is: what can we say if the hypersurface is defined by a general polynomial (so that it is integral) with a fixed support? Unfortunately, the polynomials $P_0$ defined in equation (\ref{eqn_P0}) and $T_0$ defined in equation (\ref{eqn_16}) don't behave well and our method fails.

An obvious generalization of the results in this section is to treat the class of complete intersection varieties. However, our method doesn't work well when there are multiple defining equations.

%%%%%%%%%%%%%%%%%%%%%%%%%%%%%%%%%%%%%%%%%%%%
%\bibliography{Biblio}

\end{document}